\documentclass[11pt]{amsart}
\usepackage{mathrsfs, amsmath, amssymb, graphics,booktabs,diagbox,color,textcomp,picinpar,wrapfig,cutwin,longtable}
\usepackage{geometry}
\geometry{a4paper,scale=0.8}

\usepackage{qbordermatrix}

\definecolor{listinggray}{gray}{0.9}
\definecolor{lbcolor}{rgb}{0.9,0.9,0.9}
\usepackage{floatrow}
\floatsetup[table]{capposition=top}
\newfloatcommand{capbtabbox}{table}[][\FBwidth]

\pagestyle{plain} \setlength{\parindent}{12pt}
\setlength{\parskip}{5pt plus1pt minus2pt}
\setlength{\baselineskip}{10pt plus2pt minus1pt}
\setlength{\textheight}{24true cm} \setlength{\textwidth}{15.5cm}
\setlength{\topmargin}{0mm} \setlength{\columnsep}{5mm}
\setlength{\evensidemargin}{0.5mm} \setlength{\oddsidemargin}{0.5mm}
\setlength{\parindent}{2.2em}

\newtheorem{theorem}{Theorem}[section]
\newtheorem{lemma}[theorem]{Lemma}
\newtheorem{proposition}[theorem]{Proposition}
\newtheorem{question}[theorem]{Question}

\theoremstyle{definition}
\newtheorem{definition}[theorem]{Definition}
\newtheorem{example}[theorem]{Example}

\theoremstyle{remark}
\newtheorem{remark}[theorem]{Remark}

\numberwithin{equation}{section}

\begin{document}

\title{Geometrically  bounding 3-manifold, volume and Betti number}

\author{Jiming Ma}
\address{School of Mathematical Sciences \\Fudan University\\
	Shanghai 200433, China} \email{majiming@fudan.edu.cn}

\author{Fangting Zheng}

\address{School of Mathematical Sciences, Fudan University, Shanghai 200433,
	China  }

\email{fzheng13@fudan.edu.cn}

\keywords{Hyperbolic 3-manifolds, geometrically bounding, hyperbolic 4-manifolds, small cover}

\subjclass[2010]{57R90, 57M50,  57S25}

\date{Nov. 19,  2017}

\thanks{Jiming Ma was partially supported by  NSFC  11371094 and 11771088.}

\begin{abstract}

It is well known that an arbitrary closed orientable $3$-manifold can be realized as the unique boundary of a compact orientable  $4$-manifold, that is, any closed orientable $3$-manifold is  cobordant to zero. In this paper, we consider the   geometric cobordism problem: a hyperbolic $3$-manifold is geometrically bounding if it is  the only boundary of a totally geodesic hyperbolic 4-manifold.  However, there are very rare geometrically bounding closed hyperbolic 3-manifolds according to the previous research \cite{longR:2000,MN:1992}. Let $v \approx 4.3062\ldots$ be the volume of the regular right-angled hyperbolic dodecahedron in $\mathbb{H}^{3}$, for each $n \in \mathbb{Z}_{+}$ and each odd integer $k$ in $[1,5n+3]$, we construct a closed hyperbolic 3-manifold $M$ with $\beta^1(M)=k$ and $vol(M)=16nv$ that bounds a totally geodesic hyperbolic 4-manifold. The proof uses small cover theory over a sequence of linearly-glued  dodecahedra and some results of Kolpakov-Martelli-Tschantz \cite{KMT:2015}.
\end{abstract}

\maketitle


\section{\textbf{Introduction}}

\subsection{\textbf{Geometrically  bounding 3-Manifolds}}

It is an open question what kind of closed $n$-manifolds can bound  $(n+1)$-manifolds. Farrell and Zdravkovska once conjectured that every almost flat $n$-manifold bounds a $(n+1)$-manifold,  for example, see \cite{fz:1983, df:2016}.

There is a well-known result given by Rohlin in 1951 that $\Omega_3 =0$, which means every closed orientable 3-manifold $M$ bounds a compact orientable 4-manifold (for example, see Corollary 2.5 of \cite {Saveliev:1977}). Farrell and Zdravkovska once conjectured \cite{fz:1983} that every flat $n$-manifold $M$ is the cusp section of a one-cusped hyperbolic $(n+1)$-manifold. However, Long-Reid \cite{longR:2000} gave a negative answer to this stronger conjecture by showing that if $M$ is the cusp section of a one-cusped hyperbolic $4n$-manifold, then the $\eta$-invariant of $M$ must be an integer. 

Long-Reid also further studied what kind of 3-manifolds bound geometrically \cite{longR:2000}. If a hyperbolic $n$-manifold $M$ is the unique totally geodesic boundary of a hyperbolic $(n+1)$-manifold $N$, then we say $M$ \emph{bounds geometrically} or $M$ is a \emph{geometrically bounding} hyperbolic $n$-manifold. See also Ratcliffe-Tschantz \cite{RT:1998} for cosmological motivations of studying geometrically bounding hyperbolic 3-manifolds. 
Geometrically bounding 3-manifolds are difficult to seek since there are very few examples of hyperbolic 4-manifolds. Moreover, Long-Reid showed \cite{longR:2000} that if a hyperbolic closed 3-manifold $M$ is geometrically bounding, then the $\eta$-invariant $\eta(M) \in  \mathbb{Z}$. By Theorem 1.3 of Meyerhoff-Neumann \cite{MN:1992}, the set of $\eta$-invariants of all hyperbolic 3-manifolds is dense in $\mathbb{R}$. So, to some extend, geometrically bounding 3-manifolds are very rare in the set of hyperbolic 3-manifolds. As far as we know,  the following question is wide open:

\begin{question}
 For a  hyperbolic closed $3$-manifold $M$ with $\eta$-invariant $\eta(M) \in  \mathbb{Z}$, is there a hyperbolic totally geodesic $4$-manifold $N$ such that  $\partial N = M$?
\end{question}

It is well known from Jorgensen-Thurston's Dehn surgery  theory \cite{Thurston:1978} that, given  $x \in \mathbb{R}_{+}$,  there are only finitely many (maybe no) hyperbolic 3-manifolds with the given volume $x$. The best result in this direction is given by Millichap \cite{ Millichap:2015}. More precisely, if considering the function
\begin{center}
$f(x)$=sup$\{n|$ there are $n$ different hyperbolic 3-manifolds  with the same volume $v \leq x\}$,
\end{center}
then Jorgensen-Thurston theory implies $f(x)$ is finite and Millichap \cite{ Millichap:2015} showed $f(x)$  grows at least factorially.

In this paper, we consider how many geometrically bounding 3-manifolds are there with the same volume. That is, we are focusing on the function
\begin{center}
$f_{b}(x)$=sup$ \{n|$   there are $n$ different geometrically bounding  3-manifolds  with the same volume $v \leq x\}$.
\end{center}

Building on \cite{KMT:2015} and small cover theory, we have:

\begin{theorem} \label{theorem: bounding} Let $v \approx 4.3062\ldots$ be the volume of the regular right-angled hyperbolic dodecahedron in $\mathbb{H}^{3}$, for each $n \in \mathbb{Z}_{+}$ and each odd integer $k$ in $[1,5n+3]$, there is a closed hyperbolic $3$-manifold $M$ with $\beta^1(M)=k$ and $vol(M)=16nv$ that bounds a totally geodesic hyperbolic 4-manifold.
\end{theorem}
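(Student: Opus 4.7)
The plan is to leverage the small-cover machinery for right-angled polytopes together with the 4-dimensional bounding construction of Kolpakov--Martelli--Tschantz~\cite{KMT:2015}. Let $L_n$ denote the 3-dimensional right-angled hyperbolic polyhedron obtained by gluing $n$ regular dodecahedra face-to-face along pentagons in a linear chain. A $\mathbb{Z}_2^4$-valued characteristic function $\lambda$ on the pentagonal facets of $L_n$, subject to a non-degeneracy condition at every vertex, determines a closed hyperbolic 3-manifold $M_\lambda$ of volume $2^4 \cdot nv = 16nv$ via the standard small-cover construction. Under additional constraints extracted from \cite{KMT:2015}, such a $\lambda$ extends to an analogous characteristic function on a corresponding linear chain of right-angled 120-cells in $\mathbb{H}^4$, so that $M_\lambda$ arises as the totally geodesic boundary of the resulting hyperbolic 4-manifold.

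Next, I would exhibit at least $\lceil \frac{5n+3}{2} \rceil$ characteristic colorings $\lambda_1, \dots, \lambda_k$ on $L_n$ satisfying the above constraints. The natural strategy is to begin from a distinguished reference coloring (essentially the symmetric one used in~\cite{KMT:2015}) and produce new ones by toggling the $\mathbb{Z}_2^4$-labels on selected pentagons dodecahedron by dodecahedron. The linear growth rate in the count $\frac{5n+3}{2}$ strongly suggests that each dodecahedron in the chain contributes a bounded number of independent ``toggle slots,'' with the two endpoint dodecahedra behaving slightly differently from the interior ones; this explains the ceiling. At each step one must verify that the modified coloring still satisfies both the non-degeneracy condition and the KMT bounding criterion.

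The final step is to distinguish the manifolds $M_{\lambda_i}$ as pairwise non-homeomorphic (hence non-isometric, by Mostow rigidity). The cleanest invariant in this setting is the mod-2 cohomology ring, given by the Davis--Januszkiewicz-type presentation
\[
H^*(M_\lambda; \mathbb{Z}_2) \;\cong\; \mathbb{Z}_2[x_F : F \text{ a facet of } L_n]/(I_{L_n} + J_\lambda),
\]
where $I_{L_n}$ is the Stanley--Reisner ideal of the face nerve of $L_n$ and $J_\lambda$ is the linear ideal determined by $\lambda$. The plan is to read off the first $\mathbb{Z}_2$-Betti number $h^1(M_{\lambda_i}; \mathbb{Z}_2)$ directly from this presentation and to verify that it takes sufficiently many distinct values across the chosen family, using higher Betti numbers or the cup-product structure as a backup to separate any accidental coincidences.

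The principal obstacle I expect is in coordinating the last two steps: ensuring that the family of $\lceil \frac{5n+3}{2} \rceil$ colorings (a)~all satisfy the KMT bounding condition and (b)~yield pairwise distinct cohomological invariants. This will require a careful combinatorial analysis of how $J_\lambda$ varies under each toggle, and a tight understanding of the nerve of $L_n$ and its interaction with the 4-dimensional chain of 120-cells that provides the geometric bounding.
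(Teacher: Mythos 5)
Your overall architecture matches the paper's: color the linear chain $nP$ of right-angled dodecahedra, obtain closed hyperbolic $3$-manifolds of volume $2^4\cdot nv=16nv$, use the Kolpakov--Martelli--Tschantz extension to a chain of $120$-cells to get the totally geodesic bounding, and separate the manifolds by first Betti numbers. However, two of your steps have genuine gaps. First, your distinguishing invariant does not work as stated. The presentation $H^*(M_\lambda;\mathbb{Z}_2)\cong\mathbb{Z}_2[x_F]/(I_{L_n}+J_\lambda)$ is the Davis--Januszkiewicz theorem for \emph{small covers}, i.e.\ $\mathbb{Z}_2^3$-colorings of a $3$-polytope; for those, the mod-$2$ Betti numbers equal the $h$-vector of $nP$ and are therefore identical for every coloring, so they cannot separate anything. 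Your manifolds, built from $\mathbb{Z}_2^4$-valued characteristic functions on a $3$-polytope, are not small covers, and the quoted presentation is not valid for them; moreover the decomposition of cohomology into reduced cohomology of full subcomplexes $K_\omega$ (Suciu--Trevisan, Choi--Park) that one actually needs is a statement with $2$ invertible in the coefficients. The paper works with \emph{rational} first Betti numbers computed by exactly that decomposition, summed over the $15$ nonzero vectors of the row space of the $4\times(5n+7)$ characteristic matrix, and this choice is essential. Relatedly, the paper pins down the bounding condition precisely: one starts from a \emph{non-orientable} $\mathbb{Z}_2^3$-coloring $\lambda$ (detected by the Nakayama--Nishimura criterion), passes to its unique orientable $\mathbb{Z}_2^4$-extension $\delta$, so that $M(nP,\delta)$ is the orientable double cover of $M(nP,\lambda)$, and then invokes KMT Proposition 2.9 (or Martelli's Corollary 8, using the free orientation-reversing involution). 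Your ``additional constraints extracted from KMT'' leaves this mechanism unspecified, and note also that the coloring must be defined on all facets of $nP$ (twelve pentagons and $5n-5$ hexagons), not only on pentagons.

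Second, the heart of the theorem --- actually producing $\lceil\frac{5n+3}{2}\rceil$ admissible colorings whose double covers have pairwise distinct first Betti numbers --- is left in your proposal as a plan (``toggle labels, verify''). In the paper this is Theorem 1.4 and occupies Sections 3--5: one exhibits explicit pairwise-compatible ``coloring bricks'' on the pentagonal/hexagonal layers, passes from $nP$ to $(n+2)P$ by repeating the last compatible pair of bricks, and proves (via the regular growth of the adjacency matrices and the CT-tree bookkeeping of connected components of the subcomplexes $K_\omega$) that the resulting first Betti numbers form arithmetic progressions; assembling several such families realizes every odd value $k\in[1,5n+3]$, with separate treatments for $n$ even and $n$ odd. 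Your heuristic that each dodecahedron contributes boundedly many ``toggle slots'' is consistent with this linear count, but without an explicit family of colorings, a verified bounding/non-orientability condition for each, and a computation showing the invariants take $\lceil\frac{5n+3}{2}\rceil$ distinct values, the argument is not yet a proof.
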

By this theorem,  for each $n \in \mathbb{Z}_{+}$,there are at least $\lceil\frac{5n+3}{2}\rceil$ many geometrically bounding 3-manifolds of the same volume $16nv$. Namely $f_{b}(x)$ defined above grows at least linearly. And we believe the growth curve ought to be much steeper.

See Ratcliffe-Tschantz \cite{RT:2000}  for  counting questions on the number of totally geodesic hyperbolic 4-manifolds with the same 3-manifold boundary $M$, and  Slavich \cite {Slavich:2015, Slavich:2015b} for other  topics on geometrically bounding 3-manifolds. See also the recent paper \cite{KRS:2017}  by Kolpakov-Reid-Slavich for geodesically embedding questions about hyperbolic manifolds and the relation between the geodesically embedding and geometrically bounding is subtle.

\subsection{\textbf{Small covers}}
Small covers, or Coxeter orbifolds, were studied by Davis and Januszkiewicz in \cite{dj:1991}, see also \cite{Vesnin:1987}.  They are a class of  $n$-manifolds  which  admit  locally standard $\mathbb{Z}_2^{n}$-actions, such that the orbit spaces are  $n$-dimensional simple polytopes. The algebraic and topological properties of a small cover are closely related to the combinatorics of the orbit polytope  and the coloring on the boundary of that polytope.  For example, the $mod~2$ Betti number $\beta_{i}^{(2)}$ of a small cover $M$ over the polytope $L$ agrees with $h_{i}$, where $h=(h_{0}, h_{1},\ldots, h_{n})$ is the $h$-vector of the polytope $L$ \cite{dj:1991}.

Those manifolds admitting locally standard $\mathbb{Z}_2^{k}$-actions would form a class wider  than small covers. We will say more about this topic in Section 2, and here we just give the definition for the convenience of stating Theorem \ref{theorem: betti}.

\begin{definition} Let $L$ be an $n$-dimensional simple polytope, $\mathcal{F}$ be the set of co-dimensional one faces of $L$. Such faces are called as facets. A \emph{$\mathbb{Z}_2^k$-coloring} is a map $\lambda:\mathcal{F} \longrightarrow \mathbb{Z}_2^k$ satisfying $\lambda(F_1)$, $\lambda(F_2),\ldots,\lambda(F_n)$ generate a subgroup of $\mathbb{Z}_2^k$ which is isomorphic to $\mathbb{Z}_2^n$,
 when the facets $F_1$, $F_2, \cdots, F_n$  are sharing a common vertex.
 \end{definition}

Conversely, through Proposition 1.7 of \cite{dj:1991}, from a $\mathbb{Z}_2^k$-coloring $\lambda$ and a principal $\mathbb{Z}_2^k$-bundle over an $n$-dimensional simple polytope $L$, we can get a unique closed $n$-manifold $M$. In particular, we can use $2^k$ copies of $L$, namely $L\times\mathbb{Z}_2^k$, to construct a quotient space $M(L, \lambda)$ under the following equivalent relation:
\begin{equation}
(p,g_{1}) \sim(q,g_{2}) \Leftrightarrow
\begin{cases} p=q~~and~~g_{1}=g_{2} ,  \hspace{0.8cm} \text{if $p\in {\rm Int~}L$}\\
 p=q ~~and~~ g_{1}g^{-1}_{2} \in G_{f}, ~~\text{if $p\in \partial P$}.
\end{cases}
\end{equation}
Here for a face $f$ of the simple polytope $L$,  $G_{f}$ is the subgroup generated by $\lambda(F_{i_1}), \lambda(F_{i_2}), \ldots ,\lambda(F_{i_k})$, where $f=F_{i_1}\cap F_{i_2}\cap \ldots \cap F_{i_k}$ is the unique face that contains $p$ as an interior point and $F_{i_j}\in\mathcal{F}$.
It is easy to see $M(L, \lambda)$  is a closed $n$-manifold.


A simple example is that if we color the four co-dimensional one faces of a tetrahedron by $e_{1}$, $e_{2}$, $e_{3}$ and  $e_{1}+e_{2}+e_{3}$ respectively, where  $e_{1}$, $e_{2}$ and $e_{3}$ are the standard basis vectors of $\mathbb{Z}_2^3$. Then from the above construction, we can get the  closed orientable 3-manifold $\mathbb{RP}^{3}$. It should be noted that a tetrahedron admits a unique right-angled spherical structure, and these spherical structures on copies of the tetrahedron are glued together to build up the unique  spherical structure on $\mathbb{RP}^{3}$.  This point of view appeals in this paper.

In the following of this section, we suppose $P$ to be the regular right-angled hyperbolic dodecahedron in $\mathbb{H}^{3}$ with twelve 2-dimensional facets. We let $nP$ be the linearly-gluing of $n$ copies of  $P$.  It is obvious that $nP$ has $12$ pentagonal facets and $5n-5$ hexagonal facets. See Section 2.5 for more details.

\begin{definition} From a $\mathbb{Z}_2^3$-coloring $\lambda$ on the polytope $nP$, we obtain a natural $\mathbb{Z}_2^4$-coloring $\delta$ on  $nP$ by following manners: Supposing $\{e_1,~ e_2,~e_3,~e_4\}$ is the standard basis of $\mathbb{Z}_2^4$. For each facet $F$ of $nP$, if $\lambda (F)=\Sigma^{3}_{i=1} x_{i} e_{i}$, $x_{i}= 1~~ or~~0 $,  then we take  $\delta (F)=\Sigma^{4}_{i=1} x_{i} e_{i}$, where $x_{4}=1+\Sigma^{3}_{i=1} x_{i}$ \emph{mod} 2. A $\mathbb{Z}_2^3$-coloring $\lambda$ is called \emph{non-orientable} when  the 3-manifold $M(nP, \lambda)$ is non-orientable. Furthermore, if the 3-manifold $M(nP, \lambda)$ is non-orientable, then its natural  $\mathbb{Z}_2^4$-coloring $\delta$ is called an \emph{admissible extension} of $\lambda$ or a \emph{natural $\mathbb{Z}_2^4$-coloring associated to $\lambda$} (say a \emph{natural $\mathbb{Z}_2^4$-extension} of $\lambda$ for short). It can be shown that $M(nP, \delta)$ is the orientable double cover of $M(nP, \lambda)$ when $\lambda$ is non-orientable.
	\end{definition}
The following is our main technical theorem.
\begin{theorem} \label{theorem: betti} For each $n \in \mathbb{Z}_{+}$ and each odd integer $k \in [1,5n+3]$, there is a non-orientable $\mathbb{Z}_2^3$-coloring $\lambda$ on the polytope $nP$, such that the first Betti number of the orientable $3$-manifold $M(nP, \delta)$ is $k$, where $\delta$  is the natural  $\mathbb{Z}_2^4$-extension of $\lambda$.
\end{theorem}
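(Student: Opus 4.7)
The plan is to first reduce the theorem to a question about the rational first Betti number of the non-orientable small cover $M(nP,\lambda)$, and then construct colorings inductively on $n$.

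Since $M(nP,\delta)$ is the orientable double cover of the non-orientable $M(nP,\lambda)$, I would invoke the transfer splitting
\[
H^*(M(nP,\delta);\mathbb{Q}) \;\cong\; H^*(M(nP,\lambda);\mathbb{Q}) \oplus H^*(M(nP,\lambda);\mathbb{Q}^{t}),
\]
where $\mathbb{Q}^{t}$ is the orientation-twisted local system. Twisted Poincar\'e duality gives $\beta_1(M(nP,\lambda);\mathbb{Q}^{t}) = \beta_2(M(nP,\lambda);\mathbb{Q})$, and the Euler characteristic identity for closed non-orientable $3$-manifolds ($\chi = 0$, $\beta_3 = 0$) forces $\beta_2 = \beta_1 - 1$. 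Combining these yields
\[
\beta_1(M(nP,\delta)) \;=\; 2\,\beta_1(M(nP,\lambda);\mathbb{Q}) - 1,
\]
which is automatically odd and explains the parity constraint on $k$. The theorem thus reduces to producing, for each integer $m\in\{1,2,\ldots,\lceil(5n+3)/2\rceil\}$, a non-orientable $\mathbb{Z}_2^3$-coloring $\lambda$ on $nP$ with $\beta_1(M(nP,\lambda);\mathbb{Q}) = m$. To make this computable, I would use a Choi--Park-type decomposition for the rational cohomology of a small cover, which in view of \cite{dj:1991} and the refinements used in \cite{KMT:2015} expresses $\widetilde{H}^{*}(M(nP,\lambda);\mathbb{Q})$ as a direct sum of reduced cohomologies of subcomplexes $L_\omega\subset\partial(nP)$ built from $\lambda$, indexed by the nontrivial hyperplanes $\omega\subset\mathbb{Z}_2^3$; in degree one, $\beta_1(M(nP,\lambda);\mathbb{Q})$ amounts to counting the excess connected components among the various $L_\omega$.

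The construction of colorings proceeds by induction on $n$. For the base case $n=1$, I would exhibit four explicit non-orientable colorings of the twelve pentagonal facets of the dodecahedron $P$ attaining $\beta_1(M(P,\lambda);\mathbb{Q})\in\{1,2,3,4\}$ (equivalently $k\in\{1,3,5,7\}$), each checked to satisfy the vertex-independence condition and to have image spanning all of $\mathbb{Z}_2^3$. For the inductive step, given a non-orientable coloring of $nP$ realizing $m$, I would attach a new dodecahedron $P'$: its top facet is absorbed into the interior, its five upper-ring pentagons are forced to agree with the existing colors via the hexagonal fusion, and its bottom pentagon together with its lower ring (six facets in total) remain free. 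By varying these six free colors I would realize shifts of $0$, $1$, $2$, or $3$ in $m$; combined with the old range $\{1,\ldots,\lceil(5n+3)/2\rceil\}$ this covers $\{1,\ldots,\lceil(5(n+1)+3)/2\rceil\}$.

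The main obstacle is verifying that each of the shifts $\{0,1,2,3\}$ is actually attainable by some admissible choice on the six free facets, and that the new coloring remains non-orientable and valid at every vertex of the glued polytope. This demands a delicate combinatorial bookkeeping of how the subcomplexes $L_\omega$ evolve under attaching a dodecahedron, and is the place where the concrete small-cover methods of \cite{KMT:2015} are essential.
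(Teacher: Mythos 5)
Your reduction step is correct, and it is genuinely different from what the paper does: the transfer splitting plus twisted Poincar\'e duality and $\chi=0$ indeed give $\beta^1(M(nP,\delta))=2\beta^1(M(nP,\lambda);\mathbb{Q})-1$, which both explains the parity of $k$ and would let you work with the seven nonzero vectors of $row\Lambda$ instead of the fifteen of $row\Delta$ (one can check this identity against Example \ref{example:6}: the sub-sum over $row\Lambda$ there is $4$ and $2\cdot4-1=7$). The paper never makes this reduction; it computes $\beta^1(M(nP,\delta))$ directly from the $\mathbb{Z}_2^4$-coloring via Theorem \ref{theorem: ChoiP}. So far, so good, and this part is a tidy alternative.

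The gap is the inductive step, which is exactly the hard content of the theorem and is only asserted. You claim that, starting from an \emph{arbitrary} non-orientable coloring of $nP$ realizing $m$, one can color the six free facets of an attached dodecahedron so as to realize each shift in $\{0,1,2,3\}$, while keeping non-singularity at the new vertices and keeping the coloring non-orientable (note also that removing the glued facet deletes one color from the image, so non-orientability of the restriction is not automatic). Nothing in the proposal shows these shifts are attainable; the change in $\beta^1$ is governed by how the connected components of the subcomplexes $K_\omega$ evolve, and this depends on the forced colors on the upper ring, i.e.\ on the particular coloring you started from, not just on $m$. The paper avoids precisely this difficulty by never inducting one dodecahedron at a time: it builds explicit periodic colorings from compatible bricks, steps from $nP$ to $(n+2)P$ (so that the adjacency matrices repeat with period two and the facet-layer structure keeps its parity), proves the arithmetic-progression principle Proposition \ref{prop:1} for such repetitions, verifies the first terms by explicit CT-tree/computer computation, and needs about ten separate brick families (Lemmas \ref{lemma: key}, \ref{lemma: second}--\ref{lemma: fifth}, \ref{lemma: odd1}--\ref{lemma:odd5}) to sweep out all odd $k\in[1,5n+3]$, treating even and odd $n$ separately. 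To complete your route you would have to either prove the shift-realizability claim uniformly in the old coloring (which is doubtful as stated) or restrict, as the paper does, to specially structured colorings for which the effect of attaching blocks can be computed once and propagated.
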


As one orientable 3-manifold $M$ may double cover many non-orientable 3-manifolds, we must show the orientable 3-manifolds under consideration are not homeomorphic in order to prove Theorem \ref{theorem: bounding}. And here the first  Betti number is the classification index we adopt to determine the lower bound.

Now on one hand, based on Theorem \ref{theorem: betti}, for a given $n \in \mathbb{Z}_{+}$ and an odd integer $k\in [1, 5n+3]$, we can construct an orientable 3-manifold $M(nP,\delta)$ whose first  Betti number is exactly $k$. Moreover, we believe that the inverse side  is true as well. That means the  first Betti numbers of $M(nP,\delta)$ are definitely the odd integers in $[1,5n+3]$, where $\delta$ is the natural $\mathbb{Z}_2^4$-extension of $\lambda$ and $\lambda$ is among all possible non-orientable $\mathbb{Z}_2^3$-colorings over the polytope $nP$. The ``only if" part is only checked by programming so far and we haven't proof it precisely yet. So we here just list them together as a question:
\begin{question} \label{question: betti} Is $k \in \mathbb{Z}$ the first  Betti number of $M(nP, \delta)$, where $\delta$ is the natural $\mathbb{Z}_2^4$-coloring associated to a non-orientable $\mathbb{Z}_2^3$-coloring $\lambda$ on $nP$, if and only if $k$ is odd in $[1,5n+3]$?

\end{question}
\noindent\textbf{Proof of Theorem \ref{theorem: bounding}}\quad For a non-orientable $\mathbb{Z}_2^3$-coloring $\lambda$ on the polytope $nP$, there is a natural $\mathbb{Z}_2^4$-extension $\delta$ on $nP$. Both $M(nP, \delta)$ and $M(nP, \lambda)$ are 3-manifolds and $M(nP, \delta)$ is the orientable double cover of  $M(nP, \lambda)$. See Section 3, in particular,  Proposition \ref{remark:1} for more details.

Now there are two methods to show $M(nP, \delta)$ is geometrically bounding: firstly we may use Proposition 2.9 in  \cite{KMT:2015} to extend  the $\mathbb{Z}_2^4$-coloring $\delta$ on the 3-dimensional polytope $nP$ to a $\mathbb{Z}_2^5$-coloring $\varepsilon$  on the 4-dimensional polytope  $nE$. Here $nE$ is a 4-dimensional polytope obtained by linearly-gluing $n$  copies of the hyperbolic right-angled 120-cell $E$. Then $M(nE, \varepsilon)$ is an orientable hyperbolic 4-manifold in which $M(nP, \lambda)$ can be embedded. Secondly since $M(nP, \delta)$ is the orientable double cover of
$M(nP, \lambda)$, we can thus apply Corollary 8  of  \cite{Mar:2015} directly because $M(nP, \delta)$ admits  a fixed-point free orientation-reversing involution. Therefore $M(nE, \varepsilon)-M(nP,\lambda)$ is a totally geodesic hyperbolic 4-manifold with boundary $M(nP,\delta)$.

Now from Theorem \ref{theorem: betti}, Theorem \ref{theorem: bounding} follows.  $\square$


\textbf{Outline of the paper}: In Section 2, we give some preliminaries on algebraic theory  of small covers. In Section 3, we show Lemma \ref{lemma: key}, which is the key of our paper. In Section 4 and Section 5, we prove  Theorem \ref{theorem: betti} for $n$ is even and odd respectively.

\section{\textbf{Preliminaries}}
\subsection{Polyhedral product}

Let $K$ be an abstract simplicial complex with ground set $[m]:=\{1,2,\ldots,m\}$, so we have $\emptyset \in K \subset  2^{[m]}$. If $\sigma \in K$, then for all $\tau \subset \sigma$ we have $\tau \in K$. We associate $m$ pairs of topological spaces, $(X_i,A_i)_{i=1}^m$, to $K$.  Then
the corresponding \emph{polyhedral product} $(\underline{X},\underline{A})^K$is defined as 
$$(\underline{X},\underline{A})^K:=
\displaystyle\bigcup_{\sigma\in K}D(\sigma),$$
where $D(\sigma)=\prod_{i=1}^mY_i=\left\{
\begin{array}{rcl}X_i &\mbox{if}& i\in\sigma, \\
 A_i &\mbox{if}&otherwise.
 \end{array}\right.$

\noindent That is to say $(\underline{X},\underline{A})^K=\{x\in \prod_{i=1}^m X_i \arrowvert  \sigma_x \in K\}$, when $\sigma_x=\{i \arrowvert  x_i \in X_i \backslash A_i\}$ are defined for every $x=(x_i)_{i=1}^m\in \prod_{i=1}^m X_i $. If $(X_i, A_i)_{i\in[m]}$ are the same pair $(X,A)$, then $(\underline{X},\underline{A})^K$ is abbreviated  as $(X, A)^K$. Specially, $(D^1, S^0)^K$ is defined as a \emph{real moment-angle complex}, denoted by $\mathbb{R}\mathcal{Z}_K$. 

For example, let $K$ to be the 1-skeleton of the 2-simplex, namely an abstract simplicial complex $2^{[3]} \backslash \{1,~2,~3\}$, then we have
${(D^1,S^0)}^{K}=\mathbb{R}\mathcal{Z}_{K}=D^1\times D^1\times S^0\bigcup S^0\times D^1\times D^1\bigcup D^1\times S^0\times D^1=\partial(D^1\times D^1\times D^1)\cong S^2.$ 


By Davis \cite{Davis:2008} and L. Cai \cite{2licai:2013}, $\mathbb{R}\mathcal{Z}_K$ is a topological $n$-manifold if and only if  $K$  is a generalized homology $(n-1)$-sphere  $K$  with $\pi_1(\arrowvert K\arrowvert)=0$ when $n \neq 1, 2$.
Specially, assuming $L$ to be an $n$-dimensional simple polytope and $K$ is the dual of the boundary of $L$, then $\mathbb{R}\mathcal{Z}_K$ is definitely to be a topological $n$-manifold. Then for simple polytope, there is an equivalent but more practical way in describing the moment-angle manifold by using the language of coloring and conducting the re-construction procedure.

\subsection{Re-construction procedure} For an $n$-dimensional simple polytope $L$, let $\mathcal{F}(L)=\{F_1,F_2,\ldots,F_m\}$ be  the set of co-dimensional one faces of $L$. Taking $\{e_1, e_2,\ldots, e_m\}$ to be a basis in $\mathbb{Z}_2^m$. Then we define a \emph{$\mathbb{Z}_2^m$-coloring characteristic function} $$\lambda:\mathcal F(L)=\{F_1,F_2,\dots,F_m\}\longrightarrow \mathbb{Z}_2^m$$ by mapping $F_i$ to $e_i$. $\lambda$ is also named as a \emph{$\mathbb{Z}_2^m$-coloring} for short. Because the images of these facets are the basis elements, it is naturally satisfied that $\lambda(F_1),\lambda(F_2),\dots,\lambda(F_n)$ generate a  subgroup  of $\mathbb{Z}_2^m$, which is isomorphic to $\mathbb{Z}_2^n$, when the facets $F_{1}, F_{2}, \ldots, F_{n}$ share a common vertex.

Then we can construct $M(L,\lambda):=L\times \mathbb{Z}_2^m/\sim $ by the following equivalent relation:

\begin{center}
$(x,g_1)\sim (y,g_2){\Longleftrightarrow }
\left\{
\begin{array}{rcl}x=y ~~ and  ~~~ g_1=g_2  \hspace{0.7cm} &\mbox~~~~~{{\rm if}}& x\in\ {\rm Int~}L, \\
x=y ~~ and ~~~ g_1^{-1}g_2 \in G_f&\mbox ~~~~~{{\rm if}}& x \in \partial L,
\end{array}\right.
$
\end{center}
where $f=F_{i_1}\cap\dots\cap F_{i_{n-k}}$ is the unique co-dimensional $(n-k)$-face  that contains $x$ as an interior point, and $G_f$ is the subgroup generated by $\lambda(F_{i_1}), \lambda(F_{i_2}), \ldots, \lambda(F_{i_{n-k}})$. It is easy to proof that $M(L,\lambda)$ is exactly the real moment-angle complex $\mathbb{R}\mathcal{Z}_K$ over its dual $K=(\partial L)^{*}$. Hence we also denote the manifold by $M(K, \lambda)$. We use Example \ref{example:2} to  illustrate the homeomorphism between the two spaces according to the two definitions respectively. And by replacing the facet set $\mathcal{F}(L)$ with the vertice set of $K$, the characteristic function $\lambda$ can also be seen as being defined on the simplicial complex $K$.

\begin{example}\label{example:2} Defining a $\mathbb{Z}_2^3$-coloring characteristic function $\lambda$ on the 2-simplex as show in Figure \ref{figure:2example1}, namely the characteristic function is
\begin{center}
	$\lambda:\{\{a,b\},\{b,c\},\{a,c\}\}\rightarrow\{(1,0,0), (0,1,0), (0, 0, 1)\}$,
	
	$(a,b)\mapsto (1,0,0)$;	$(b,c)\mapsto (0,1,0)$;	$(a,c)\mapsto (0, 0, 1)$,
\end{center}
where $(1,0,0)=e_1$, $(0,1,0)=e_2$ and $(0,0,1)=e_3$ are the standard basis vectors of $\mathbb{Z}_{2}^{3}$.

\begin{figure}[H]
	\scalebox{0.5}[0.5]{\includegraphics {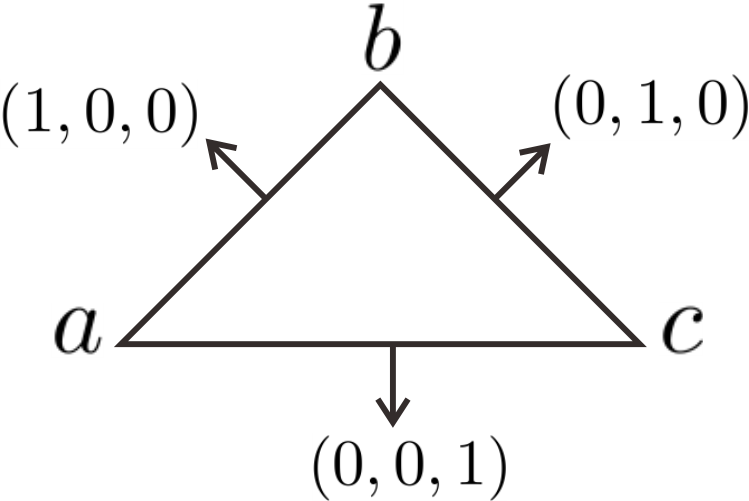}}
	\caption{Characteristic function of Example \ref{example:2}.} \label{figure:2example1}
\end{figure}

Now we will have eight polytopes, namely $\triangle^2\times \mathbb{Z}_2^3$, as shown in Figure \ref{figure: 2example2}.
\begin{figure}[H]
	\scalebox{0.8}[0.8]{\includegraphics {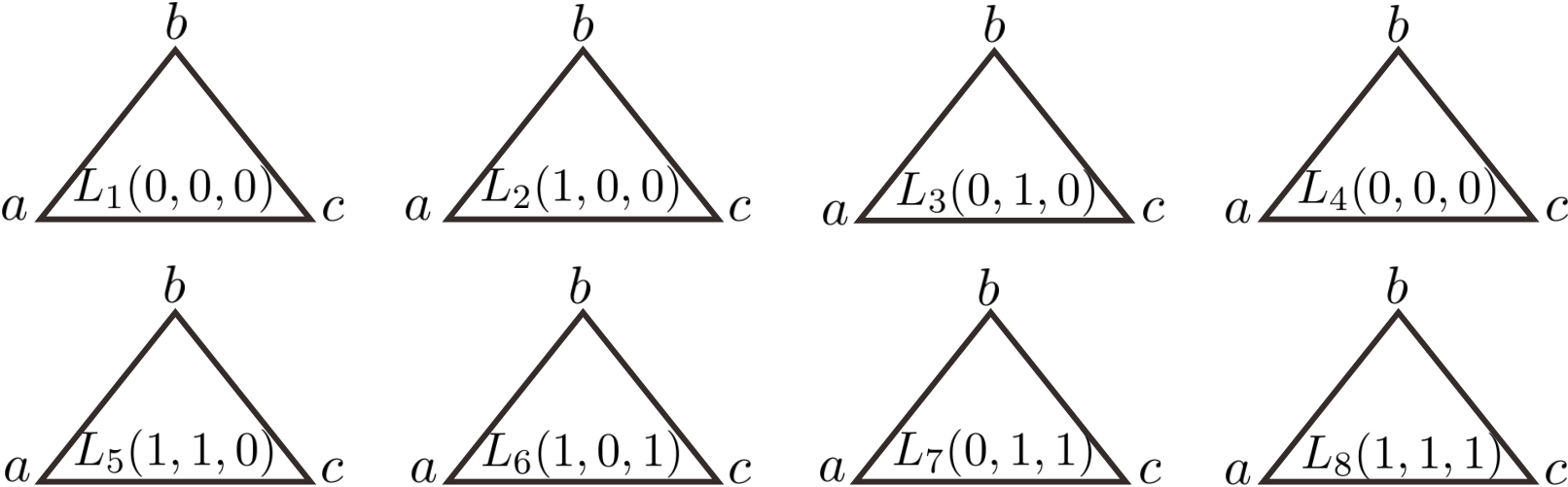}}
	\caption{$\triangle^2\times \mathbb{Z}_3^2$ of Example \ref{example:2}.}\label{figure: 2example2}
\end{figure}

If $p\in(a,b)$, $(p,g_1)\sim (q,g_2)${$\Longleftrightarrow $}
$\left\{
\begin{array}{rcl} p=q ~~{\rm (same\ location)}\\
g_1-g_2\in \{(1,0,0)\} \ \ \
\end{array}\right.
$, then gluing $L_1=\triangle \times (0,0,0)$ and $L_2=\triangle \times (1,0,0)$, $L_3=\triangle \times (0,1,0)$ and $L_5=\triangle \times (1,1,0)$, $L_4=\triangle \times (0,0,0)$ and $L_6=\triangle \times (1,0,1)$, $L_7=\triangle \times (0,1,1)$ and $L_8=\triangle \times (1,1,1)$ together respectively along the edge $(a,b)$ as shown in Figure  \ref{figure: 2example3}.
\begin{figure}[H]
	\scalebox{0.8}[0.8]{\includegraphics {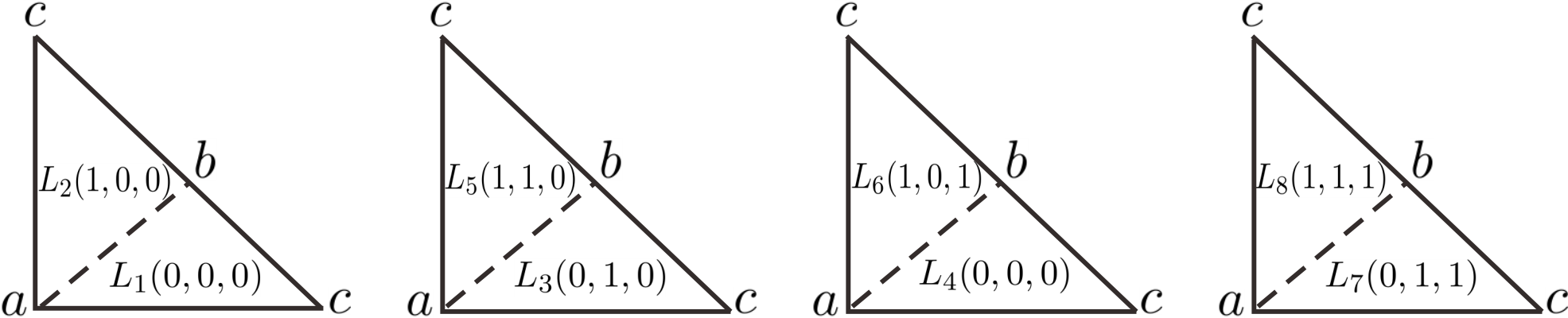}}
	\caption{The first gluing  of $L_i$.}\label{figure: 2example3}
\end{figure}

Similarly, if $p\in(b,c)$, $(p,g_1)\sim (q,g_2)${$\Longleftrightarrow $}
$\left\{
\begin{array}{rcl} p=q~~{\rm (same\ location)}\\
g_1-g_2\in \{(0,1,0)\} \ \ \
\end{array},\right.$
 gluing $L_1$ and $L_3$, $L_2$ and $L_5$, $L_4$ and $L_7$, $L_6$ and $L_8$ together respectively along the edge $(b,c)$ as shown in Figure \ref{figure: 2example4}.
\begin{figure}[H]
	\scalebox{0.8}[0.8]{\includegraphics {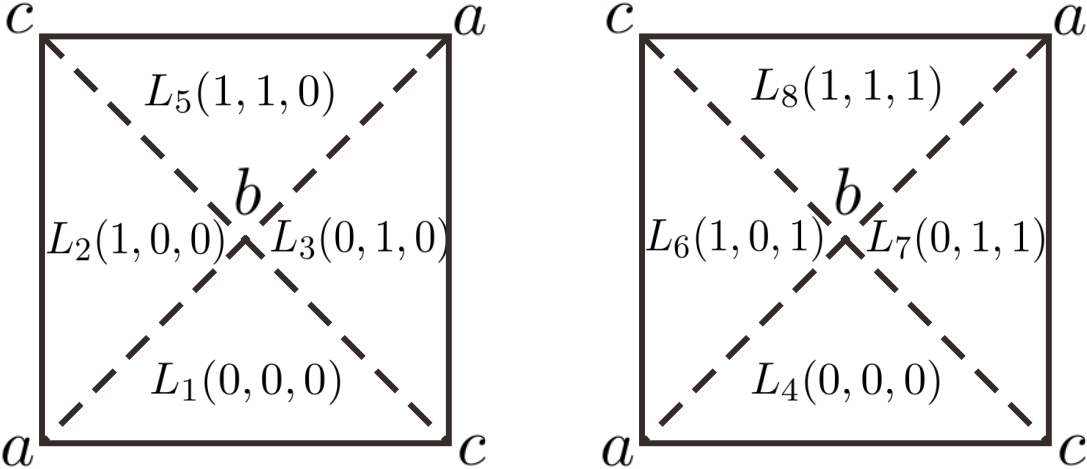}}		
	\caption{The second gluing of $L_i$.}\label{figure: 2example4}
\end{figure}

Finally, if $p\in(a,c)$, $(p,g_1)\sim (q,g_2)${$\Longleftrightarrow $}
$\left\{
\begin{array}{rcl} p=q~~{\rm (same\ location)}\\
g_1-g_2\in \{(0,0,1)\}  \ \ \
\end{array}\right.
$, $L_1$ and $L_4$, $L_2$ and $L_6$, $L_3$ and $L_7$, $L_5$ and $L_8$ are glued along the edge $(a,c)$ as shown in Figure \ref{figure: 2example5}. Now we additionally adopt a coordinate for a more precise description. Here  $L_4$, $L_6$, $L_2$, $L_1$ are the four frontier ones that lie in the quadrants clockwise with  non-negative $x$-coordinates. And $L_8$, $L_7$, $L_5$, $L_3$ are the four back polytopes lying clockwise in the quadrants with non-positive $x$-coordinates .
\begin{figure}[H]
	\scalebox{0.9}[0.9]{\includegraphics {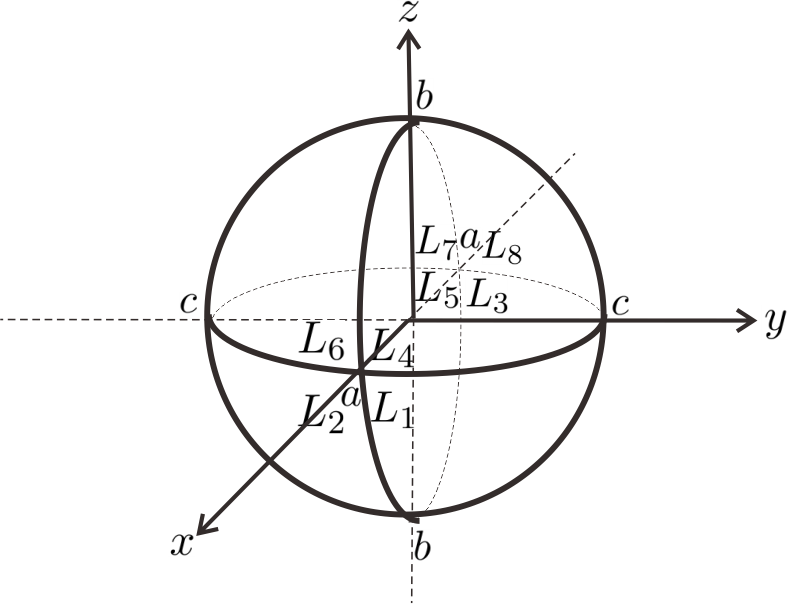}}
	\caption{The third gluing of $L_i$.}\label{figure: 2example5}
\end{figure}
Thus we have $M(\Delta^2,\lambda)\approx \mathbb{S}^2\approx\mathbb{R}\mathcal{Z}_K$, where $K$ is the 1-skeleton of 2-simplex.
\end{example}
\subsection{Buchstaber invariant and real toric manifolds} Let $L$ be an $n$-dimensional simple polytope. From the construction of the  real moment-angle complex $\mathbb{R}\mathcal{Z}_K$,  we can easily see that there is a natural $\mathbb{Z}_2^m$-action over $\mathbb{R}\mathcal{Z}_K$, where $m$ is the cardinal number of the facet set $\mathcal{F}(L)$. The maximal rank among subgroups of $\mathbb{Z}_2^m$ that acts freely on $\mathbb{R}\mathcal{Z}_K$ is called the \emph{Buchstaber invariant} and denoted by $S_\mathbb{R}(L)$. Now we use $H^r$ to represent a subgroup of $\mathbb{Z}_2^m$ that acts on $\mathbb{R}\mathcal{Z}_K$ freely, where $r$ is the rank of $H^r$ satisfying $0\leq r\leq S_\mathbb{R}(L)$. Then we can have a smooth closed manifold  $\mathbb{R}\mathcal{Z}_K/{H^r}$ by quotient. Those smooth closed manifolds obtained by quotienting free $\mathbb{Z}_2^r$-actions from the real moment-angle manifolds are called \emph{real toric manifolds}. If $S_\mathbb{R}(L)=m-n$, $\mathbb{R}\mathcal{Z}_{K}/H^{(m-n)}$ is named as a \emph{small cover}. If $L$ is a 3-dimensional simple polytope, by the Four Color Theorem, $S_\mathbb{R}(L)=m-3$. Namely small covers can always be realized over any
3-dimensional simple polytope.

\begin{table}[H]
	\begin{tabular}{lcl}
		\toprule
		$\mathbb{Z}_2^m/H^r\curvearrowright$& $\mathbb{R}\mathcal{Z}_K/H^r$ & \\
		 &$\downarrow$ &real toric manifolds ($0\leq r\leq S_\mathbb{R}(L)$)\\
		&$L$& \\
		\bottomrule
	\end{tabular}
\end{table}

Then we have a short exact sequence and can define the \emph{$\mathbb{Z}_2^{m-r}$-coloring characteristic function} $\lambda_{H^r}=q\circ\lambda\circ i_d^{-1}
$ through a commutative diagram as shown below, where $q$ is the quotient map and $i_d$ is  the identity.

\begin{table}[H]
	\begin{tabular}{ccccccccc}
	\toprule
	&&&& $\mathcal{F}(L)$ & $\xrightarrow{~~i_d~~}$ &  $\mathcal{F}(L)$ &&\\	
	&&&& $\downarrow$ $\lambda$ & $\curvearrowright$ & $\downarrow$ $\lambda_{H^r}$ &&\\
 0 & $\longrightarrow$ & $H^r$ & $\hookrightarrow$ & $\mathbb{Z}_2^m$ & $\xrightarrow{~~q~~}$ & $ \mathbb{Z}_2^{m-r}$ & $\longrightarrow$ &0\\
 	\bottomrule
	\end{tabular}
\end{table}

 The free action requirement ensures that the non-singularity condition always holds at every vertex. That means for each vertex $v=F_{i_1}\cap  F_{i_2}\cap\dots\cap F_{i_n}$, Span$\{\lambda_H (F_{i_1}), \lambda_H (F_{i_2}), \cdots,\lambda_H(F_{i_n})\}=\mathbb{Z}_2^{n}$. The re-construction procedure can be applied parallelly to all real toric manifolds. Thus there is a one-to-one  correspondence between  real toric manifolds $\{\mathbb{R} \mathcal{Z}_K/H^r~ \vert~ H^r < \mathbb{Z}_2^m ~ {\rm and~acts ~freely~on~} ~ \mathbb{R} \mathcal{Z}_K\}$ and the set of pairs of polytopes and characteristic functions $\{(L, \lambda_{H^r})\}$. We always use $M(L,\lambda_{H^r})$ to denote the corresponding real toric manifold.

\subsection{Algebraic topology of $\mathbb{R}\mathcal{Z}_K/H^r$}
In \cite{dj:1991}, Davis and Januszkiewicz have proven that the $\mathbb{Z}_2$-coefficient cohomology groups of a small cover depend only on the polytope and its characteristic function. In 2013, Li Cai gave  a method to calculate the $\mathbb{Z}$-coefficient cohomology groups of $\mathbb{R}\mathcal{Z}_K$ \cite{2licai:2013}. Based on the work of Cai and Suciu-Trevisanon's result on rational homology groups of real toric manifolds  \cite {Suciu:13, SuciuT:12}, Choi and Park then gave a formula of the cohomology groups of real toric manifolds \cite{ChoiP:2013}, which can also be viewed as a combinatorial version of Hochster Theorem \cite{Hochster:1977}.

Let $K$ be a simplicial complex on $[m]$. We have  a bijective map $\varphi:\mathbb{Z}_2^m\longrightarrow 2^{[m]}$ such that the $i$-th entry of $v\in \mathbb{Z}_2^m$ is nonzero if and only if $i\in \varphi(v)$, where $2^{[m]}$ denotes the power set of $[m]$. Let $\lambda$ be a $\mathbb{Z}_2^n$-coloring characteristic function, then  the binary matrix $\Lambda_{(n\times m)}=(\lambda(F_1), \lambda(F_2), \ldots, \lambda(F_m))$ is called as \emph{characteristic matrix}. We denote $row\Lambda$ to be the $\mathbb{Z}_2$-space generated by the $n$ rows of $\Lambda$, namely the \emph{row space} of the  characteristic matrix $\Lambda$. Then we have the following theorem:

\begin{theorem}  \label{theorem: ChoiP} \emph{(Choi-Park \cite{ChoiP:2013})} For a coefficient ring $G$, 
	$$H^p(M(K,\lambda);G)\cong\underset{\varphi^{-1}(\omega) \in row \Lambda, ~ \omega\subseteq[m]}\oplus\widetilde{H}^{p-1}(K_\omega;G), $$
	where $K_{\omega}$ is the full sub-complex of $K$ by restricting to $\omega \subset [m]$. In particular,  $$\beta^i(M(K,\lambda);G)=\underset{\omega \in row \Lambda}\sum{\widetilde\beta^{i-1}}(K_\omega;G), $$
where $\beta^{-1}(K_\omega)=\left\{
\begin{array}{rcl}1 &\mbox{{\rm if}}& K_\omega=\emptyset, \\
0 &\mbox{{\rm if}}&otherwise
\end{array}\right.$ is defined particularly. 
\end{theorem}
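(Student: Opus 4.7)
My plan is to reduce the computation to the known cohomology of the real moment-angle complex $\mathbb{R}\mathcal{Z}_K$, and then exploit the fact that $M(K,\lambda)$ is the orbit space of the free $H^r = \ker\Lambda$ action on $\mathbb{R}\mathcal{Z}_K$ to isolate the summands that descend to the quotient.

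First, I would invoke the Bahri--Bendersky--Cohen--Gitler / Cai cell decomposition of $\mathbb{R}\mathcal{Z}_K = (D^1,S^0)^K \subset (D^1)^m$: the product cell structure has open cells parametrized by pairs $(\sigma,\epsilon)$ with $\sigma \in K$ and $\epsilon \in \{\pm 1\}^{[m]\setminus \sigma}$, and the cellular cochain complex splits as an internal direct sum $C^*(\mathbb{R}\mathcal{Z}_K;G) = \bigoplus_{\omega \subseteq [m]} C^*_\omega$, where $C^*_\omega$ collects the cochains associated to cells whose ``sign support'' in the $S^0$-coordinates is supported in $\omega^c$. A direct bookkeeping of coboundaries identifies $C^*_\omega$, up to a degree shift by one, with the reduced simplicial cochain complex $\widetilde{C}^{*-1}(K_\omega;G)$ of the full subcomplex $K_\omega \subseteq K$, with the convention $\widetilde{C}^{-1}(\emptyset;G) = G$. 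Taking cohomology yields Cai's formula
\begin{equation*}
H^p(\mathbb{R}\mathcal{Z}_K;G)\;\cong\;\bigoplus_{\omega\subseteq[m]}\widetilde{H}^{p-1}(K_\omega;G).
\end{equation*}

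Second, I would analyze how the $\mathbb{Z}_2^m$-action interacts with this decomposition. Each generator $e_i \in \mathbb{Z}_2^m$ preserves the summand $C^*_\omega$ and acts on it by the sign $(-1)^{\langle e_i,\varphi^{-1}(\omega)\rangle}$; equivalently, $C^*_\omega$ is the isotypic component for the character $\chi_{\varphi^{-1}(\omega)}$. The covering projection $\mathbb{R}\mathcal{Z}_K \to M(K,\lambda) = \mathbb{R}\mathcal{Z}_K/H^r$ identifies the cellular cochain complex of the quotient with the $H^r$-fixed subcomplex of $C^*(\mathbb{R}\mathcal{Z}_K;G)$. A summand $C^*_\omega$ is $H^r$-fixed exactly when $\chi_{\varphi^{-1}(\omega)}|_{H^r}$ is trivial, which happens iff $\varphi^{-1}(\omega)\in (\ker\Lambda)^{\perp} = \operatorname{row}\Lambda$; for the remaining $\omega$, each free $H^r$-orbit of cells contributes paired cochains with opposite signs that cancel in the invariant subcomplex, so those summands vanish. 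Hence
\begin{equation*}
C^*(M(K,\lambda);G)\;\cong\;\bigoplus_{\varphi^{-1}(\omega)\in\operatorname{row}\Lambda}\widetilde{C}^{*-1}(K_\omega;G),
\end{equation*}
and passing to cohomology gives the stated isomorphism. The Betti-number statement is then immediate by taking ranks summand by summand, with the $\omega = \emptyset$ case (automatically in $\operatorname{row}\Lambda$ since $0 \in \operatorname{row}\Lambda$) accounting for the convention $\widetilde{\beta}^{-1}(\emptyset) = 1$.

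The main technical obstacle is the cochain-level BBCG/Cai identification $C^*_\omega \cong \widetilde{C}^{*-1}(K_\omega;G)$: one has to match, term by term and sign by sign, the coboundary map on the product cell structure of $(D^1)^m$ restricted to $\omega$-type cochains with the simplicial coboundary of $K_\omega$, which is the combinatorial heart of Cai's argument. A secondary subtlety is the invariants computation over an arbitrary coefficient ring $G$: since averaging is unavailable when $2 \in G$ is not a unit, the ``nontrivial-character summands vanish'' step must be carried out cellularly, by exhibiting the explicit free $H^r$-orbit structure on the cells underlying each $C^*_\omega$ with $\varphi^{-1}(\omega) \notin \operatorname{row}\Lambda$ and observing that the paired-sign cancellation is genuinely a direct-sum splitting at the cochain level, not merely a consequence of a group-cohomological vanishing.
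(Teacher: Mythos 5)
The paper does not actually prove this statement: it is imported verbatim from Choi--Park \cite{ChoiP:2013}, so there is no internal proof to compare you against. Your outline is essentially the standard argument behind that theorem (Cai's splitting of the cochain complex of $\mathbb{R}\mathcal{Z}_K$, identification of $C^*(M(K,\lambda);G)$ with the $H^r$-invariant cochains of the free cover, and selection of exactly those characters that are trivial on $\ker\Lambda$, i.e.\ of $\operatorname{row}\Lambda$).

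There is, however, a genuine gap, and it sits exactly at the point you label a ``secondary subtlety'' and then dismiss. Your argument needs the decomposition of $C^*(\mathbb{R}\mathcal{Z}_K;G)$ to be simultaneously (a) an internal direct sum of the full cochain complex and (b) the $\mathbb{Z}_2^m$-character-isotypic decomposition. These two requirements are compatible only when $2$ is a unit in $G$: on each $D^1$-factor the $(\pm1)$-eigenvectors $\{1\}^*+\{-1\}^*$ and $\{1\}^*-\{-1\}^*$ span only an index-two sublattice of the integral $0$-cochains, so over $\mathbb{Z}$ (or $\mathbb{Z}_2$) the isotypic pieces do not span, Cai's integral splitting is a different, non-equivariant one, and your claim that ``the paired-sign cancellation is genuinely a direct-sum splitting at the cochain level'' is false. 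Correspondingly, the theorem as stated for an arbitrary coefficient ring $G$ is itself false: for the small cover $\mathbb{RP}^2$ over $\Delta^2$ with coloring $e_1,e_2,e_1+e_2$, every $K_\omega$ with $\varphi^{-1}(\omega)\in\operatorname{row}\Lambda$ and $\omega\neq\emptyset$ is a single edge, so the formula predicts $H^2(\mathbb{RP}^2;G)=0$, which fails for $G=\mathbb{Z}$ and for $G=\mathbb{Z}_2$. The actual hypothesis in \cite{ChoiP:2013} is that $2$ is invertible in $G$ (e.g.\ $G=\mathbb{Q}$ or a $\mathbb{Z}[\tfrac12]$-algebra); under that hypothesis your argument does go through, since averaging is then available and the eigenspace decomposition is an honest internal splitting. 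This restriction is harmless for the paper, which only ever applies the theorem with rational coefficients, but your proof cannot be repaired for general $G$ because the statement itself does not hold there.
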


Here every full-subcomplex $K_\omega$ is represented by a vector $\varphi^{-1}(\omega)$, which is actually an element of the row space  $row\Lambda$. Such kind of vector is called the \emph{representative} of $K_\omega$.

By means of Theorem \ref{theorem: ChoiP}, we can calculate the Betti numbers of $M(K, \lambda)$ only through the combinatorial information of the colored polytope and the row space of its characteristic matrix. The following is a simple example.

\begin{example}\label{example:3} Calculating the Betti numbers of the Klein bottle $S=M(L,\lambda)$.

The left side of Figure \ref{figure: 3example1}  is a colored  2-dimensional square $L$  and the right side  is the corresponding coloring on the dual of its boundary  $K=(\partial L)^*$.
\begin{figure}[H]
	\scalebox{0.7}[0.7]{\includegraphics {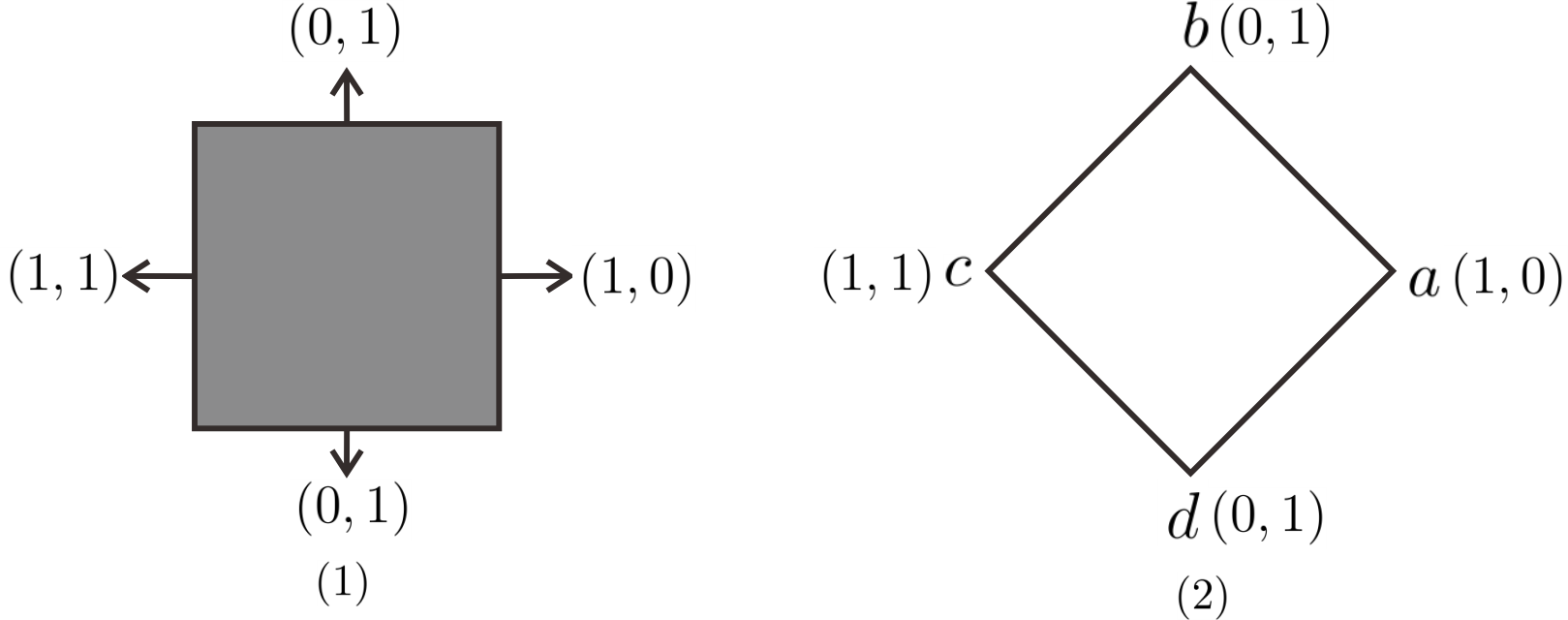}}	
	\caption{Colored square for Example \ref{example:3}.}\label{figure: 3example1}
\end{figure}

Then the row space is  $Row \Lambda=\langle(1, 0, 1, 0), (0, 1, 1, 1)\rangle=\{(0,0,0,0), (1,0,1,1), (0,1,0,1), (1,1,1,0)\}$.

For $\omega_1=(0, 0, 0, 0)$,  $K_{\omega_1}=\emptyset$. By definition $\beta^{-1}(K_{\omega_1})=1$. Thus $\widetilde\beta^{0}(K_{\omega_1})=\widetilde\beta^{0}(\emptyset)=0$, $\widetilde\beta^{1}(K_{\omega_1})=\beta^{1}(\emptyset)=0$.

For $\omega_2=(1, 0, 1, 1)$, then $K_{\omega_2}$ is as shown in Figure \ref{example:klein} (1). So $\widetilde\beta^{0}(K_{\omega_2})=0$, $\widetilde\beta^{1}(K_{\omega_2})=0$.

For $\omega_3=(0, 1, 0, 1)$, then $K_{\omega_3}$ is as shown in  Figure \ref{example:klein} (2). So $\widetilde\beta^{0}(K_{\omega_3})=1$, $\widetilde\beta^{1}(K_{\omega_3})=0$.

For $\omega_4=(1, 1, 1, 0)$, then $K_{\omega_4}$ is as shown in  Figure \ref{example:klein} (3). So $\widetilde\beta^{0}(K_{\omega_4})=0$, $\widetilde\beta^{1}(K_{\omega_4})=0$.

\begin{figure}[H]
	\scalebox{0.7}[0.7]{\includegraphics {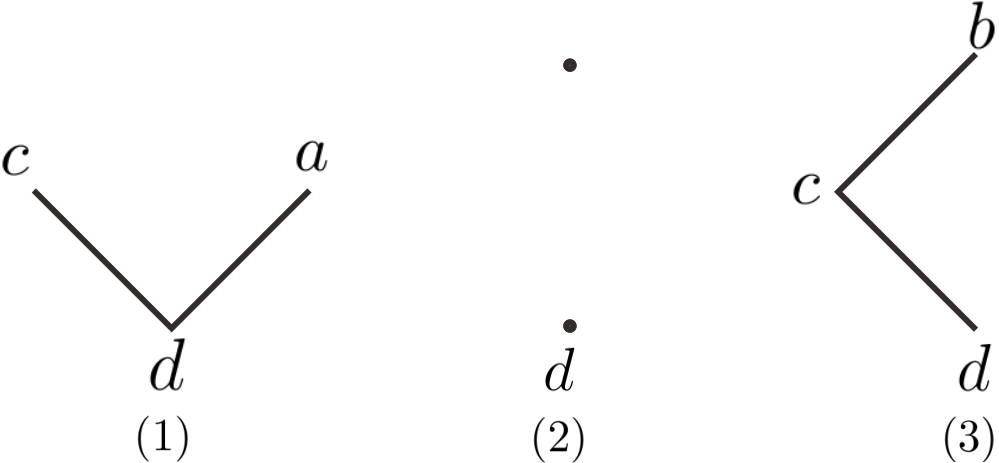}} 
	\caption{$\beta_1$ of a Klein bottle.}\label{example:klein}
\end{figure}

Then from Theorem \ref{theorem: ChoiP}, we have:
 $$\beta^0(S)=\beta^{-1}(K_{\omega_1})=\beta^{-1}(\emptyset)=1;$$
 $$\beta^1(S)=\widetilde\beta^{0}(K_{\omega_1})+\widetilde\beta^{0}(K_{\omega_2})+\widetilde\beta^{0}(K_{\omega_3})+\widetilde\beta^{0}(K_{\omega_4})=0+0+1+0=1;$$
 $$\beta^2(S)=\widetilde\beta^{1}(K_{\omega_1})+\widetilde\beta^{1}(K_{\omega_2})+\widetilde\beta^{1}(K_{\omega_3})+\widetilde\beta^{1}(K_{\omega_4})=0+0+0+0=0.$$

 That coincides with the well-known result of rational homology groups of  the Klein bottle.

\end{example}
\subsection{Object polytopes $nP$ }

In the following, we always assume $P$ to be the regular right-angled hyperbolic dodecahedron in $\mathbb{H}^{3}$ with twelve 2-dimensional facets. Using $nP$, specially $1P=P$, to denote the linearly-glued $n$ copies of  $P$ as shown in Figure \ref{figure: 4example}, and $nK$, specially $1K=K$, is the dual of the boundary of $nP$. So for each polytope $nP$, $\ n\geqslant2$, there are $(n+3)$ layers of facets of $nP$:
both the first and the last layer are pentagons; 
both the second and the $(n+2)$-th layers consist of five pentagons;
each layer from the third to the $(n+1)$-th consists of five hexagons. There is no hexagonal layer in $1P$, and the polytope  $nP$ has $(5n+7)$ facets in total. All the polytopes $nP$, $n\in \mathbb{Z}_+$, are right-angled  hyperbolic polytopes. Notations $nP$ and $nK$ make sense in the rest of this paper unless other statements.
\begin{figure}[H]
	\scalebox{0.58}[0.58]{\includegraphics {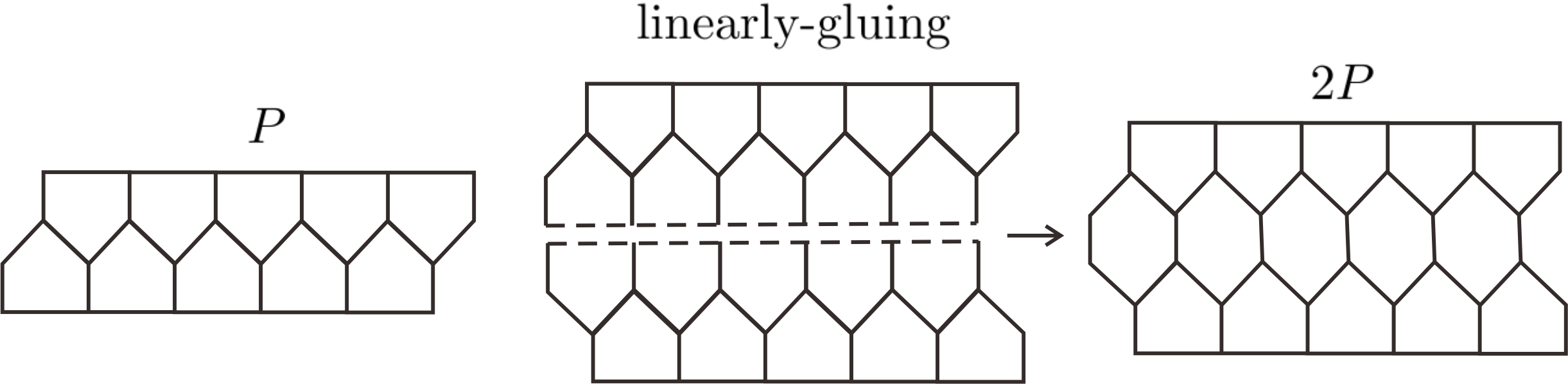}}
	\caption{Building up the polytope $2P$ by linearly-gluing.}\label{figure: 4example}
\end{figure}

\begin{definition}For a simplicial complex $N$, denoting $X(N)=(a_{ij})_{m\times m}$ the \emph{adjacent matrix} of $N$,  where $m=\vert\{{\rm vertices} \ {\rm of}\  N\}\vert$. That means
\begin{center}
$a_{ij}=\left\{
\begin{array} {rcl}1 &\mbox{
 if vertex $i$ and $j$ are connected by an 1-dimensional  simplex in $N$} \\ 0 & \mbox{otherwise}
\end{array}\right.$.
\end{center}
\end{definition}
If $N$ is the dual of a simple polytope $L$, namely $N=(\partial L)^*$,  there is a one-to-one correspondence between facets of $L$ and vertices of $N$. Thus we have $\vert \mathcal{F}(L)\vert=\vert\{{\rm vertices} \ {\rm of}\  N\}\vert=m$. Thus the matrix is also called as the \emph{adjacent matrix} of simple polytope $L$ and can be denoted by either $X(L)$ or $X(N)$. For $nP$, $n\in \mathbb{Z}_+$, is always a flag simple polytope, then all the intersecting information about its facets is included in the adjacent matrix. 

In order to get more disciplined adjacent matrices $X(nP)$ as $n$ increases, we order the facets of the polytope $nP$ by the following manners: the  first and the  last layer are ordered as $1$ and $5n+7$ respectively; the facets between are labeled layer by layer. For an even layer, we start from the middle and then order the rest by doubly siding (left-right). While for an odd layer we adopt a right-left doubly siding. 

We illustrate all these descriptions on $5P$ as shown in Figure \ref{figure: 5example}, where the double sidings of even and odd layers are displayed by the arrow-lines on the second and third layers respectively.
\begin{figure}[H]
	\scalebox{0.8}[0.8]{\includegraphics {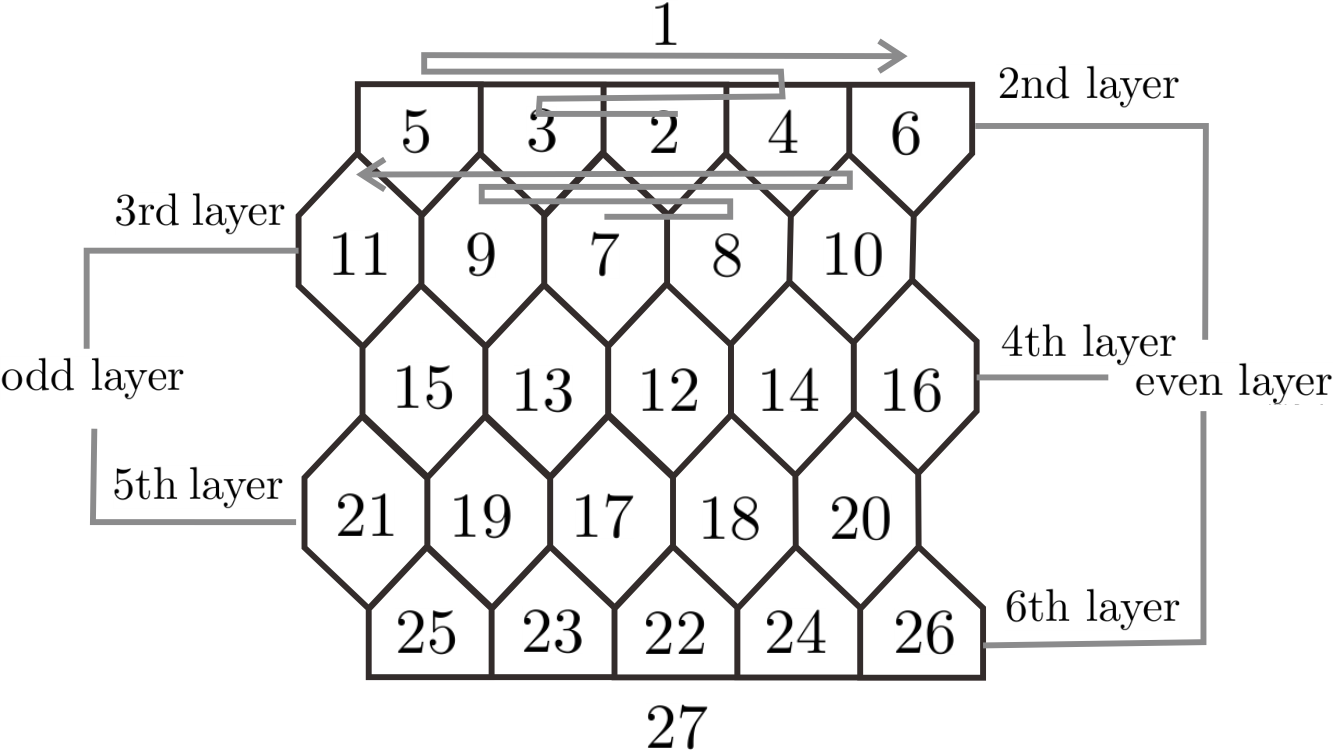}}
	\caption{Facet ordering of the polytope $5P$.}\label{figure: 5example}
\end{figure}

Using this ordering manner,  we achieve more unified increasing patterns of the adjacent matrices and display some as shown in Figure \ref{figure:adjacent matrix} (the omitted entries are all zeros):

\begin{figure}[H]
	\scalebox{0.77}[0.77]{\includegraphics {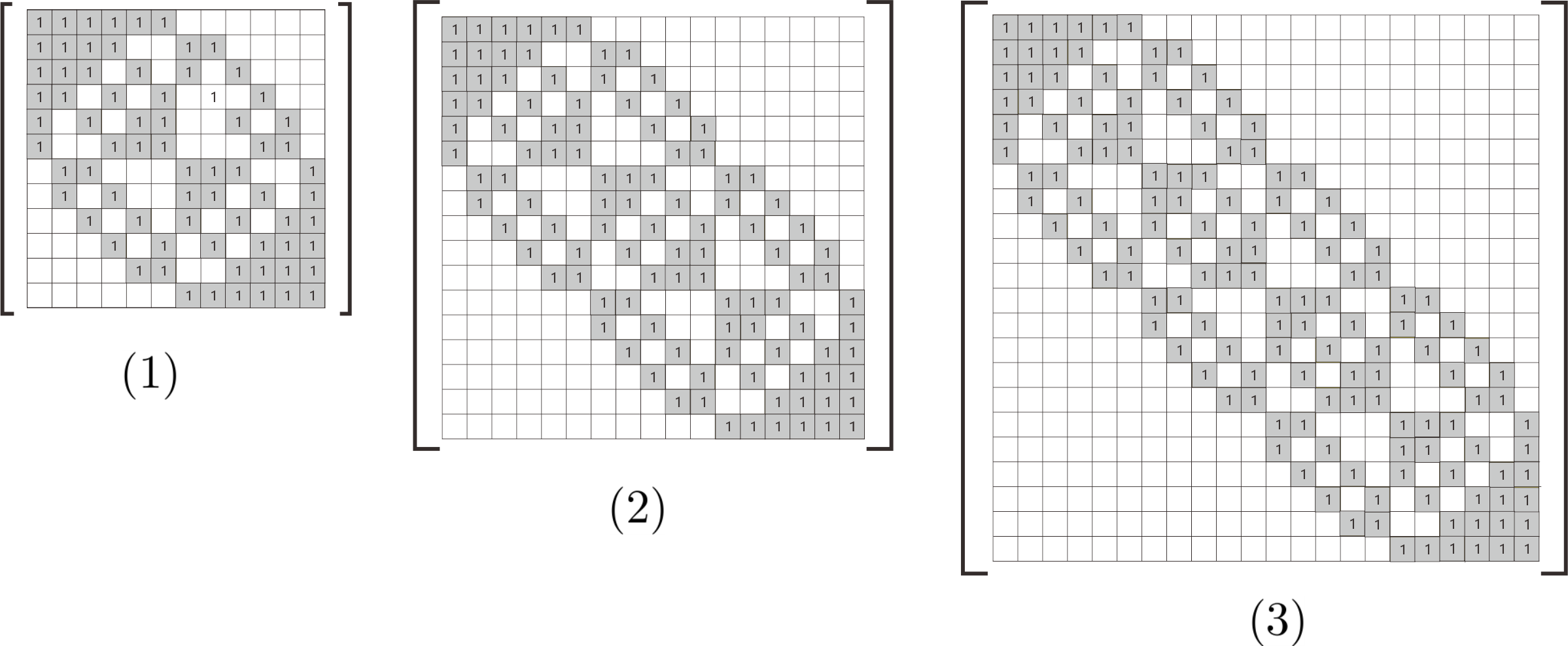}} 
	\caption{Adjacent matrices of the polytopes $P$, $2P$ and $3P$.}\label{figure:adjacent matrix}
\end{figure}

\subsection{classification}
All the real toric manifolds over the same polytope $L$ are $G$-manifolds of $L$ \cite{dj:1991}. Two $G$-manifolds $M_1$ and $M_2$ are  \emph{$\theta$-equivariant homeomorphic} if there exists a homeomorphism  $f: M_1 \rightarrow M_2$ and an automorphism $\theta$ of $G$, such that $f(gx)=\theta (g)f(x)$ for every $g\in G$ and
$x \in M_1$. Those two $G$-manifolds are then said to be \emph{DJ-equivalent over $L$}.
For an $n$-dimensional simple polytope $L$

We fix the colorings  of  three facets of the polytope $nL$, which are adjacent to a fixed  vertex, to be $e_1, e_2$ and  $e_3$, the standard basis of $\mathbb{Z}_2^3$. Therefore the general linear group action $GL_3(\mathbb{Z}_2)$ have been moduled out. All elements of one class of $\{M(L,\lambda)\}/GL_3(\mathbb{Z}_2)$ are said to be $GL_3(\mathbb{Z}_2)$-equivalent, where $\{M(L,\lambda)\}$ is the set of all possible small covers over $L$. For example, assuming $L$ is a square, and we order the facets as shown in (1) of Figure \ref{figure:0}. Then there are totally 18 small covers that recovered from 18 colored $L$. The image of these characteristic functions on $\mathcal{F}(L)$ is arranged in a vector, $(\lambda(F_1),~\lambda(F_2),~\lambda(F_3),~\lambda(F_4)$), as shown in Table \ref{table:0}. 
	\begin{table}[H]
		\begin{tabular}{|c|c|c|}
			\hline
		($e_1$, $e_2$, $e_1$, $e_2$)&($e_1$, $e_2$, $e_1$, $e_1+e_2$)&($e_1+e_2$, $e_1$, $e_2$, $e_1$) \\
		
		($e_2$, $e_1$, $e_2$, $e_1$)&($e_2$, $e_1$ $e_2$, $e_1+e_2$)&($e_1$, $e_2$, $e_1+e_2$, $e_2$)\\
		
		($e_1+e_2$, $e_1$, $e_1+e_2$, $e_1$)&($e_1+e_2$, $e_1$, $e_1+e_2$, $e_2$)&($e_2$, $e_1$, $e_1+e_2$, $e_1$)\\

		($e_1$, $e_1+e_2$, $e_1$, $e_1+e_2$)&($e_1$, $e_1+e_2$, $e_1$, $e_2$)&($e_1+e_2$, $e_2$, $e_1$, $e_2$)\\

		($e_2$, $e_1+e_2$, $e_2$, $e_1+e_2$)&($e_2$, $e_1+e_2$, $e_2$, $e_1$)&($e_2$, $e_1+e_2$, $e_1$, $e_1+e_2$)\\

		($e_1+e_2$, $e_2$, $e_1+e_2$, $e_2$)&($e_1+e_2$, $e_2$, $e_1+e_2$, $e_1$)&($e_1$, $e_1+e_2$, $e_2$, $e_1+e_2$)\\
		\hline
		\end{tabular}
		\caption{ All the small covers over square. }
		\label{table:0}
	\end{table}
	After fixing the colors of the first two adjacent facets to be $e_1$ and $e_2$, we get three $GL_2(\mathbb{Z}_2)$-equivalent classes in total. Three representatives picked from the each of the classes are illustrated below. The characteristic functions in (2), (3) and (4) of Figure \ref{figure:0} are denoted by $\lambda_1$, $\lambda_2$ and $\lambda_3$ respectively.
	
\begin{figure}[H]
	\scalebox{0.5}[0.5]{\includegraphics {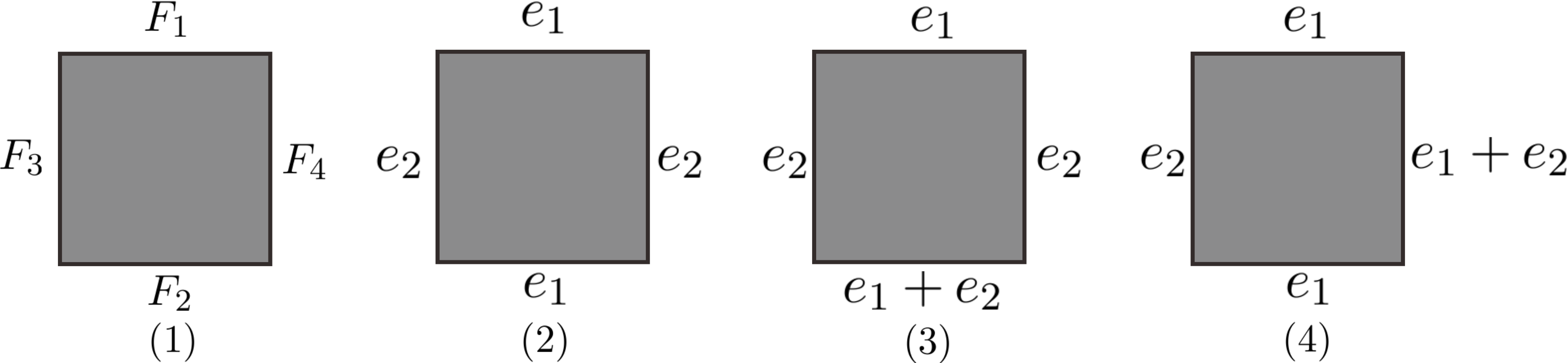}}
	\caption{Three DJ-equivalent representatives.}\label{figure:0}
\end{figure}
	
The inclusion relations of different classifications upon toric manifolds that corresponds  a $\mathbb{Z}_2^{m-r}$-coloring and a certain polytope $L$, where $m=\vert \mathcal{F}(L)\vert$  and $r$ is the rank of the subgroup that being quotiented freely, is depicted in Figure \ref{figure:00}.
\begin{figure}[H]
	\scalebox{0.5}[0.5]{\includegraphics {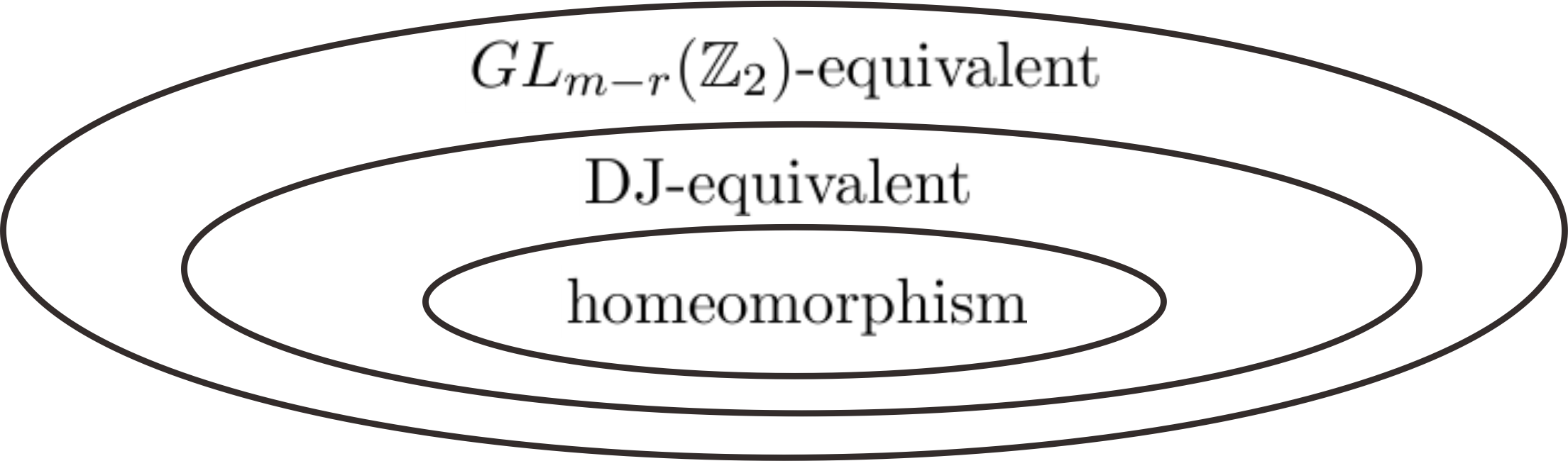}}
	\caption{Different classifying relations.}\label{figure:00}
\end{figure}

The DJ-equivalent set might sometimes coincide with the homeomorphism set. For example homeomorphic class and DJ-equivalent class of small covers over the hyperbolic right-angled dodecahedron $P$ mean the same, see \cite{Scott:02}. 

\subsection{Useful forms }
To keep the data concise, we encode every $\mathbb{Z}_2^*$-coloring vector by an integer through binary. For example in the $\mathbb{Z}_2^3$-coloring case, we can use $1$, $2$, $3$,  $4$, $5$, $6$ and  $7$ to represent the seven colorings $(1,0,0)$, $(0,1,0)$,  $(1,1,0)$, $(0,0,1)$ $(1,0,1)$, $(0,1,1)$ and  $(1,1,1)$ respectively. Then a characteristic matrix can also be viewed as a characteristic vector. For example, the characteristic matrix of the $\mathbb{Z}_2^3$-coloring characteristic function in  Example \ref{example:2} is:
\begin{center}
$(\lambda(\mathcal{F}_1),\lambda(\mathcal{F}_2),\lambda(\mathcal{F}_3))=(\lambda(a,b),\lambda(b,c),\lambda(a,c))=
\begin{pmatrix}
1&0 & 0\\
0&1 & 0\\
0&0 & 1\\
\end{pmatrix}$.
\end{center}

\noindent Then the  corresponding characteristic vector $C$ is $(1, 2, 4)$, and the row space  $row \Lambda=\langle (1, 0, 0), (0, 1, 0), (0, 0, 1)\rangle$. By the way, the characteristic function $\lambda$, characteristic matrix $\Lambda$ and the characteristic vector $C$ can be inferred from each other easily. Here the characteristic vector $C$ is the most  concise representing form among them.

\section{\textbf{A key lemma}}

The purpose of this section is to prove Lemma \ref{lemma: key}, which is the key in the proof of Theorem  \ref{theorem: betti}.

\begin{definition}
Starting from a $\mathbb{Z}_2^3$-coloring $\lambda$ on the polytope $nP$, we can extend it to  $(2^m-1)$ many $\mathbb{Z}_2^4$-colorings on $nP$ by adding a non-zero fourth row to the $3 \times m$ characteristic matrix $\Lambda$ of $\lambda$ as shown below
\begin{center}
	$
	\begin{pmatrix}
	1&0 & 0 &...&*\\
	0&1 & 0 &...&*\\
	0&0 & 1 &...&*\\
	*&* & * &...&*\\
	\end{pmatrix},
	$
\end{center}
where $m=5n+7$ and $*\in \{0,1\}$. Those characteristic functions are called the \emph{extensions} of $\lambda$ and they naturally satisfy the non-singularity condition,
\end{definition}



\begin{definition} A $\mathbb{Z}_2^3$-coloring $\lambda$ on the polytope  $nP$ is \emph{admissible} if there is a $\mathbb{Z}_2^4$-coloring  extension of $\lambda$, denoted by $\delta$, satisfying $M(nP,\lambda)$ is non-orientable and  $M(nP,\delta)$ is orientable.
\end{definition}

H. Nakayama and Y. Nishimura discussed the orientability of a  small cover in \cite{NakayamaN:05} and below is the main theorem.

\begin{theorem} \label{theorem: NakayamaN} \emph{(H. Nakayama-Y. Nishimura \cite{NakayamaN:05})} For a simple $n$-dimensional polytope $L$, and for a basis $\{e_1,...,e_n\}$ of $\mathbb{Z}_2^n$, a homomorphism $\epsilon:\mathbb{Z}_2^n\rightarrow\mathbb{Z}_2=\{0,1\}$ is defined by $\epsilon(e_i)=1$, for each $i=1, ... , n$. A small cover $M(L,\lambda)$ is orientable if and only if there exists a basis $\{e_1,...,e_n\}$ of $\mathbb{Z}_2^n$ such that the image of $\epsilon\lambda$ is $\{1\}$.
\end{theorem}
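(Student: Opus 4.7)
The plan is to translate the topological question of orientability into a combinatorial condition on the characteristic function $\lambda$, and then recast that condition as the existence of a suitable basis. First I would fix the model of $M(L,\lambda)$ as the quotient of $L \times \mathbb{Z}_2^n$ used in Section 2, so that the manifold is tiled by $2^n$ copies $L_g := L \times \{g\}$ of the polytope $L$, with $L_g$ and $L_{g'}$ sharing a facet precisely when $g + g' = \lambda(F)$ for some facet $F$. Since each such identification is locally a reflection across the hyperplane of $F$, it is orientation-reversing when viewed from the two sides.

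Next I would attempt to orient the tiles consistently. Fix an orientation on $L_0$ and, for every $g \in \mathbb{Z}_2^n$, assign to $L_g$ either the reference orientation or its opposite, encoding this choice by $\sigma \colon \mathbb{Z}_2^n \to \mathbb{Z}_2$ with $\sigma(0)=0$. Compatibility across a facet gluing forces $\sigma(g+\lambda(F)) \ne \sigma(g)$, i.e.\ $\sigma(\lambda(F))=1$ for every facet $F$. Since the vectors $\lambda(F_i)$ span $\mathbb{Z}_2^n$ by nonsingularity at any vertex, well-definedness of $\sigma$ forces it to be a group homomorphism. This yields the intrinsic criterion: $M(L,\lambda)$ is orientable if and only if there is a homomorphism $\epsilon \colon \mathbb{Z}_2^n \to \mathbb{Z}_2$ with $\epsilon(\lambda(F))=1$ for every facet $F$.

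Finally I would translate the existence of $\epsilon$ into the basis condition of the theorem. Given such an $\epsilon$, pick any $v_1 \notin \ker\epsilon$, extend it to a basis $\{v_1, w_2, \ldots, w_n\}$ of $\mathbb{Z}_2^n$, and replace each $w_i$ having $\epsilon(w_i)=0$ by $w_i+v_1$; the resulting basis $\{e_i\}$ satisfies $\epsilon(e_i)=1$ for every $i$. Conversely, if such a basis $\{e_i\}$ is prescribed, the linear functional $\epsilon$ defined by $\epsilon(e_i)=1$ is exactly the sum-of-coordinates functional in this basis, and by hypothesis it sends each $\lambda(F)$ to $1$.

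The main obstacle I foresee is the careful justification in the second step that the combinatorial sign assignment on tiles really captures the global topological orientability of $M(L,\lambda)$. One must check, at each codimension-two face of $L$, that the two paths between tiles sharing that face produce the same sign constraint, so that consistency along facets already forces consistency globally. This comes down to the commutativity of pairs of reflections across the two facets meeting at a codimension-two face, which holds because $L$ is a simple polytope and its facets fit together locally with the combinatorics of a cube; once this local coherence is in place, the global argument is a routine homomorphism extension.
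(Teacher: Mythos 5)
Note first that the paper does not prove this statement at all: it is quoted verbatim from Nakayama--Nishimura \cite{NakayamaN:05} and used as a black box, so there is no in-paper argument to compare yours against. Judged on its own, your proof is correct and is essentially the standard argument for this result: orient the $2^n$ tiles $L_g$ of $M(L,\lambda)=L\times\mathbb{Z}_2^n/\sim$ by a sign function $\sigma:\mathbb{Z}_2^n\to\mathbb{Z}_2$, observe that each facet gluing is orientation-reversing with respect to the reference orientations, so coherence forces $\sigma(g+\lambda(F))=\sigma(g)+1$ for all $g$ and all facets $F$; since the colors $\lambda(F)$ generate $\mathbb{Z}_2^n$, any such $\sigma$ with $\sigma(0)=0$ is automatically linear, giving the homomorphism $\epsilon$ with $\epsilon(\lambda(F))=1$, and the basis reformulation is the elementary linear-algebra step you describe. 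The one point you flag as delicate --- passing from facet-level coherence to genuine orientability of $M$ --- can be disposed of slightly more cleanly than by checking codimension-two links by hand: the singular strata of the tiling have codimension at least two, so every loop (and every homotopy of loops) in $M$ can be pushed into the complement $M'$ of these strata; hence $w_1(M)=0$ if and only if $w_1(M')=0$, and $M'$ is oriented precisely by your sign assignment. Your four-tiles-around-a-codimension-two-face check is the same obstruction seen locally (an even number of sign changes around the $4$-cycle), so either route closes the gap; with that step made explicit, the proposal is a complete proof of the cited theorem.
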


In particular, a small cover $M(L, \lambda)$ over a 3-dimensional simple polytope $L$ is orientable if and only if it is colored by $e_1$,  $e_2$, $e_3$ and $e_1 +e_2+e_3$ up to $GL_{3}(\mathbb{Z}_{2})$-action.

Theorem \ref{theorem: NakayamaN} can actually be adjusted to meet all real toric manifolds instead of only small covers by merely parallel generalization. We can also use Theorem \ref{theorem: ChoiP} with rational coefficient  to re-check this claim. Because the $n$-th Betti number of a real toric manifold $M(L,\delta)$ is 1 if and only if there is an element in the row space of $\Delta$ with all the entries are 1. Here $\Delta$ is the characteristic matrix of $\delta$, which is a $\mathbb{Z}_2^4$-coloring extension of $\lambda$. When $M(L,\lambda)$ is non-orientable, then the only possible row of $row\Delta$ with all entries are $1$ is the row contributed by summing up all four rows of $\Delta$. That is to say the sum of every column of the characteristic matrix $\Delta$ is certainly $1 ~~mod~~ 2$.

So by Theorem \ref{theorem: NakayamaN} and paragraph above, we can have:
\begin{remark}\label{remark:2}
A $\mathbb{Z}_2^3$-coloring $\lambda$ over a simple 3-dimensional polytopy $L$ is admissible if and only if $M(L,\lambda)$ is non-orientable.
\end{remark}

Furthermore, we can obtain the following proposition based on previous discussions and some facts about fundamental group of a double cover:

\begin{proposition}\label{remark:1}
For every non-orientable $\mathbb{Z}_2^3$-coloring $\lambda$ over the $3$-dimensional polytope $nP$, there is a unique $\mathbb{Z}_2^4$-coloring extension $\delta$ such that the $3$-manifold $M(nP,\delta)$ is the orientable double cover of the non-orientable $3$-manifold $M(nP,\lambda)$.
\end{proposition}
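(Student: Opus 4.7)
The plan is to take the natural extension $\delta$ defined by $\delta(F)=(\lambda(F),\,1+|\lambda(F)|\bmod 2)$ and verify in turn that (i) $\delta$ is a legitimate $\mathbb{Z}_2^4$-coloring, (ii) the linear projection $\pi\colon\mathbb{Z}_2^4\to\mathbb{Z}_2^3$ onto the first three coordinates induces a $2$-fold covering $p\colon M(nP,\delta)\to M(nP,\lambda)$, and (iii) $M(nP,\delta)$ is orientable and connected, so the cover is the orientation double cover. Uniqueness will then be handled via Theorem \ref{theorem: NakayamaN} together with the non-orientability hypothesis on $\lambda$.

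The coloring check is quick: at a vertex $v$ with adjacent facets $F_1,F_2,F_3$, any linear relation on $\delta(F_1),\delta(F_2),\delta(F_3)$ projects under $\pi$ to a relation on $\lambda(F_1),\lambda(F_2),\lambda(F_3)$, which form a basis of $\mathbb{Z}_2^3$ by the non-singularity of $\lambda$; hence the $\delta(F_i)$ are linearly independent in $\mathbb{Z}_2^4$. The same $\pi$ sends $\delta$ to $\lambda$ and carries each face subgroup $G_f^{\delta}$ onto $G_f^{\lambda}$, so it descends to a continuous surjection $p$. A direct check shows $e_4\notin G_f^{\delta}$ for every face $f$: a representation $e_4=\sum c_i\delta(F_i)$ forces $\sum c_i\lambda(F_i)=0$, hence all $c_i=0$, giving the wrong fourth coordinate. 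Therefore every fiber of $p$ has size exactly $2$ and $p$ is a regular double cover.

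Orientability of $M(nP,\delta)$ is then immediate from Theorem \ref{theorem: NakayamaN}: the functional $\epsilon(x_1,x_2,x_3,x_4)=x_1+x_2+x_3+x_4$ satisfies $\epsilon(\delta(F))=|\lambda(F)|+(1+|\lambda(F)|)=1$ on every facet by construction. For connectedness I would show that the vectors $\{\delta(F)\}$ span $\mathbb{Z}_2^4$: if a nonzero $\mu=(b_1,b_2,b_3,b_4)$ annihilated all of them, then $b_4=0$ forces $(b_1,b_2,b_3)=0$ (since the $\lambda(F)$ already span $\mathbb{Z}_2^3$), while $b_4=1$ would give $(b_1,b_2,b_3)\cdot\lambda(F)\equiv 1$, making $M(nP,\lambda)$ orientable by Theorem \ref{theorem: NakayamaN} and contradicting the hypothesis. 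A connected orientable $2$-fold cover of a non-orientable closed $3$-manifold is the orientation double cover, which closes the existence half.

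The main obstacle is uniqueness. Any competing extension $\delta'$ making $M(nP,\delta')$ orientable supplies, via Theorem \ref{theorem: NakayamaN}, a nonzero functional $\epsilon'\in(\mathbb{Z}_2^4)^*$ with $\epsilon'\circ\delta'\equiv 1$; the non-orientability of $\lambda$ forces $\epsilon'(e_4)=1$, which together with the fact that the first three rows of the characteristic matrix are prescribed to be $\lambda$ pins down $\delta_4'(F)=1+(a_1,a_2,a_3)\cdot\lambda(F)$ for some $(a_1,a_2,a_3)\in\mathbb{Z}_2^3$. The asserted uniqueness is therefore to be read modulo the subgroup of $GL_4(\mathbb{Z}_2)$ whose elements act as the identity on the first three coordinates of $\mathbb{Z}_2^4$---this subgroup acts simply transitively on the eight admissible choices of $(a_1,a_2,a_3)$---and the natural extension $(1,1,1)$ serves as the canonical representative, distinguished by being the unique choice invariant under permutations of the three coordinate rows. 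Making this final identification rigorous is, in my view, the subtlest part of the argument.
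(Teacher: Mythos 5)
Your existence half is correct, and in places more careful than the paper's own argument. Where the paper obtains the double covering from the short exact sequence involving the Coxeter group $W(nP)$ and the identification of $\pi_1(M(nP,\delta))$, $\pi_1(M(nP,\lambda))$ with kernels (and implicitly uses connectedness), you verify directly that the coordinate projection $\mathbb{Z}_2^4\to\mathbb{Z}_2^3$ descends to a two-to-one covering by checking $e_4\notin G_f^{\delta}$ at every face, and you prove connectedness by showing the colors $\delta(F)$ span $\mathbb{Z}_2^4$ --- a point the paper does not spell out. Both you and the paper then rely on Theorem \ref{theorem: NakayamaN} in its ``real toric manifold'' form (equivalently on Theorem \ref{theorem: ChoiP}) for orientability, so the ingredients are the same; your route is just more elementary and self-contained on the covering-space side.

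On uniqueness your diagnosis is right but your repair is not. As you note, for non-orientable $\lambda$ every fourth row of the form $d(F)=1+a\cdot\lambda(F)$ with $a\in\mathbb{Z}_2^3$ gives an extension $\delta_a$ for which $M(nP,\delta_a)$ is connected and orientable, hence is the orientation double cover of $M(nP,\lambda)$; so the literal uniqueness in Proposition \ref{remark:1} (and in the paper's proof, which asserts a unique orientable extension) only holds after a normalization. The normalization the paper actually uses is the column-sum condition: among all $2^m-1$ extensions exactly one, namely $a=(1,1,1)$ (the natural extension from the introduction), has every column of its $4\times m$ characteristic matrix summing to $1\bmod 2$, and this is the only sense in which ``unique'' is needed later. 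Your proposed way of singling out the canonical representative fails, however: $a=(0,0,0)$, i.e.\ the all-ones fourth row, is also invariant under permuting the three coordinate rows and it too yields a connected orientable double cover, so permutation invariance does not distinguish $(1,1,1)$. You are right that the eight admissible choices form one orbit of the subgroup of $GL_4(\mathbb{Z}_2)$ fixing the first three coordinates, hence give DJ-equivalent (homeomorphic) manifolds, which is why the ambiguity is harmless for Theorem \ref{theorem: bounding}; but to close your argument you should either state uniqueness with the odd-column-sum normalization, as the paper effectively does, or claim only existence plus uniqueness up to that $GL_4(\mathbb{Z}_2)$ action.
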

\begin{proof}
By Theorem \ref{theorem: NakayamaN}, every non-orientable $\mathbb{Z}_2^3$-coloring ${\lambda}$ over the $3$-dimensional simple polytope $nP$ has a unique  $\mathbb{Z}_2^4$-coloring extension $\delta$ such that  $M(nP,\delta)$ is orientable. The characteristic matrix of $\delta$ is the only one, among all $\mathbb{Z}_2^4$-extensions of $\lambda$, satisfying that the sum of every column is $1 ~~mod~~ 2$.

Denoted by $W(nP)$ the Coxeter group of $nP$, then there is a short exact sequence:
$$1\longrightarrow W(nP)\xrightarrow{~l~} \mathbb{Z}_2^m\xrightarrow{~q_1~} \mathbb{Z}_2^4\xrightarrow{~q_2~} \mathbb{Z}_2^3 \longrightarrow 1.$$
We have $\pi_1(M(nP,\delta))=ker\delta$, $\pi_1(M(nP,\lambda))=ker\lambda$, where $l$ is the abelization and  $q_1$, $q_2$ are the natural  quotient maps, see \cite{dj:1991}. Denoting $\lambda^0:\mathcal F(L)=\{F_1,F_2,\dots,F_m\}\longrightarrow \mathbb{Z}_2^m$ a map that maps each $F_i$ to $e_i$. Then $\lambda=q_1\circ\lambda^0$, $\delta=q_2\circ q_1\circ\lambda^0$.

Thus $ker\lambda < ker\delta$ and $M(nP,\delta)$ is the orientable double cover of $M(nP,\lambda)$.
\end{proof}

The $\mathbb{Z}_2^4$-coloring $\delta$ on the polytope $nP$ in Proposition \ref{remark:1} is called an \emph{admissible extension} of $\lambda$ or a \emph{natural $\mathbb{Z}_2^4$-coloring associated to $\lambda$} (say a \emph{natural $\mathbb{Z}_2^4$-extension} of $\lambda$ for short).

From this proposition, on one hand, the existence and the uniqueness of the admissible extension make sense. On the other hand the orientability of a real toric manifold can be detected easily by the characteristic matrix. In particular, $(1,1,0)$, $(1,0,1)$ and $(0,1,1)$ in $\mathbb{Z}_2^3$, which are the binary form of 3, 5 and 6, are the only three elements whose item sum are $0$ mod 2. So a characteristic vector corresponding to a non-orientable $\mathbb{Z}_2^3$-coloring, should contain at least one of  3, 5 and  6. 
%
%
%
%
%
%
%
Moreover, by Theorem \ref{theorem: ChoiP}, the Betti number of the orientable manifold recovered by the admissible extension, called as the  natural $\mathbb{Z}_2^4$-extension $\delta$, can be computed clearly. And we show this routine in Example \ref{example:6} as a demonstration.

\begin{example} \label{example:6} Calculation of Betti numbers of some $M(P,\delta)$:

Figure \ref{figure:colored P} (1) is the dodecahedron $P$ whose 12 facets has been ordered, Figure \ref{figure:colored P} (2) is the simplicial complex $K=(\partial P)^*$ with its 12 vertices being ordered correspondingly.

\begin{figure}[H]
	\scalebox{0.9}[0.9]{\includegraphics {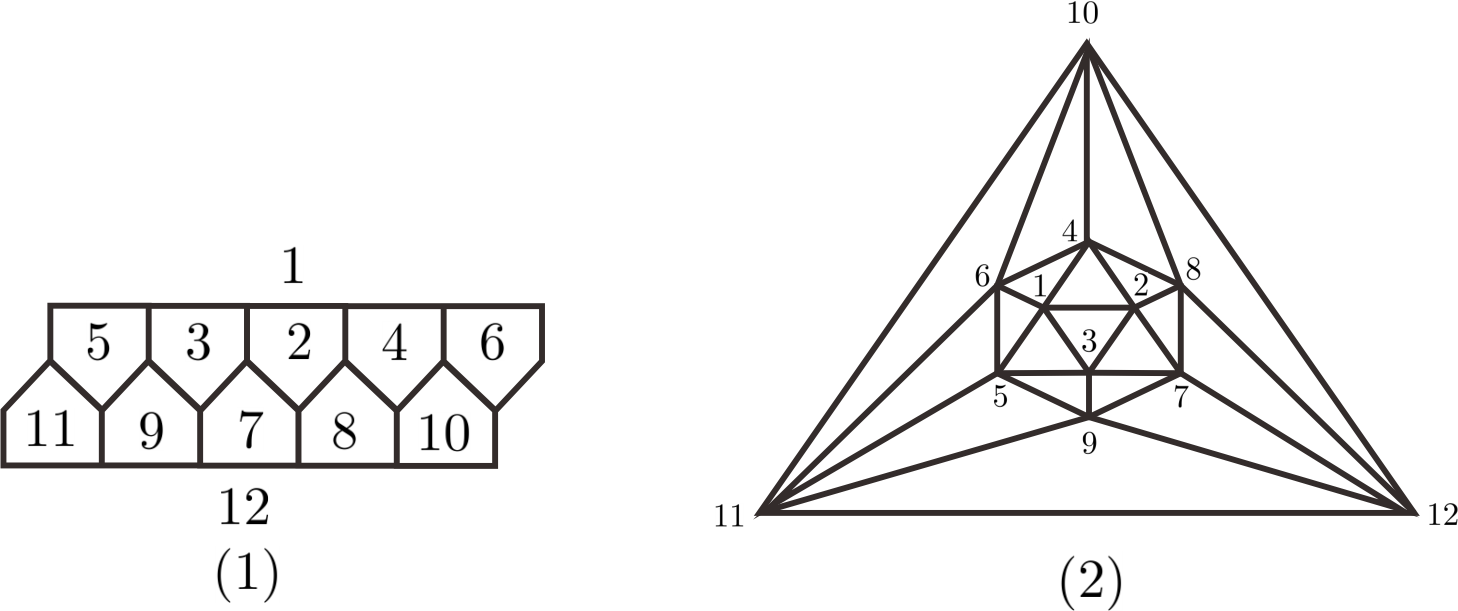}}
	\caption{Ordered facets of the polytope $P$ and the corresponding ordered vertices of $K=(\partial P)^*$.}
	\label{figure:colored P}
\end{figure}

Coloring the polytope $P$ with the characteristic vector  $v=(1,~ 2,~ 4,~ 5,~ 3,~ 7,~ 7,~ 3,~ 5,~ 4,~ 2,~ 1)$. We then have a $\mathbb{Z}_2^3$-coloring characteristic matrix:

	\begin{center}
		$\Lambda=
		\begin{pmatrix}
		\begin{smallmatrix}
		0&0 & 1&1&0&1&1&0&1&1&0&0\\
		0&1 & 0&0&1&1&1&1&0&0&1&0\\
		1&0 & 0&1&1&1&1&1&1&0&0&1\\
		\end{smallmatrix}
		\end{pmatrix}_{3\times 12}.$
	\end{center}
	
And by Proposition \ref{remark:1}, the $\mathbb{Z}_2^4$-coloring matrix of its admissible extension is
	\begin{center}
$\Delta=
		\begin{pmatrix}\begin{smallmatrix}
		&&&&&&&&&&&\\
		&&&&&&\Lambda&&&&&\\
		&&&&&&&&&&&\\
		0&0&0&1&1&0&0&1&1&0&0&0\\
		\end{smallmatrix}\end{pmatrix}_{4\times 12}.
		$
	\end{center}

Now the row space of $\Delta$ is $Row\Delta=\langle(0, 0, 1, 1, 0, 1, 1, 0, 1, 1, 0, 0), (0, 1, 0, 0, 1, 1, 1, 1, 0, 0, 1, 0), $

\noindent$(1, 0, 0, 1, 1, 1, 1, 1, 1, 0, 0, 1), (0, 0, 0, 1, 1, 0, 0, 1, 1, 0, 0, 0)\rangle$.
	
 For all $\omega_i\in Row\Lambda$, we calculated its contribution to the Betti numbers in Table \ref{table:1}.
	
		\begin{longtable}{|c|c|c|c|}
			\caption{ $\beta^1(M(P,\delta))$. }
			\label{table:1}\\
			\hline
			$\omega_1$=(0, 0, 1, 1, 0, 1, 1, 0, 1, 1, 0, 0)&  $K_{\omega_1}$&\scalebox{0.085}[0.085]{\includegraphics {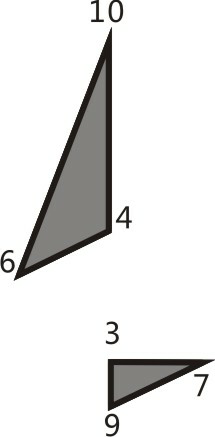}}  &  $\widetilde\beta^{0}(K_{\omega_1})=1$\\
			\hline
			$\omega_2$=(0, 1, 0, 0, 1, 1, 1, 1, 0, 0, 1, 0)& $K_{\omega_2}$&\scalebox{0.085}[0.085]{\includegraphics {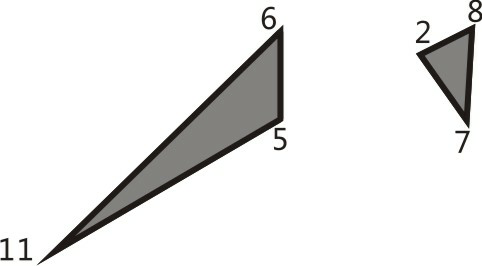}}  &$\widetilde\beta^{0}(K_{\omega_2})=1$\\
			\hline
			$\omega_3$=(1, 0, 0, 1, 1, 1, 1, 1, 1, 0, 0, 1)& $K_{\omega_3}$&\scalebox{0.085}[0.085]{\includegraphics {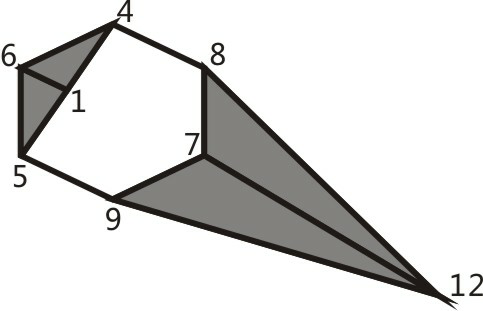}} & $\widetilde\beta^{0}(K_{\omega_3})=0$, $\beta^{1}(K_{\omega_3})=1$\\
			\hline
			$\omega_4$=(0, 0, 0, 1, 1, 0, 0, 1, 1, 0, 0, 0)& $K_{\omega_4}$&\scalebox{0.085}[0.085]{\includegraphics {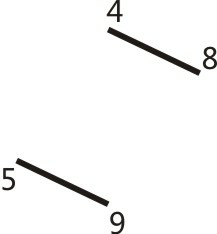}} &$\widetilde\beta^{0}(K_{\omega_4})=1$\\
			\hline
			$\omega_5$=(0, 1, 1, 1, 1, 0, 0, 1, 1, 1, 1, 0)& $K_{\omega_5}$&\scalebox{0.085}[0.085]{\includegraphics {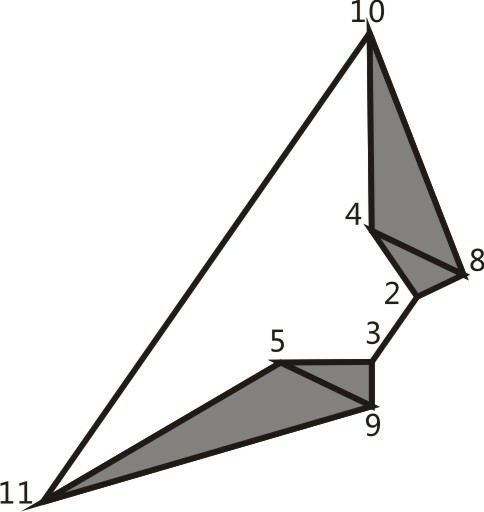}} &$\widetilde\beta^{0}(K_{\omega_5})=0$, $\beta^{1}(K_{\omega_5})=1$\\
			\hline
			$\omega_6$=(1, 0, 1, 0, 1, 0, 0, 1, 0, 1, 0, 1)& $K_{\omega_6}$&\scalebox{0.085}[0.085]{\includegraphics {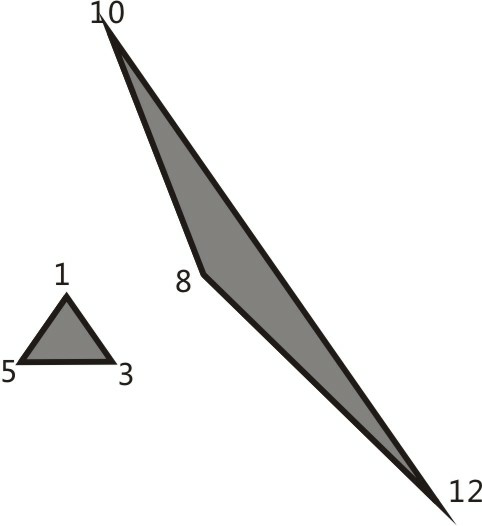}} & $\widetilde\beta^{0}(K_{\omega_6})=1$\\
			\hline
			$\omega_7$=(0, 0, 1, 0, 1, 1, 1, 1, 0, 1, 0, 0)& $K_{\omega_7}$&\scalebox{0.085}[0.085]{\includegraphics {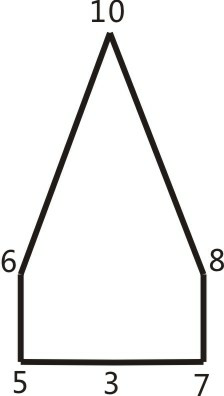}} &$\widetilde\beta^{0}(K_{\omega_7})=0$, $\beta^{1}(K_{\omega_7})=1$\\
			\hline
			$\omega_8$=(1, 1, 0, 1, 0, 0, 0, 0, 1, 0, 1, 1)& $K_{\omega_8}$&\scalebox{0.085}[0.085]{\includegraphics {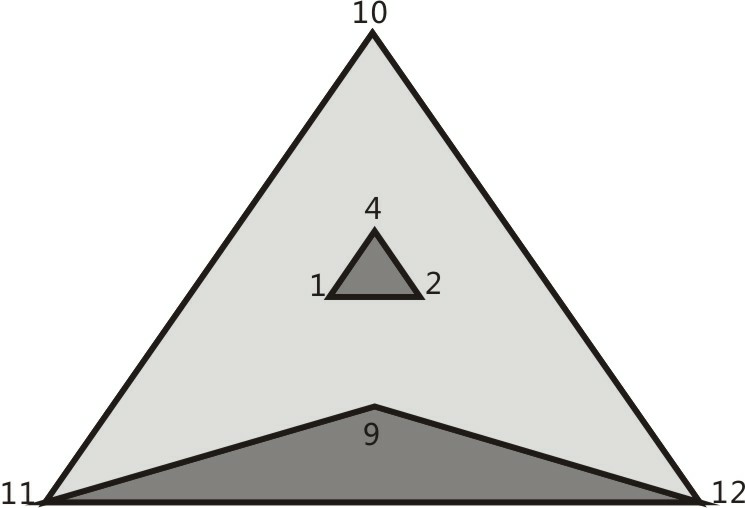}} &$\widetilde\beta^{0}(K_{\omega_8})=1$\\
			\hline
			$\omega_9$=(0, 1, 0, 1, 0, 1, 1, 0, 1, 0, 1, 0)& $K_{\omega_9}$&\scalebox{0.085}[0.085]{\includegraphics {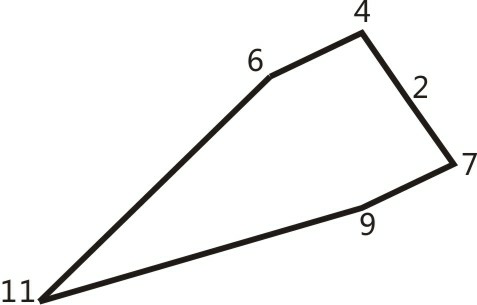}} &$\widetilde\beta^{0}(K_{\omega_9})=0$, $\beta^{1}(K_{\omega_9})=1$\\
			\hline
			$\omega_{10}$=(1, 0, 0, 0, 0, 1, 1, 0, 0, 0, 0, 1)& $K_{\omega_{10}}$&\scalebox{0.085}[0.085]{\includegraphics {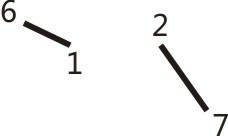}} &$\widetilde\beta^{0}(K_{\omega_{10}})=1$\\
			\hline
			$\omega_{11}$=(1, 1, 1, 0, 0, 1, 1, 0, 0, 1, 1, 1)& $K_{\omega_{11}}$&\scalebox{0.085}[0.085]{\includegraphics {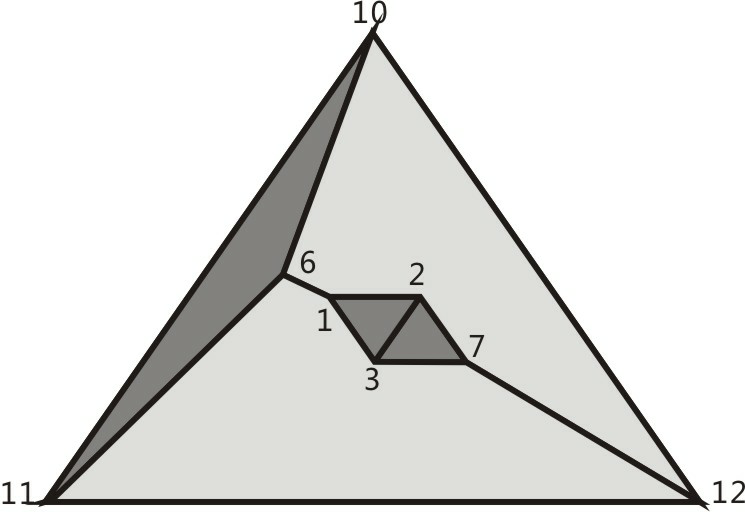}}& $\widetilde\beta^{0}(K_{\omega_{11}})=0$, $\beta^{1}(K_{\omega_{11}})=1$\\
			\hline
			$\omega_{12}$=(1, 1, 0, 0, 1, 0, 0, 1, 0, 0, 1, 1)& $K_{\omega_{12}}$&\scalebox{0.085}[0.085]{\includegraphics {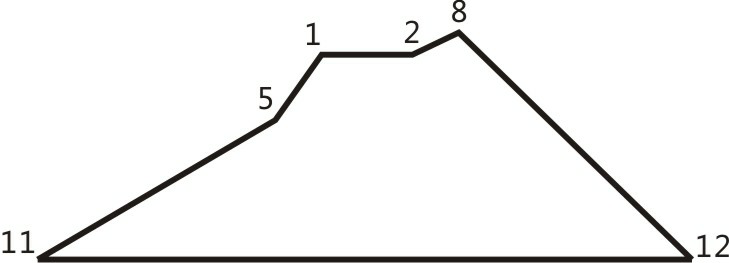}} &$\widetilde\beta^{0}(K_{\omega_{12}})=0$, $\beta^{1}(K_{\omega_{12}})=1$\\
			\hline
			$\omega_{13}$=(1, 0, 1, 1, 0, 0, 0, 0, 1, 1, 0, 1)& $K_{\omega_{13}}$&\scalebox{0.085}[0.085]{\includegraphics {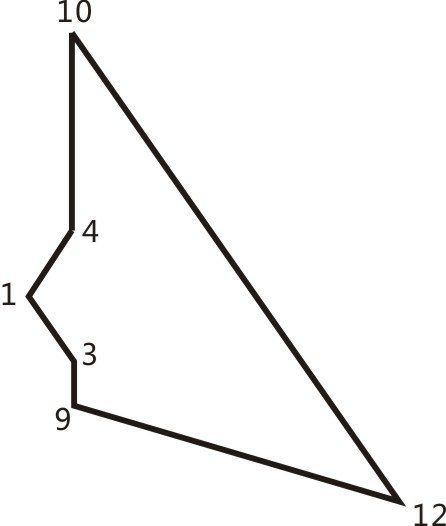}} &$\widetilde\beta^{0}(K_{\omega_{13}})=0$, $\beta^{1}(K_{\omega_{13}})=1$\\
			\hline
			$\omega_{14}$=(0, 1, 1, 0, 0, 0, 0, 0, 0, 1, 1, 0)& $K_{\omega_{14}}$&\scalebox{0.085}[0.085]{\includegraphics {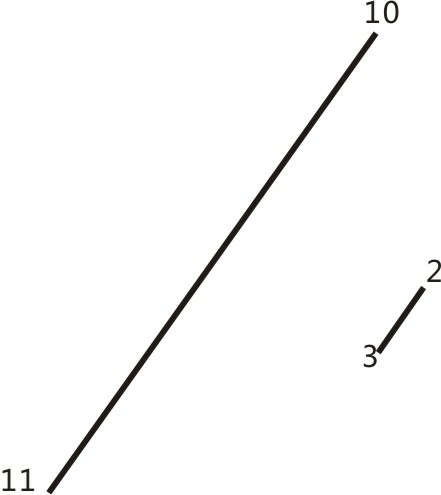}} &$\widetilde\beta^{0}(K_{\omega_{14}})=1$\\
			\hline
			$\omega_{15}$=(0, 0, 0, 0, 0, 0, 0, 0, 0, 0, 0, 0)&$K_{\omega_{15}}$ &$\emptyset$&no contribution to $\beta^1(M(P,\delta))$\\
			\hline
			$\omega_{16}$=(1, 1, 1, 1, 1, 1, 1, 1, 1, 1, 1, 1)& $K_{\omega_{16}}$&$\cong \mathbb{S}^2$ & $\widetilde\beta^{0}(K_{\omega_{16}})=\beta^{1}(K_{\omega_{16}})=0$, $\beta^{2}(K_{\omega_{16}})=1$ \\
			\hline
		\end{longtable}
		
	
From Table \ref{table:1} and Theorem \ref{theorem: ChoiP}, we have
\begin{center}
$\beta^1(M(P,\delta);Q)=\sum\limits_{i=1}^{16}{\widetilde\beta^0}(K_{\omega_i};\mathbb{Q})=\beta^2(M(P,\delta);\mathbb{Q})=\sum\limits_{i=1}^{16}{\widetilde\beta^1}(K_{\omega_i};\mathbb{Q})=7$.
\end{center}
\end{example}

For an orientable 3-manifold $M(nP,\delta)$, by the Poincar\'{e} Duality, we have $\beta^0(M(nP,\delta))=\beta^3(M(nP,\delta))=1$ and $\beta^1(M(nP,\delta))=\beta^2(M(nP,\delta))$. So $\beta^1$ is the only thing we need to calculate when detecting the free  part of $H^*(M(nP,\delta))$. By Theorem \ref{theorem: ChoiP}, $\beta^1(M(nP,\delta))$ equals to the sum of 16 reduced zero-th Betti numbers of 2-dimensional  subcomplexes $k_{\omega_i}$ of the simplicial complex $nK=(\partial(nP))^*$.

 Therefore the first Betti number $\beta^{1}(M(nP,\lambda))$ can be figured out by counting and summing up the numbers of connected components of all those subcomplexes. Here each subcomplex corresponds to a non-zero vector in the row space $row\Delta$ as shown in Example \ref{example:6}.


We perform a great number of calculations upon such process through the computer. By using \textbf{CT-tree} we are able to seek out a certain coloring family that change regularly and therefore construct a family of geometrically bounding 3-manifolds $M(nP,\delta)$. According to the one-to-one corresponding discussed in Section 2.3, constructing such a specific manifold actually means to work out the required characteristic function $\delta$ over the polytope $nP$.


\begin{definition} A \emph{downward pentagon coloring brick} ($d$-brick for short) is a $\mathbb{Z}_2^3$-colored figure with five pentagons glued as in Figure \ref{figure:2} (1). An \emph{upward pentagon coloring brick} ($u$-brick for short) is a $\mathbb{Z}_2^3$-colored figure with five pentagons glued as in Figure \ref{figure:2} (2). A \emph{hexagon  coloring brick} ($h$-brick for short) is a $\mathbb{Z}_2^3$-colored figure with five hexagons glued as in Figure \ref{figure:2} (3).  All the bricks can also be represented by an 5-length integer vector through binary transformation. We adopt a uniform format $S=(a_1~a_2~a_3~a_4~a_5)$ to denote a coloring brick. Here $a_1$, $a_2$, $a_3$, $a_4$ and $a_5$ are the five colors in $\mathbb{Z}_2^3$. A colored $nP$ consists of one $d$-brick, ($n-1$) many $h$-bricks and one $u$-brick successively.
\end{definition}
\begin{figure}[H]
		\scalebox{0.6}[0.6]{\includegraphics {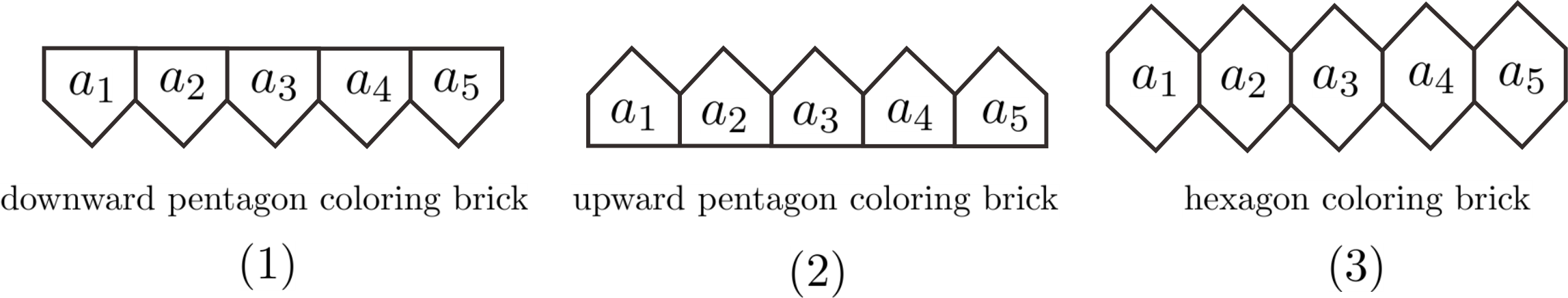}}
	 \caption{ Bricks.}\label{figure:2}
\end{figure}

In particular, in the case of a $\mathbb{Z}_2^3$-colored dodecahedron $P$,  there are only one downward pentagon coloring brick, one downward pentagon coloring brick and no hexagon coloring bricks. Moreover, we have shown before that there are 2155 $GL_3(\mathbb{Z}_2)$-equivalent classes of $\mathbb{Z}_2^3$-coloring on $P$ that can recovered as non-orientable manifolds. Then by adding the information of 120 permutation vectors that encode all the 120 elements information of dodecahedron $P$'s symmetry group $\mathbb{A}$, we further obtain DJ-equivalent classes through computer programming. We obtain exactly 24 non-orientable DJ-equivalent classes and one orientable DJ-equivalent class in  all the small covers over the  dodecahedron  $P$.   Such result also coincides with Corollary 3.4 of Garrison-Scott's paper \cite{Scott:02}.

\begin{definition}
A \emph{$\mathbb{Z}_2^3$-coloring} of the simple polytope $nP$ can be recorded by a coloring vector valued by facets' colors layer-by-layer and left-to-right. Here the writing format of a coloring vector is separating the first and last item and making items of every coloring brick written together.
\end{definition}

For example, $$c=[1~ 24246 ~37171 ~24246 ~1]$$ is a coloring vector of Figure \ref{figure:colororder} (1), whose characteristic vector $C$ is $$(1,~ 2,~ 4,~ 4,~ 2,~ 6,~ 1,~ 7,~ 7,~ 1,~ 3,~ 2,~ 4,~ 4,~ 2,~ 6,~ 1).$$
  Figure \ref{figure:colororder} (2) is the order required in Section 2.5. We can get the characteristic vector $C$ from a coloring vector simply by adjusting elements' positions according to the assigned order. For example, the 5-th facet is colored by (0, 1, 0), which is the binary form of 2, so the 5-th item of $C$ is 2.

\begin{figure}[H]
		\scalebox{0.7}[0.7]{\includegraphics {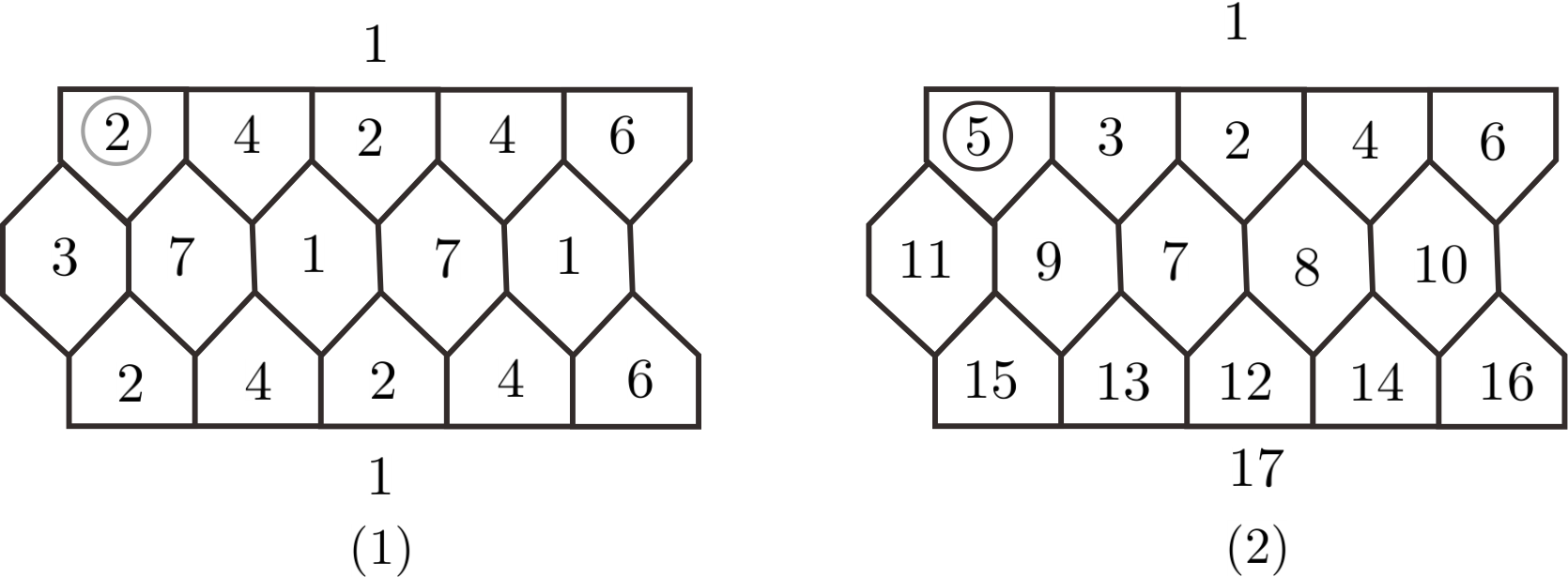}}
		\caption{The left is a coloring over the polytope $2P$ and the right is the facet order of $2P$.}
		\label{figure:colororder}
\end{figure}

\begin{definition}
Two $\mathbb{Z}_2^3$-coloring bricks are said to be \emph{compatible} if the \emph{non-singularity condition} is satisfied at all ten intersecting vertices. They form an \emph{compatible pair}. See Figure  \ref{figure:compatible} (1).
\end{definition}

\begin{definition}
A group of $\mathbb{Z}_2^3$-coloring bricks are \emph{pairwisely compatible} if every two of them are compatible after certain rotation. 
Rotation here means to rotate the colors of one brick for $\frac{2i\pi}{5}$ left, where $0\leq i\leq 4$. For example, $(44262)$ is obtain by rotating  $(24426)$  for $\frac{2\pi}{5}$ left.
\end{definition}

\begin{definition}
	The \emph{affix set} of a downward/upward pentagon $\mathbb{Z}_2^3$-coloring brick $S$, denoted by $\mathcal{A}(S)$, is the set of colorings in $\mathbb{Z}_2^3$ that are compatible with $S$. Namely the non-singularity condition holds at all five vertices as shown in (2) and (3) of Figure \ref{figure:compatible}.
\end{definition}

\begin{figure}[H]
	\scalebox{0.6}[0.6]{\includegraphics {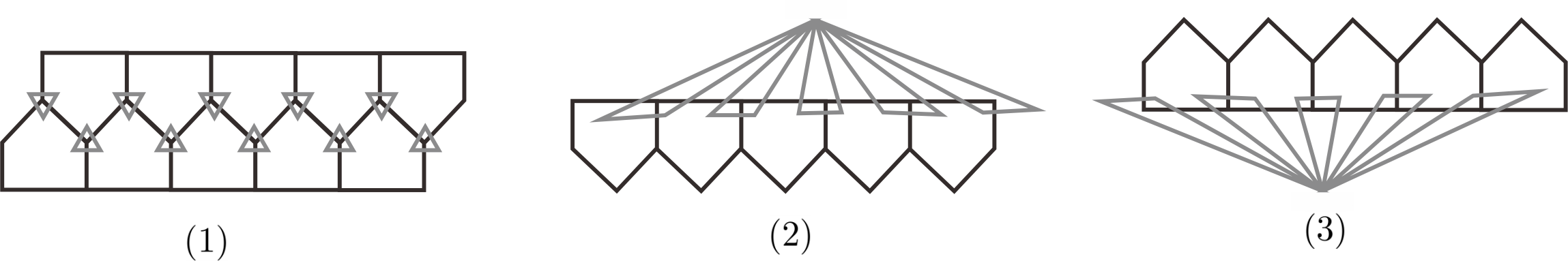}}
	\caption{Compatible pair.}
	\label{figure:compatible}
\end{figure}


Now we can prove the key lemma of this section:
\begin{lemma}  \label{lemma: key}
Let $n\in \mathbb{Z}_{+}$ be an  even number, there is a non-orientable $\mathbb{Z}_2^3$-coloring $\lambda$ over the polytope $nP$, such that for its natural associated  $\mathbb{Z}_2^4$-coloring extension $\delta$, we have $\beta^1 (M(nP, \delta))= n+1$.
\end{lemma}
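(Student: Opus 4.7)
The plan is to exhibit an explicit non-orientable $\mathbb{Z}_2^3$-coloring $\lambda$ on $nP$ built from a periodic brick pattern, and then apply Theorem \ref{theorem: ChoiP} to its unique orientable $\mathbb{Z}_2^4$-extension $\delta$ (which exists by Proposition \ref{remark:1}). Concretely, I would search among the admissible bricks for a triple $(D_0, H_0, U_0)$---one downward pentagon $d$-brick, one hexagon $h$-brick, and one upward pentagon $u$-brick---such that $H_0$ is compatible with a rotated copy of itself along a common hexagonal seam, $D_0$ is compatible with $H_0$, and $H_0$ is compatible with $U_0$. The candidate coloring vector is then $\lambda = [a_1 \; D_0 \; H_0 \; H_0 \; \cdots \; H_0 \; U_0 \; a_2]$ with $(n-1)$ copies of $H_0$ stacked between the two pentagonal caps. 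Non-orientability is ensured by arranging that at least one brick contains a color from $\{3,5,6\}$ and that no $GL_3(\mathbb{Z}_2)$ change of basis sends $\lambda$ into $\{e_1, e_2, e_3, e_1+e_2+e_3\}^{\mathcal{F}(nP)}$, which by Theorem \ref{theorem: NakayamaN} exactly means $M(nP,\lambda)$ is non-orientable. Once this holds, Proposition \ref{remark:1} produces a unique natural $\mathbb{Z}_2^4$-extension $\delta$, obtained by adjoining the fourth row forcing each column of the $4 \times (5n+7)$ matrix $\Delta$ to have sum $1 \pmod 2$.

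Next I would compute $\beta^1(M(nP,\delta))$ using Theorem \ref{theorem: ChoiP}: since the row space $\mathrm{row}\,\Delta$ has exactly $16$ elements and $M(nP,\delta)$ is an orientable closed $3$-manifold (so only $\beta^1$ contributes nontrivially among the non-top Betti numbers), one has
\[
\beta^1(M(nP,\delta)) = \sum_{\omega \in \mathrm{row}\,\Delta \setminus \{0\}} \widetilde{\beta}^{0}(nK_\omega).
\]
The point of the periodic brick layout is that each vector $\omega$ in $\mathrm{row}\,\Delta$ restricts to the same pattern on every interior hexagonal layer of $nP$, so the full subcomplex $nK_\omega$ admits a canonical decomposition into end pieces (coming from $D_0$, $U_0$ and the two pentagonal cap pentagons) glued along $(n-1)$ identical middle pieces (coming from $H_0$). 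One therefore only has to analyze a bounded number of small simplicial complexes to read off $\widetilde{\beta}^0(nK_\omega)$; by engineering $(D_0, H_0, U_0)$ one can arrange that exactly one orbit of nonzero $\omega$'s produces a subcomplex whose number of connected components grows like $n+2$ (contributing $n+1$ in total to the sum) while every other $\omega$ gives a connected $nK_\omega$ contributing $0$. Assuming $n$ even is used to fit the parity of one of the linear-in-$n$ contributions---typically reflecting whether the periodic pattern of components closes up or leaves a dangling odd component at the caps.

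The main obstacle is the combinatorial search for the bricks $(D_0, H_0, U_0)$: one must simultaneously satisfy local non-singularity at every pentagon-pentagon, pentagon-hexagon, and hexagon-hexagon seam, enforce global non-orientability, and achieve the precise connected-component count $n+2$ for exactly one row-space orbit. Self-compatibility of $H_0$ is the most restrictive constraint, cutting down the feasible hexagonal bricks dramatically; the \textbf{CT-tree} recursive algorithm from Section 2 is the natural tool to enumerate candidates, after which verification reduces to inspecting the $16$ subcomplexes $nK_\omega$ for a fixed stack and matching their topology against the formula $\beta^1 = n+1$. Once a concrete triple passing all these checks is exhibited (and this is where the explicit calculation, analogous to Example \ref{example:6}, takes place), the lemma follows.
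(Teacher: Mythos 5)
Your outline follows the same general method as the paper (periodic brick colorings, the unique orientable $\mathbb{Z}_2^4$-extension from Proposition \ref{remark:1}, and the Betti number count of Theorem \ref{theorem: ChoiP} via connected components of the subcomplexes $K_\omega$), but as it stands it is a plan rather than a proof: the lemma is an existence statement whose entire content is the exhibition of a concrete coloring and the verification of $\beta^1=n+1$, and both are missing. You never produce the triple $(D_0,H_0,U_0)$, never check compatibility or non-orientability for it, and the structural target you announce --- exactly one row-space orbit whose subcomplex has $n+2$ components while all other fourteen nonzero rows give connected subcomplexes --- is asserted ``by engineering'' without any evidence that such bricks exist. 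Note that the coloring actually used in the paper does \emph{not} realize your pattern: in Table \ref{table:14} three rows contribute a constant $1$ each and two rows each contribute $\tfrac{n}{2}-1$, summing to $n+1$; so the distribution of components you are banking on is not the one that is known to occur, and there is no argument that it can occur.

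Two further points would need repair even after a candidate is found. First, your stack uses a single hexagonal brick $H_0$ compatible with a \emph{rotated} copy of itself; if the required rotation is nontrivial, the restriction of a row $\omega$ to consecutive hexagonal layers is not literally the same pattern (it is cyclically shifted in the facet ordering), so your ``identical middle pieces'' decomposition and the resulting linear-in-$n$ count need a genuine periodicity argument (the paper instead repeats the two-brick pair $S_3S_1$ after the seed coloring $[aS_1S_3S_1a]$ on $2P$, computes $\beta^1=3,5,7$ for $n=2,4,6$ by the CT-tree, and then invokes the arithmetic-progression statement, Proposition \ref{prop:1}). Second, your explanation of the hypothesis that $n$ is even is off the mark: in the paper the parity enters structurally because the repeated unit is a \emph{pair} of layers, so the construction steps from $nP$ to $(n+2)P$ starting at $n=2$; in a genuinely period-one construction such as yours there would be no reason for the restriction at all, which is a sign that the even case and the odd case (treated separately in Section 5, where the analogous seed on $3P$ yields $\beta^1=n$) cannot both be captured by the scheme as you describe it. To complete the proof you must exhibit explicit bricks and affixes, verify non-orientability via Theorem \ref{theorem: NakayamaN}, carry out the component count for the base cases, and justify the extrapolation in $n$.
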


\begin{proof}
	
We start with the case $n=2$. The polytope $2P$ has five layers of facets: both the first and the fifth layer are pentagons; the second and the fourth layers consist of five pentagons respectively; and the third layer contains five hexagons.

Firstly, we construct three coloring bricks $S_1=(24247)$, $S_2=(65372)$, $S_3 =(35716)$. They are pairwisely compatible. Namely there are three compatible pairs:
\begin{gather}
S_1S_2=(24247 \  65372), \notag \\
S_1S_3=(24247 \ 35716), \notag \\
S_2S_3=(65372 \ 57163). \notag 
\end{gather}
These three coloring bricks appeal in all Lemma \ref{lemma: key}, Lemma \ref{lemma: second} and Lemma \ref{lemma: fifth}.
We still use $S_1$, $S_2$, $S_3$ to denote the coloring bricks even after being rotated. For example we keep $S_2S_3$ to represent $(65372 \ 57163)$ although the second brick $(57163)$ of this pair is actually what we rotate $S_3=(35716)$ by angle $\frac{2 \pi}{5}$ from left. But we will claim the elements clearly before adopting the abbreviation sign to avoid misunderstanding. The unified format of our claim is (65372 57163)=$S_2S_3\triangleq A$. Next we select out $a=1$, $b=1$ and $c=4$ as corresponding affix elements from the affix set of $S_1$, $S_2$ and  $S_3$ respectively. 

Next, we can use these coloring bricks and their affix elements to build up $\underbrace {\left( {\tbinom{3}{1}\cdot\tbinom{2}{1}\cdot\tbinom{2}{1} \cdots\tbinom{2}{1}} \right)}_{n+1}= 3\cdot 2^n$ many $\mathbb{Z}_2^3$-coloring functions over the polytope $nP$. The entries of coloring bricks may be adjusted by rotating in order to fit the facet structure of $nP$ as well as obey the adjacent relations required by the three compatible pairs. Denoting $$(35716 ~~24247)=S_3S_1\triangleq A_1.$$
 Now by [$a S_1 A_1a$] we mean the color polytope $2P$ as shown in Figure \ref{figure:4}.

\begin{figure}[H]
	\scalebox{0.73}[0.73]{\includegraphics {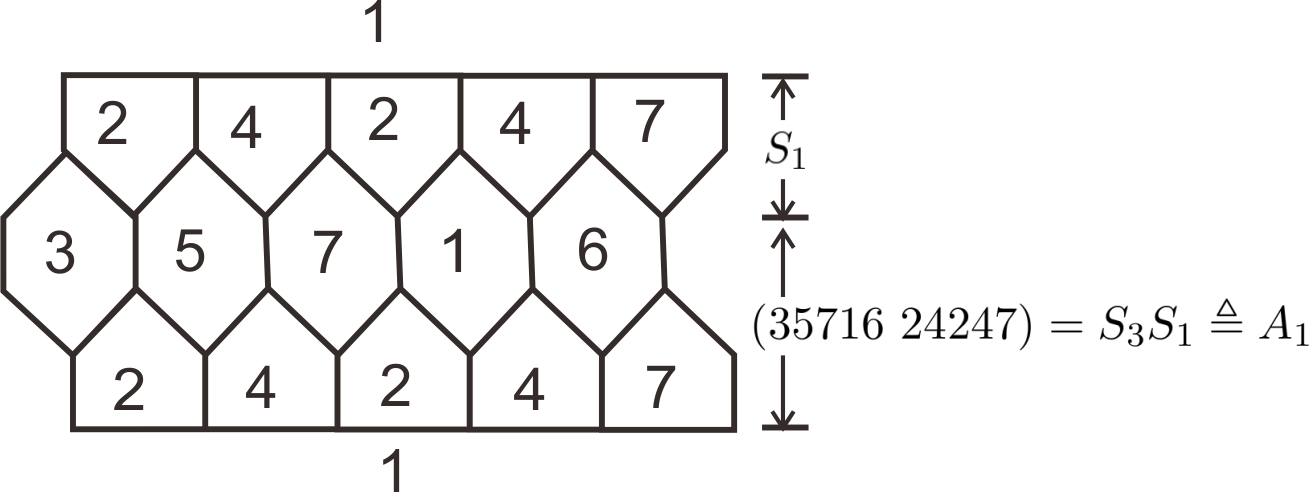}}
	\caption{Colored polytope $2P$.}
	\label{figure:4}
\end{figure}

Following the ordering manner required in Section 2.5, the adjacent matrix of $(\partial(2P))^*$ is:
\begin{center}$
	\begin{pmatrix}
	\begin{smallmatrix}
	1&\	1&\	1&\	1&\	1&\	1&\	0&\	0&\	0&\	0&\	0&\	0&\	0&\	0&\	0&\	0&\	0&\\
	1&\	1&\	1&\	1&\	0&\	0&\	1&\	1&\	0&\	0&\	0&\	0&\	0&\	0&\	0&\	0&\	0&\\
	1&\	1&\	1&\	0&\	1&\	0&\	1&\	0&\	1&\	0&\	0&\	0&\	0&\	0&\	0&\	0&\	0&\\
	1&\	1&\	0&\	1&\	0&\	1&\	0&\	1&\	0&\	1&\	0&\	0&\	0&\	0&\	0&\	0&\	0&\\
	1&\	0&\	1&\	0&\	1&\	1&\	0&\	0&\	1&\	0&\	1&\	0&\	0&\	0&\	0&\	0&\	0&\\
	1&\	0&\	0&\	1&\	1&\	1&\	0&\	0&\	0&\	1&\	1&\	0&\	0&\	0&\	0&\	0&\	0&\\
	0&\	1&\	1&\	0&\	0&\	0&\	1&\	1&\	1&\	0&\	0&\	1&\	1&\	0&\	0&\	0&\	0&\\
	0&\	1&\	0&\	1&\	0&\	0&\	1&\	1&\	0&\	1&\	0&\	1&\	0&\	1&\	0&\	0&\	0&\\
	0&\	0&\	1&\	0&\	1&\	0&\	1&\	0&\	1&\	0&\	1&\	0&\	1&\	0&\	1&\	0&\	0&\\
	0&\	0&\	0&\	1&\	0&\	1&\	0&\	1&\	0&\	1&\	1&\	0&\	0&\	1&\	0&\	1&\	0&\\
	0&\	0&\	0&\	0&\	1&\	1&\	0&\	0&\	1&\	1&\	1&\	0&\	0&\	0&\	1&\	1&\	0&\\
	0&\	0&\	0&\	0&\	0&\	0&\	1&\	1&\	0&\	0&\	0&\	1&\	1&\	1&\	0&\	0&\	1&\\
	0&\	0&\	0&\	0&\	0&\	0&\	1&\	0&\	1&\	0&\	0&\	1&\	1&\	0&\	1&\	0&\	1&\\
	0&\	0&\	0&\	0&\	0&\	0&\	0&\	1&\	0&\	1&\	0&\	1&\	0&\	1&\	0&\	1&\	1&\\
	0&\	0&\	0&\	0&\	0&\	0&\	0&\	0&\	1&\	0&\	1&\	0&\	1&\	0&\	1&\	1&\	1&\\
	0&\	0&\	0&\	0&\	0&\	0&\	0&\	0&\	0&\	1&\	1&\	0&\	0&\	1&\	1&\	1&\	1&\\
	0&\	0&\	0&\	0&\	0&\	0&\	0&\	0&\	0&\	0&\	0&\	1&\	1&\	1&\	1&\	1&\	1&\\
	\end{smallmatrix}
	\end{pmatrix}$
\end{center}

 Because the  labels of facets and the adjacent relations go with the rotation, parts of corresponding coloring vectors are the same if they yield to the same brick. And here the characteristic vector $C$ of the colored polytope  $2P$ is $$(1,~ 2,~ 4,~ 4,~ 2,~ 7,~ 7,~ 1,~ 5,~ 6,~ 3,~ 2,~ 4,~ 4,~ 2,~ 7,~ 1).$$

We write $\lambda$ the characteristic function of $C$. By Theorem \ref{theorem: NakayamaN} and Remark \ref{remark:2}, it's easy to prove that $\lambda$ is admissible and the characteristic matrix $\Delta$ of its uniquely orientable extension is:
\begin{equation}\label{matrix:1}
\begin{pmatrix}\begin{smallmatrix}
0&\	0&\	1&\	1&\	0&\	1&\	1&\	0&\	1&\	1&\	0&\	0&\	1&\	1&\	0&\	1&\	0&\\
0&\	1&\	0&\	0&\	1&\	1&\	1&\	0&\	0&\	1&\	1&\	1&\	0&\	0&\	1&\	1&\	0&\\
1&\	0&\	0&\	0&\	0&\	1&\	1&\	1&\	1&\	0&\	1&\	0&\	0&\	0&\	0&\	1&\	1&\\
0&\	0&\	0&\	0&\	0&\	0&\	0&\	0&\	1&\	1&\	1&\	0&\	0&\	0&\	0&\	0&\	0&\\
\end{smallmatrix}\end{pmatrix}. 
\end{equation}

That is,  $M(2P, \lambda)$ is non-orientable and  $M(2P, \delta)$ is orientable, where $\delta$ is the natural $Z_2^4$-extension of $\lambda$.

Then the row space $row\Delta$, spanned by the four row vectors above, consists of 15 non-zero  row vectors as below. In the following we always mean a $(2^4-1)\times(5n+7)$ matrix, where $n$ is from $nP$, when we refer to the row space of the characteristic matrix of a $\mathbb{Z}_2^4$-coloring $\delta$ over $nP$. Here Matrix (\ref{matrix:2}) is the row space of Matrix (\ref{matrix:1}).
	\begin{equation}\label{matrix:2}
	\begin{pmatrix}
	\begin{smallmatrix}
	0&\	0&\	1&\	1&\	0&\	1&\	1&\	0&\	1&\	1&\	0&\	0&\	1&\	1&\	0&\	1&\	0&\\
	0&\	1&\	0&\	0&\	1&\	1&\	1&\	0&\	0&\	1&\	1&\	1&\	0&\	0&\	1&\	1&\	0&\\
	1&\	0&\	0&\	0&\	0&\	1&\	1&\	1&\	1&\	0&\	1&\	0&\	0&\	0&\	0&\	1&\	1&\\
	0&\	0&\	0&\	0&\	0&\	0&\	0&\	0&\	1&\	1&\	1&\	0&\	0&\	0&\	0&\	0&\	0&\\
	0&\	1&\	1&\	1&\	1&\	0&\	0&\	0&\	1&\	0&\	1&\	1&\	1&\	1&\	1&\	0&\	0&\\
	1&\	0&\	1&\	1&\	0&\	0&\	0&\	1&\	0&\	1&\	1&\	0&\	1&\	1&\	0&\	0&\	1&\\
	0&\	0&\	1&\	1&\	0&\	1&\	1&\	0&\	0&\	0&\	1&\	0&\	1&\	1&\	0&\	1&\	0&\\
	1&\	1&\	0&\	0&\	1&\	0&\	0&\	1&\	1&\	1&\	0&\	1&\	0&\	0&\	1&\	0&\ 1&\\
	0&\	1&\	0&\	0&\	1&\	1&\	1&\	0&\	1&\	0&\	0&\	1&\	0&\	0&\	1&\	1&\	0&\\
	1&\	0&\	0&\	0&\	0&\	1&\	1&\	1&\	0&\	1&\	0&\	0&\	0&\	0&\	0&\	1&\	1&\\
	1&\	1&\	1&\	1&\	1&\	1&\	1&\	1&\	0&\	0&\	0&\	1&\	1&\	1&\	1&\	1&\	1&\\
	1&\	1&\	0&\	0&\	1&\	0&\	0&\	1&\	0&\	0&\	1&\	1&\	0&\	0&\	1&\	0&\	1&\\
	1&\	0&\	1&\	1&\	0&\	0&\	0&\	1&\	1&\	0&\	0&\	0&\	1&\	1&\	0&\	0&\	1&\\
	0&\	1&\	1&\	1&\	1&\	0&\	0&\	0&\	0&\	1&\	0&\	1&\	1&\	1&\	1&\	0&\	0&\\
	1&\	1&\	1&\	1&\	1&\	1&\	1&\	1&\	1&\	1&\	1&\	1&\	1&\	1&\	1&\	1&\	1&\\
	\end{smallmatrix}
	\end{pmatrix}
	\end{equation}

By Theorem \ref{theorem: ChoiP}, we can calculate out $\beta^1(M(2P,\delta))$ through the $\widetilde{\beta^0}$=$\widetilde{\beta_0}$ of its 15 non-empty full-subcomplices $K_{\omega}$. The reduced zero-th Betti number of  each $K_{\omega}$ actually equals to $K_{\omega}$'s connected component number minus one.  We define subset $\omega^{(*)}=\{1,~2,~...~,d\}\subset[m]$ to be a \emph{connected index component} if $V=\{v_1,~v_2,~,...,~v_d\}$ is the vertex set of a connected component of $K_{\omega}$. It can be observed that $\omega^{(*)}$ is a connected index component of $K_{\omega}$ if and only if, for all $i_1, i_2 \in \omega^{(*)}$, we can always find some elements $j_1, j_2, ..., j_s \in \omega^{(*)}$ such that $$a_{i_1 j_1}=a_{j_1 j_2}=...=a_{j_{s-1} j_s}=a_{j_s i_2}=1,$$
where $a_{ij} \in X({K_\omega}$) and $X({K_\omega})$ is the matrix as defined in Section 2.5.

In order to make things more concise, we use the language of CT-Tree to illustrate our proof. For every $i$-th row $\omega_i=(w_{i1},...,w_{ij},...,w_{im})$ of the row space $row\Delta$, where $m=5n+7$ is the number of facets of $nP$ and $1\le i\le 2^4-1$. We define the following indices:

$\omega^1_i:= \{j~ \arrowvert~  1\leqslant j\leqslant m ~ {\rm and} ~ c_{ij}=1, ~ {\rm where} ~ c_{ij}\in row \Delta \}$ with cardinality  $l_i$;

$\omega^0_i:= \{1,2,...,m\} - {\omega^1_i}$.

For the  adjacent matrix $X(nP)$ of $nP$, we use $X(nP, \omega_i)$ to denote the  $l_i\times l_i$ complement minor after excluding all $q$-th rows and $q$-th columns from $X(nP)$ where $q\in \omega^0_i$. We call $b(i):=\{j \arrowvert j>i ~ {\rm and} ~  a_{ij}=1 ,~ {\rm where} ~ a_{ij}\in X(nP, \omega_i)\}$ as \textbf{$i$-band} or \textbf{band of $i$}, and $b(i)$ is a subset of $[m]$.

If we consider the row space  $row\Lambda$ as shown in  (3.2). Then:
\begin{gather}
\omega_1=(0,~ 0,~ 1,~ 1,~ 0,~ 1,~ 1,~ 0,~ 1,~ 1,~ 0,~ 0,~ 1,~ 1,~ 0,~ 1,~ 0), \notag \\
\omega^1_1=\{3,~ 4,~ 6,~ 7,~ 9,~ 10,~ 13,~ 14,~ 16\}, \notag \\
\omega^0_1=\{1,~ 2,~ 5,~ 8,~ 11,~ 12,~ 15,~ 17\}. \notag 
\end{gather}
Now we have
\begin{center}
	$X(nP, \omega_1)=${\footnotesize$\bordermatrix[()]{%
			&3& 4 & 6 & 7 &9  & 10& 13&	14&	16 \cr
			&1&	0&	0&	1&	1&	0&	0&	0&	0\cr
			&0&	1&	1&	0&	0&	1&	0&	0&	0\cr
			&0&	1&	1&	0&	0&	1&	0&	0&	0\cr
			&1&	0&	0&	1&	1&	0&	1&	0&	0\cr
			&1&	0&	0&	1&	1&	0&	1&	0&	0\cr
			&0&	1&	1&	0&	0&	1&	0&	1&	1\cr
			&0&	0&	0&	1&	1&	0&	1&	0&	0\cr
			&0&	0&	0&	0&	0&	1&	0&	1&	1\cr
			&0&	0&	0&	0&	0&	1&	0&	1&	1\cr
		}$},
\end{center}
where the column-name row is exactly $\omega_1^1$.

In order to figure out the $\widetilde\beta^0$ of the full subcomplex that corresponds to $\omega_1$, we apply a \textbf{Connect-Trace Tree(CT-Tree) procedure} to $X(nP, \omega_1)$.

Assuming $\omega^1_1=\{i_1, i_2, ..., i_l\vert i_1<i_2<...<i_l\}$, then
$$
X(nP, \omega_1)=\begin{pmatrix}
a_{i_1{i_1}}& a_{i_1{i_2}} & \cdots & a_{i_1{i_l}} \\
a_{i_2{i_1}}& a_{i_2{i_2}} & \cdots & a_{i_2{i_l}}\\
\vdots & \vdots & \ddots & \vdots\\
a_{i_l{i_1}}& a_{i_l{i_2}} & \cdots & a_{i_l{i_l}}\\
\end{pmatrix}.
$$

Initially, we pick the first element $i_1$ in $\omega^1_1$ and arrange the information into Table \ref{table:2} as shown below. Then we get the 1st-branch of the CT-Tree of this round concerning $\omega_1$. And here we detail the elements of $\{i_j~\arrowvert~ i_j>i_1, \ a_{i_1{j_1}}=1, \ a_{i_1{j_1}}\in X(nP, \omega_1)\}$ by $\{i_{11}, i_{12}, ..., i_{1{l(i_1)}}\}$.

\begin{table}[H]

	\begin{tabular}{c|p{5cm}|c}
		\hline
		lead$(i_j)$ & $i_j$-band $b(i_j)$ & branch structure\\
		\hline
		$i_1$ & $i_{11}, i_{12}, ..., i_{1{l(i_1)}}$ & 1-st branch \\
		\hline
		\end{tabular}
	    \caption{The 1-th branch.}
	    \label{table:2}
\end{table}

Conducting the same operation to every $i$-th row of $X(nP, \omega_i)$, we generate the 2nd-branch of the CT-tree as shown in Table \ref{table:3}, which actually  means placing the $b(i_j)$ row by row where $i_j \in \{i_{11}, i_{12}, ..., i_{1{l(i_1)}}\}$.

\begin{table}[H]
	\begin{tabular}{c|p{5cm}|c|c}
		\hline
		lead$(i_j)$ &\multicolumn{2}{|c|}{$i_j$-band $b(i_j)$}& branch structure\\
		\hline
		$i_1$ & $i_{11},i_{12} ,...,i_{1{l(i_1)}}$ & $b(i_1)$&1st-branch\\
		\hline
		$i_{11}$ & $ i_{111},i_{112} ,...,i_{11{l(i_{11})}}$&$b(i_{11})$& 2nd- branch\\
		
		\vdots & \vdots & &\\
		$i_{1{l(i_1)}}$ & $ i_{1{l(i_1)}1} ,...,i_{1{l(i_1)}{l(i_{1{l{(i_1)}})})}}$&$b(i_{1l(i_i)})$&\\
		\hline
	\end{tabular}
	\caption{The 1st and 2nd branches.}
	\label{table:3}
\end{table}

For example, as to $\omega_1=(0,~ 0,~ 1,~ 1,~ 0,~ 1,~ 1,~ 0,~ 1,~ 1,~ 0,~ 0,~ 1,~ 1,~ 0,~ 1,~ 0),$ the first row in Matrix (\ref{matrix:2}), the first and the second branches are shown in Table  \ref{table:4}:

 \begin{table}[H]
 	\begin{tabular}{c|p{5cm}|c}
 		\hline
 		lead$(i_j)$ & $i_j$-band $b(i_j)$ & branch structure\\
 		\hline
 		3 & 7, 9 & 1st-branch \\
 		\hline
 		7&  9, 13& 2nd-branch \\
 		9& 13&\\
 		\hline
 	\end{tabular}
  	\caption{The 1st and 2nd branches of the first round CT-tree of $\omega_1$.} \label{table:4}
 \end{table}

The second branch is obtained by \textbf{branching a row}. There is only one single band $b(i_1)=\{i_{11}, i_{12},  ..., i_{1{l(i_1)}}\}$ in the first branch. And the  band part of the second branch consists of all the bands of elements of that row, that is ($b(i_{11})$, ... , $b(i_{1l(i_1)})$). We use band as a unit to depict the shelf structure within a branch. There are  two bands in the 2nd-branch of $\omega_1$ as shown in Table \ref{table:4}. Moreover, the operation of \textbf{u-branching a band} means cancelling the repeating elements when branching a band and leaving only the one in the top-left position. Then we can define the  operation of \textbf{u-branching a branch}, which resulting from u-branching every band of a branch and later place them row by row into the table. Thus we can get the $(n+1)$-th branch $(n>2)$ by u-branching the $n$-th branch. We generate the CT-tree by u-branching until we reach a branch whose band part is empty after removing the elements that are identical with those of the corresponding lead part. Such branch is called a \textbf{cadence branch}. Denoting the last lead as $i_q$. Then we have a lead vector of CT-tree $\omega^{(1)}_1:= (i_1,\ldots, i_q)$. It can be shown that $\omega_1^{(1)}$ is a connected index component. Namely we have the following proposition.

\begin{proposition}
The full-subcomplex corresponds to $\omega^{(1)}_1$ is exactly a connected component of $K_{\omega_1}$, where $K_{\omega_1}$ is the subcomplex of $K$ by restricting to $\omega_1^{(1)}\subset [m]$.
\end{proposition}
\begin{proof}

By the definition of the connected index component and the way of picking elements for $\omega^{(1)}_1$, we know that all the entries of $\omega^{(1)}_1$ are contained in the same connected index component. Ending with cadence branch ensuring the maximality.
\end{proof}

Setting  $\omega^1_{(1)}$= $\omega^1_1\diagdown \omega^{(1)}_1$ and applying the above CT-tree procedure to $\omega^1_{(1)}$, we then find the second connected component $\omega^{(2)}_1$ of $K_{\omega_1}$. Again setting  $\omega^1_{(2)}$= $\omega^1_{(1)}\diagdown \omega^{(2)}_1$ and applying the above CT-tree procedure to $\omega^1_{(2)}$.

Continuing this procedure until we reach  a $t \in \mathbb{Z}_{\geq 1}$, such that $\omega^1_{(t)}
$ is empty. Then we can conclude that there are exactly $t$ connected components in the subcomplex  $K_{\omega_1}$.
\begin{table}[H]
	\begin{floatrow}
		\capbtabbox{
			\begin{tabular}{c|c|c}
				\hline
				lead$(i_j)$ & $b(i_j)$ & branch structure\\
				\hline
				3 & 7, 9 & 1st-branch \\
				\hline
				7&  9, 13& 2nd-branch\\
				9& &\\
				\hline
				13& & 3th(cadence)-branch\\
				\hline	
			\end{tabular}
		}{
		\caption{Tracing the 1st connected component of $K_{\omega_1}$ corresponding to $\omega^{(1)}_1$.}
		\label{table:5}
	}
	\capbtabbox{
		\begin{tabular}{c|c|c}
			\hline
			lead$(i_j)$ & $b(i_j)$ & branch structure\\
			\hline
			4 & 6, 10 & 1st-branch\\
			6& &\\
			\hline
			10&14, 16&2nd-branch\\
			\hline
			14 & 16 & 3th(cadence)-branch\\
			16& &\\
			\hline
		\end{tabular}
	}{
	\caption{Tracing the 2nd connected component of $K_{\omega_1}$ corresponding to $\omega^{(2)}_1$.}
	\label{table:6}
}
\end{floatrow}
\end{table}
For example, as to $\omega_1=(0,~ 0,~ 1,~ 1,~ 0,~1,~1,~ 0,~ 1,~ 1,~ 0,~ 0,~ 1,~ 1,~ 0,~ 1,~ 0)$, the first row in Matrix (\ref{matrix:2}), $t$ equals to 2. That means there are exactly 2 connected components, where $\omega^{(1)}_1=(3,~ 7,~ 9,~ 13)$ and $\omega^{(2)}_1=(4,~ 6,~ 10,~ 14,~ 16)$, so $\widetilde\beta^0(K_{\omega_1})=1$. These  CT-trees are shown in Table \ref{table:5} and Table \ref{table:6}.

By this method, we can calculate out that $\beta^1(M(2P,\delta))$=3 as shown in Table \ref{table:7} and thus finish the proof of Lemma  \ref{lemma: key} in the case $n=2$.

 \begin{table}[H]
  	\begin{tabular}{|c|c|c|c|c|c|c|c|c|c|c|c|c|c|c|c|c|}
 		\hline
 		$i$-th subcomplex&1&2&3&4&5&6&7&8&9&10&11&12&13&14&15&$total~~\beta^1$\\
 		\hline
 		$\widetilde\beta^0(i)$&1&1&0&0&0&0&1&0&0&0&0&0&0&0&0&3\\
 		\hline
 	\end{tabular}
 	\caption{$\beta^1$ of $M(2P,\delta)$ }
 	\label{table:7}
 \end{table}
The adjacent matrix of $(\partial(nP))^*$ changes regularly as $n$ increases. Here  ``regularly" results from the same adjacency changing pattern upon the same parity. Three  ordered polytopes $2P$, $4P$ and $6P$ are shown in Figure \ref{figure:5}. The adjacent manner of the 17th facet of the polytope $2P$ is the only one that is different from the same position of the polytope $4P$, while the facets from 17th to 27th of the polytope $4P$ follow the same order manner of facets from 7th to 17th of the polytope $2P$ by simply plus 10. The same changing pattern makes sense when we compare the adjacency patterns between the polytopes $4P$ and $6P$. Which means the adjacent manner of the 27th facet of the polytope $4P$ is the only one that is different from the same position of the polytope $6P$, while the facets from 27th to 37th of the polytope $6P$ follow  the same order manner of facets from 17th to 27th of the polytope $4P$ by simply plus 10. Generally speaking, the adjacent manner of the $(5n+7)$th facet of the polytope $nP$ is the only one that different from the same position of the polytope  $(n+2)P$ while facets from $(5n+7)$th to $(5(n+2)+7)$th of the polytope $(n+2)P$ are following the order manner of facets from $(5(n-2)+7)$th to $(5n+7)$th of the polytope $nP$ by simply plus 10.
\begin{figure}[H]
	\scalebox{0.75}[0.75]{\includegraphics {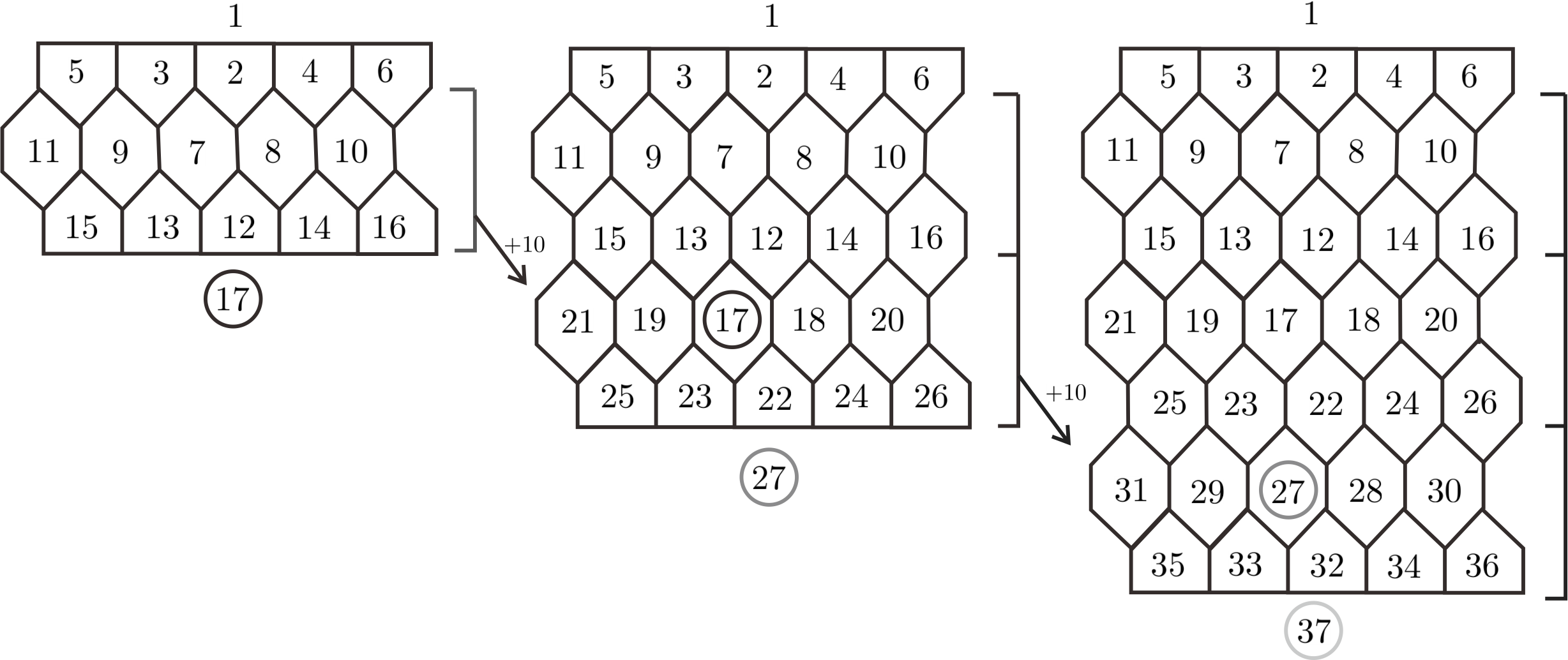}}
	\caption{Orderings of  polytopes $2P$, $4P$ and $6P$.}\label{figure:5}
\end{figure}
From the procedure of the  Betti number calculation, we can see the first Betti number  of $M(nP,\delta)$ increases steadily if $\widetilde\beta^0$ of each full-subcomplex increases in a certain way. The number of connected components of each full-subcomplex is determined by the selected vertices and their adjacent patterns, which are set by characteristic vector and adjacent matrix respectively. From the previous paragraph we can see the adjacent matrices change regularly from $nP$ to $(n+2)P$. Then if the last two coloring bricks of $(n+2)P$ are actually the copy of the last two bricks of $nP$, the 0-skeleton is deemed to increase by constant when changes between successive steps remain the same, like that from $2P$ to $4P$ and from $4P$ to $6P$. Such statement can be easily proved by CT-trees.

For example, we now have colored polytopes $2P$, $4P$ and $6P$ with coloring vectors
\begin{gather}
[aS_1S_3S_1a], \notag \\
[aS_1S_3S_1S_3S_1a]\triangleq[aS_1S_3S_1a]+\{S_3S_1\}, \notag \\
[aS_1S_3S_1S_3S_1S_3S_1a]\triangleq[aS_1S_3S_1a]+2\cdot\{S_3S_1\}. \notag 
\end{gather}
\noindent Their characteristic functions are named as $\lambda=\lambda^0$, $\lambda^1$ and $\lambda^2$ respectively, where the upper index represent how many times are the last two bricks $\{31\}$ of the  coloring $[a1\{31\}a]$ of $\lambda$ being repeated. The non-orientability of these $\mathbb{Z}_2^3$-colorings can be checked by Theorem \ref{theorem: NakayamaN}. Moreover, we can get their natural  $\mathbb{Z}_2^4$-extensions $\delta=\delta^0$, $\delta^1$ and $\delta^2$ respectively through Proposition \ref{remark:1}. That is $M(2P, \delta)$, $M(4P, \delta^1)$ and $M(6P, \delta^2)$ are the orientable double covers of the non-orientable manifolds $M(2P, \lambda)$, $M(4P, \lambda^1)$ and $M(6P, \lambda^2)$ respectively.

 By the previous methodology we can  calculate out that $\beta^1(M(4P,\delta^1))$=5 as shown in Table \ref{table:8}. And the $\beta^0$ of the first subcomplex is 2 by CT-tree process as displayed in Table \ref{table:9} and Table \ref{table:10}.

\begin{table}[H]
	\caption{$\beta^1$ of $M(4P,\delta^1)$.}
	\begin{tabular}{|c|c|c|c|c|c|c|c|c|c|c|c|c|c|c|c|}
		\hline
		1&2&3&4&5&6&7&8&9&10&11&12&13&14&15&$total ~~ \beta^1$\\
		\hline
		1&1&0&1&0&1&1&0&0&0&0&0&0&0&0&5\\
		\hline
	\end{tabular}
	\label{table:8}
\end{table}

\begin{table}[H]
	\begin{floatrow}
		\capbtabbox{
			\begin{tabular}{c|c|c}
				\hline
				lead$(i_j)$ & $b(i_j)$ & branch structure\\
				\hline
				3 & 7, 9 & 1st-branch \\
				\hline
				7&  9, 13& 2nd-branch\\
				9& &\\
				\hline
				13 & 17, 19 & 3th-branch \\
				\hline
				17&  19, 23& 4th-branch\\
				19& &\\
				\hline
				23& & 5th(cadence)-branch\\
				\hline	
			\end{tabular}
		}{
		\caption{Tracing the 1st connected component of $K_{\omega_1}$ with respect to $\delta^1$.}
		\label{table:9}
	}
	\capbtabbox{
		\begin{tabular}{c|c|c}
			\hline
			lead$(i_j)$ & $b(i_j)$ & branch structure\\
			\hline
			4 & 6, 10 & 1st-branch\\
			6& &\\
			\hline
			10&14, 16&2nd-branch\\
			\hline
			14 & 16, 20 & 3th-branch\\
			16& &\\
			\hline
			20&24, 26&4th-branch\\
			\hline
			24 & 26& 5th-(cadence)-branch\\
			26& &\\
			\hline
		\end{tabular}
	}{
	\caption{Tracing the 2nd connected component of $K_{\omega_1}$ with respect to $\delta^1$.}
	\label{table:10}
}
\end{floatrow}
\end{table}

Similarly we can calculate out that $\beta^1(M(6P,\delta^2))=7$ with details in Table \ref{table:11}. And the $\beta^0$ of the first subcomplex is 2 by CT-trees as shown in Table \ref{table:12} and Table \ref{table:13}.

\begin{table}[H]
	\begin{tabular}{|c|c|c|c|c|c|c|c|c|c|c|c|c|c|c|c|}
		\hline
		1&2&3&4&5&6&7&8&9&10&11&12&13&14&15&$total~~\beta^1$\\
		\hline
		1&1&0&2&0&2&1&0&0&0&0&0&0&0&0&7\\
		\hline
	\end{tabular}
	\caption{$\beta^1$ of $M(6P,\delta^2)$.}
	\label{table:11}
\end{table}

\begin{table}[H]
	\begin{floatrow}
		\capbtabbox{
			\begin{tabular}{c|c|c}
				\hline
				lead$(i_j)$ & $b(i_j)$ & branch structure\\
				\hline
				3 & 7, 9 & 1st-branch \\
				\hline
				7&  9, 13& 2nd-branch\\
				9& &\\
				\hline
				13& 17, 19& 3th-branch\\
				\hline
				17&  19, 23& 4th-branch\\
				19& &\\	
				\hline
				23& 27, 29& 5th-branch\\				
				\hline
				27& 29, 33& 6th-branch\\
				29& &\\	
				\hline
				33& & 7th(cadence)-branch \\	
				\hline	
			\end{tabular}
		}{
		\caption{Tracing the 1st connected component of $K_{\omega_1}$ with respect to $\delta^2$.}
		\label{table:12}
	}
	\capbtabbox{
		\begin{tabular}{c|c|c}
			\hline
			lead$(i_j)$ & $b(i_j)$ & branch structure\\
			\hline
			4 & 6, 10 & 1st-branch\\
			6& &\\
			\hline
			10&14, 16&2nd-branch\\
			\hline
			14 & 16, 20 & 3th-branch\\
			16& &\\
			\hline
			20&24, 26&4th-branch\\
			\hline
			24 & 26, 30 & 5th-branch\\
			26& &\\
			\hline
			30&34, 36&6th-branch\\
			\hline
			34 & 36 & 7th(cadence)-branch \\
			36& &\\
			\hline
		\end{tabular}
	}{\caption{Tracing the 2nd connected component of $K_{\omega_1}$ with respect to $\delta^2$.}
	\label{table:13}
}
\end{floatrow}
\end{table}

Thereupon we have
 $$\beta^1(M(4P, \delta^1))-\beta^1(M(2P, \delta))= \beta^1(M(6P,\delta^2))-\beta^1(M(4P, \delta^1)).$$
 Then it can be inferred that for every $j\in \mathbb{Z}_+$,
 $$\beta^1(M((2j)P, \delta^{j-1}))=\beta^1(M(2P,\delta))+(j-1)(\beta^1(M(4P, \delta^1))-\beta^1(M(2P, \delta^0))).$$
 It means that the Betti number sequence follows an arithmetic progression if the first three numbers lie in an arithmetic progression. We only need to determine the first item and the common difference of such a sequence. The above analysis can be concluded into the following proposition:

\begin{proposition} \label{prop:1}
 Given a $\mathbb{Z}_2^4$-coloring $\delta$ over the polytope  $nP$. Then for an arbitrary even number $s\geqslant n$, if
 $$\beta^1(M((n+2)P, \delta^{(1)}))-\beta^1(M(nP, \delta))= \beta^1(M((n+4)P,\delta^{(2)}))-\beta^1(M((n+2)P, \delta^{(1)})),$$
then we have
$$\beta^1(M(sP, \delta^{(\frac{s-n}{2})}))=\beta^1(M(nP,\delta))+(\frac{s-n}{2})(\beta^1(M(n+1)P,\delta^1)-\beta^1(M(nP, \delta))).$$
where $\delta^{(t)}$ represents a $\mathbb{Z}_2^4$-coloring over the polytope $(n+2t)P$. The coloring vector of $\delta^{(t)}$ is obtained by duplicating the last two bricks of $\delta$ for $t$ times.
\end{proposition}

Followed by Proposition \ref{prop:1} as well as the calculation result of $\beta^1(M(4P,\delta^1))$ and $\beta^1(M(6P,\delta^2))$, we can come up with Table \ref{table:14}.

\begin{table}[H]
	\begin{tabular}{|c|c|c|c|c|c|}
		\hline
		& $n=2$ & $n=4$ & $n=6$ & $\cdots$ & $n=2a$,  $a\in \mathbb{Z}_{> 0}$\\
		\hline
		1 & 1 & 1 & 1 & $\cdots$ &1\\
		\hline
		2 & 1 & 1 & 1 & $\cdots$ &1\\
		\hline
		3 & 0 & 0 & 0 & $\cdots$ &0\\
		\hline
		4 & 0 & 1 & 2 & $\cdots$ & $a-1$\\
		\hline
		5 & 0 & 0 & 0 & $\cdots$ &0\\
		\hline
		6 & 0 & 1 & 2 & $\cdots$ &$a-1$\\
		\hline
		7 & 1 & 1 & 1 & $\cdots$ &1\\
		\hline
		8 & 0 & 0 & 0 & $\cdots$ &0\\
		\hline
		9 & 0 & 0 & 0 & $\cdots$ &0\\
		\hline
		10 & 0 & 0 & 0 & $\cdots$ &0\\
		\hline
		11 & 0 & 0 & 0 & $\cdots$ &0\\
		\hline
		12 & 0 & 0 & 0 & $\cdots$ &0\\
		\hline
		13 & 0 & 0 & 0 & $\cdots$ &0\\
		\hline
		14 & 0 & 0 & 0 & $\cdots$ &0\\
		\hline
		15 & 0 & 0 & 0 & $\cdots$ &0\\
		\hline
		total $\beta^1$ & 3 & 5 & 7 & $\cdots$ & $2(a-1)+3=n+1$\\
		\hline
	\end{tabular}
	\caption{$\beta^1$ changes in  Lemma \ref{lemma: key}.}
	 \label{table:14}
\end{table}

By now we finally fulfill the proof of Lemma \ref{lemma: key}.
\end{proof}

\section{\textbf{Proof of Theorem \ref{theorem: bounding} for $n$ is even}}

In this section, we prove Theorem \ref{theorem: bounding} for an even $n \in \mathbb{Z}_{+}$. It is similar to  the proof of Lemma \ref{lemma: key}.
\begin{lemma}  \label{lemma: second}
	Let $n\in \mathbb{Z}_{+}$ be any  even number, there is a non-orientable $\mathbb{Z}_2^3$-coloring $\lambda$  over the polytope $nP$, such that for its natural associated  $\mathbb{Z}_2^4$-coloring $\delta$, we have   $\beta^1 (M(nP, \delta))= 5n-3$.
\end{lemma}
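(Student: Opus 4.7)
The plan is to follow the same blueprint as the proof of Lemma \ref{lemma: key}: construct an explicit non-orientable $\mathbb{Z}_2^3$-coloring $\lambda$ on $nP$ whose natural $\mathbb{Z}_2^4$-extension $\delta$ realizes $\beta^1(M(nP,\delta))=5n-3$, then confirm the arithmetic-progression hypothesis of Proposition \ref{prop:1} from the cases $n=2,4,6$. The entire architecture (pairwisely compatible bricks $S_1=(24247)$, $S_2=(65372)$, $S_3=(35716)$, affix letters $a=1,b=1,c=4$, Proposition \ref{remark:1} for the unique admissible extension, and the CT-tree counting of $\widetilde\beta^0(K_\omega)$ via Theorem \ref{theorem: ChoiP}) carries over verbatim; only the specific brick word changes.

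The first step is to exhibit, for $n=2$, a brick word built from the three bricks $S_1,S_2,S_3$ whose 15 subcomplex count sums to $7$ rather than $3$. In Lemma \ref{lemma: key} the word $[a\,S_1\,S_3\,S_1\,a]$ put almost all of the 15 rows of $row\,\Delta$ into \emph{connected} subcomplexes, so only two of them contributed. To raise the sum from $3$ to $7$ one must mix in the unused brick $S_2$ so that many more of the 15 rows of the extended row space have subcomplexes $K_\omega$ that split into two connected components under the CT-tree. I would try a word of the form $[a\,S_1\,S_2\,S_1\,a]$ (or $[a\,S_2\,S_3\,S_2\,a]$ with an appropriate rotation), write down the $4\times 17$ matrix $\Delta$, enumerate its 15 non-zero row vectors, and run the CT-tree procedure on each to tabulate $\widetilde\beta^0(K_{\omega_i})$ exactly as in Tables \ref{table:5}--\ref{table:7}. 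The candidate succeeds iff the total equals $7$.

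With the base word $[a\,B_1\,B_2\,\ldots\,B_k\,a]$ fixed (here $B_i\in\{S_1,S_2,S_3\}$, $k=2$), I would next form the inductive family $\delta^{(t)}$ by inserting $t$ extra copies of the central block $\{B_{k-1}B_k\}$, so that the coloring of $(n+2t)P$ ends in the same two bricks as the coloring of $nP$. This is precisely the setup of Proposition \ref{prop:1}, and by the ``regularity" of the adjacent matrices $X(mP)$ (each new pair of layers simply appends a translated copy of the previous two layers' bands) the CT-tree runs for every $\omega_i$ gain the same number of new cadence branches when passing from $(n+2t)P$ to $(n+2t+2)P$. Concretely, I would compute $\beta^1(M(4P,\delta^{(1)}))$ and $\beta^1(M(6P,\delta^{(2)}))$ directly, check that the two successive differences coincide and equal $10$, and then invoke Proposition \ref{prop:1} to conclude $\beta^1(M(nP,\delta^{((n-2)/2)}))=7+\tfrac{n-2}{2}\cdot 10=5n-3$.

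The genuine obstacle is the first step: locating a brick word whose 15 row-vectors split into components in such a uniform way that each of the first-layer \emph{and} last-layer pentagon indices is separated from the interior, producing as close to the maximum count of $15$ two-component subcomplexes as possible. Unlike Lemma \ref{lemma: key}, where the alternating $S_1S_3$ word was discovered by chasing a low-complexity configuration, here one needs a word tuned to the near-maximal end of the Betti spectrum. I expect this to require the computer-assisted search described in Section~2, followed by a careful per-row CT-tree verification. The remainder of the argument, namely confirming the arithmetic-progression step of difference $10$ and invoking Proposition \ref{prop:1}, is mechanical once the base word is in hand, because the CT-tree on each $\omega_i$ grows by the same constant number of branches for each of the finitely many row-vectors not killed by the insertion.
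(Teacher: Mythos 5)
Your overall strategy is exactly the paper's: keep the bricks $S_1=(24247)$, $S_2=(65372)$, $S_3=(35716)$ and the affixes $a=b=1$, $c=4$, find a base word on $2P$ whose natural $\mathbb{Z}_2^4$-extension has $\beta^1=7$, duplicate the last two bricks $t$ times, check the $n=2,4,6$ values form an arithmetic progression with common difference $10$, and invoke Proposition \ref{prop:1} to get $\beta^1=7+10t=5n-3$. That is precisely how Lemma \ref{lemma: second} is proved in the paper.

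The gap is that you never actually produce the base coloring, and your leading candidate is wrong: the word $[a\,S_1\,S_2\,S_1\,a]$ is exactly the coloring $\widetilde{\lambda}$ used later in Lemma \ref{lemma: fifth}, where its extension is computed to have $\beta^1(M(2P,\widetilde{\delta}))=5$, not $7$. The paper's proof instead takes the word $[b\,S_2\,(S_1S_2)\,b]$, i.e.\ the pattern $S_2,S_1,S_2$ with affix $1$ at both ends and characteristic vector $(1,3,5,7,6,2,4,2,2,4,7,3,5,7,6,2,1)$, and verifies (Table \ref{table:15}) that the per-row contributions give $7,17,27,\dots$, hence $10t+7$. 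Since the entire content of this lemma is the exhibition and verification of such a word (the repetition step being mechanical, as you say), deferring that to an unspecified computer search leaves the essential step unproved; your plan would become a proof only once a concrete word is named and its fifteen CT-tree computations for $n=2,4,6$ are carried out, as the paper does for its choice.
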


\begin{proof}
We still use the three coloring bricks $S_1=(24247)$, $S_2=(65372)$ and $S_3 =(35716)$ together with affix elements $a=1$, $b=1$ and $c=4$ as we adopted in Lemma \ref{lemma: key} and set up
$$(72424~~ 65372)=S_1 S_2\triangleq A_1.$$
 Following the same idea in Lemma \ref{lemma: key}, we firstly select out a suitable  non-orientable $\mathbb{Z}_2^3$-coloring $\lambda$ over the polytope  $2P$ as below $$(1,~3,~5,~7,~6,~2,~4,~2,~2,~4,~7,~3,~5,~7,~6,~2,~1),$$
such that $\beta^1(M(2P, \delta))=7$, where $\delta$ is the natural $\mathbb{Z}_2^4$-extension of $\lambda$.
Then we repeat the last two bricks for $t$ times in order to construct a coloring vector over the polytope $(2+2t)P$ and denote its characteristic function as $\lambda^t$. By Theorem   \ref{theorem: NakayamaN}  and Proposition \ref{remark:1},  we can get the unique admissible extension $\delta^t$ of $\lambda^t$. The sum of every column of the characteristic matrix of $\delta^t$ is 1 mod 2. That is,  $M((2+2t)P,\delta^t)$ is the orientable double cover of the non-orientable manifold $M((2+2t)P,\lambda^t)$. Finally we calculate out the increasing law of $\beta^1$ as shown in Table \ref{table:15}.

\begin{table}[H]
		\begin{tabular}{|c|c|c|c|c|c|}
			\hline
			& $n=2$ & $n=4$ & $n=6$ & $\cdots$ & $n=2+2t$,  $t\in\mathbb{Z}_{>0}$\\
			\hline
			1 & 0 & 0 & 0 & $\cdots$ &0\\
			\hline
			2 & 0 & 0 & 0 & $\cdots$ &0\\
			\hline
			3 & 2 & 4 & 6 & $\cdots$ &$2t+2$\\
			\hline
			4 & 1 & 2 & 3 & $\cdots$ &$t+1$\\
			\hline
			5 & 0 & 0 & 0 & $\cdots$ &0\\
			\hline
			6 & 1 & 3 & 5 & $\cdots$ &$2t+1$\\
			\hline
			7 & 0 & 0 & 0 & $\cdots$ &0\\
			\hline
			8 & 1& 2 & 3 & $\cdots$ &$t+1$\\
			\hline
			9 & 0 & 0 & 0 & $\cdots$ &0\\
			\hline
			10 & 0 & 1 & 2 & $\cdots$ &$t$\\
			\hline
			11 & 0 & 0 & 0 & $\cdots$ &0\\
			\hline
			12 & 1 & 3 & 5 & $\cdots$ &$2t+1$\\
			\hline
			13 & 1 & 2 & 3 & $\cdots$ &$t+1$\\
			\hline
			14 & 0 & 0 & 0 & $\cdots$ &0\\
			\hline
			15 & 0 & 0 & 0 & $\cdots$ &0\\
			\hline
			total $\beta^1$ & 7 & 17 & 27 & $\cdots$ & $10t+7=5n-3$\\
			\hline
		\end{tabular}
		\caption{$\beta^1$ changes for Lemma \ref{lemma: second}.}
		\label{table:15}
\end{table}

Thus we can always find out a non-orientable $\mathbb{Z}_2^3$-colorings $\lambda$ such that its natural $\mathbb{Z}_2^4$-extension $\delta$ satisfying $\beta^1(M(nP,\delta))=5n-3$.
\end{proof}
\begin{lemma}\label{lemma: third}
For any even integer $n\in \mathbb{Z}_{+}$ and any odd integer $k\in [5n-1,5n+3]$, there is a non-orientable $\mathbb{Z}_2^3$-coloring $\lambda$  over the polytope $nP$, such that for its natural associated  $\mathbb{Z}_2^4$-coloring $\delta$, we have $\beta^1 (M(nP, \delta))=k$.
\end{lemma}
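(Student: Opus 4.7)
The plan is to follow the template established by Lemmas \ref{lemma: key} and \ref{lemma: second}. The three target values $k = 5n-1,\ 5n+1,\ 5n+3$ differ by $2$, so we will construct three separate parametric families of non-orientable $\mathbb{Z}_2^3$-colorings on $nP$, one for each value, by specifying an appropriate base coloring on $2P$ and then repeating its last two coloring bricks $t$ times to obtain a coloring on $(2+2t)P$.

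First, I would retain the three pairwise-compatible bricks $S_1 = (24247)$, $S_2 = (65372)$, $S_3 = (35716)$ and the affix elements $a=1$, $b=1$, $c=4$ introduced in Lemma \ref{lemma: key}. As in that lemma, any coloring vector of the form $[\ast \, S_{i_1} S_{i_2} \cdots S_{i_s} \, \ast]$ built out of these bricks (with suitable rotations) automatically satisfies the non-singularity condition at every vertex of $nP$, yields a non-orientable $M(nP,\lambda)$, and by Proposition \ref{remark:1} has a unique natural $\mathbb{Z}_2^4$-extension $\delta$ giving an orientable double cover.

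Next, for each base value $k_0 \in \{9, 11, 13\}$ corresponding to $n=2$, I would search among ordered compatible brick pairs $A = S_i S_j$ and leading affix letters for a concrete coloring vector of the form $[\alpha \, S_i \, A \, \beta]$ on $2P$ whose natural $\mathbb{Z}_2^4$-extension $\delta$ has $\beta^1(M(2P,\delta)) = k_0$. For each candidate I would compute the $15$ row-space vectors of the characteristic matrix, run the CT-tree procedure from the proof of Lemma \ref{lemma: key} on each full-subcomplex $K_\omega$, and sum the $\widetilde{\beta}^0$ contributions via Theorem \ref{theorem: ChoiP}. Once three such base colorings are in hand, extend each by repeating the final pair of bricks $t$ times to obtain $\lambda^t$ on $(2+2t)P$, with natural $\mathbb{Z}_2^4$-extension $\delta^t$.

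Finally, for each family I would verify the hypothesis of Proposition \ref{prop:1} by computing the first three values $\beta^1(M((2+2t)P, \delta^t))$ for $t = 0, 1, 2$ and checking that they form an arithmetic progression with common difference $10$. Because adding two bricks $S_iS_j$ shifts the ordered facet indexing by exactly $10$ positions and affects only the middle rows of the row space in a uniform way (as exhibited in Tables \ref{table:9}–\ref{table:13} of Lemma \ref{lemma: key}), each CT-tree grows by two additional branches per row, producing a constant increment to each $\widetilde{\beta}^0(K_\omega)$. Then Proposition \ref{prop:1} gives $\beta^1(M((2+2t)P, \delta^t)) = k_0 + 10t$, which evaluated at $n = 2+2t$ yields $k_0 + 5(n-2) = 5n + (k_0 - 10) \in \{5n-1,\ 5n+1,\ 5n+3\}$ as $k_0$ ranges over $\{9,11,13\}$.

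The main obstacle is the combinatorial search in the second step: the Betti number of $M(2P,\delta)$ depends sensitively on the precise choice of brick order, rotations, and affix elements, and there is no closed-form shortcut. Once suitable base colorings are exhibited (and this is where explicit tables analogous to Table \ref{table:15} must be produced), the arithmetic-progression argument is routine since the increment mechanism is identical to that already verified for the families in Lemmas \ref{lemma: key} and \ref{lemma: second}.
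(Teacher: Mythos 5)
Your overall strategy coincides with the paper's: for each of the three target values fix a base non-orientable coloring on $2P$ built from compatible bricks, repeat its last compatible pair $t$ times to color $(2+2t)P$, check the first three terms and invoke Proposition \ref{prop:1} to get an arithmetic progression with common difference $10$, so that $k_0+10t=5n+(k_0-10)$. The genuine gap is that the entire mathematical content of this lemma is the \emph{existence} of explicit base colorings whose natural $\mathbb{Z}_2^4$-extensions realize the three required values and whose repeat-families really do have increment $10$, and your proposal only promises to search for them. Nothing guarantees the search succeeds inside the brick set you fix: you restrict to $S_1=(24247)$, $S_2=(65372)$, $S_3=(35716)$ from Lemma \ref{lemma: key}, but every value those bricks are shown to produce on $2P$ elsewhere in the paper is small ($3$, $5$, $7$ in Lemmas \ref{lemma: key}, \ref{lemma: fifth} and \ref{lemma: second}), and for precisely this lemma the paper abandons them and introduces new bricks, $(34246)$, $(26513)$ and $(24246)$, $(73153)$, $(14245)$ with new affixes (Tables \ref{table:16} and \ref{table:17}), in order to push $\beta^1$ to the top of the range. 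So the existence step, which is the heart of the proof, is not established by your argument.

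A second, related problem is your justification that repeating a pair of bricks ``produces a constant increment of $10$ because each CT-tree grows by two additional branches per row.'' That is not a valid general principle: the increment per repeated pair equals the total number of new connected components gained over the fifteen subcomplexes $K_\omega$, and in the paper's own families it is sometimes $2$ (Table \ref{table:14}), sometimes $0$ or $2$ (Table \ref{table:19}), and sometimes $10$ (Table \ref{table:15}), depending on which pair is repeated. So the common difference must be computed for each candidate family (as you do plan, via the first three terms), not inferred from the indexing shift. Finally, note that you could not close the gap simply by citing the paper's data: the printed computations for this lemma give base values $13,15,17$ on $2P$, i.e.\ $13+10t$, $15+10t$, $17+10t$ on $(2+2t)P$, which evaluate to $5n+3$, $5n+5$, $5n+7$ rather than the stated range $[5n-1,5n+3]$; your intended base values $9,11,13$ are the ones the statement actually needs, so explicit colorings realizing them, together with the verified difference-$10$ behaviour, still have to be produced.
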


\begin{proof}
We still start at $n=2$ and select out suitable characteristic functions to construct manifolds, whose first Betti numbers would add up by $10t$ when the last pair of their coloring bricks are repeated for $t$ times. Firstly we prepare some bricks and affixes for constructing the wanted coloring vectors as shown in Table \ref{table:16}.

\begin{table}[H]
	\begin{tabular}{|c|c|c|c|}
		\hline
		$a\in\mathcal{A}(S_1)$ & brick $S_1$& brick $S_2$& compatible pair that being repeated  \\
		\hline
		1 &34246 &26513&(26513 34246)=$S_2S_1\triangleq A_1$ \\	
		\hline
	\end{tabular}
		\caption{Bricks and affixes for $\lambda_1^t$ in Lemma \ref{lemma: third}.}
		\label{table:16}
\end{table}

Let $\lambda_1^0$, $\lambda_1^1$ and $\lambda_1^2$ be the three non-orientable $\mathbb{Z}_2^3$-coloring characteristic functions of coloring vectors 
\begin{gather}
[aS_1A_1a], \notag \\
[aS_1A_1A_1a]\triangleq[aS_1A_1a]+A_1, \notag \\
[aS_1A_1A_1A_1a]\triangleq[aS_1A_1a]+2\cdot A_1, \notag 
\end{gather}
which are over the polytopes  $2P$, $4P$ and $6P$ respectively. Their characteristic vectors are
\begin{gather}
(1,~2,~4,~4,~3,~6,~5,~1,~6,~3,~2,~2,~4,~4,~3,~6,~1), \notag \\
(1,~2,~4,~4,~3,~6,~5,~1,~6,~3,~2,~2,~4,~4,~3,~6,~5,~1,~6,~3,~2,~2,~4,~4,~3,~6,~1), \notag \\
(1,~2,~4,~4,~3,~6,~5,~1,~6,~3,~2,~2,~4,~4,~3,~6,~5,~1,~6,~3,~2,~2,~4,~4,~3,~6,~5,~1,~6,~3,~2,~2,~4,~4,~3,~6,~1). \notag 
\end{gather}
The natural associated $\mathbb{Z}_2^4$-extensions are denoted as $\delta_1^0$, $\delta_1^1$ and $\delta_1^2$. By calculation we have the first Betti numbers  of those recovered manifolds, namely    $\beta^1(M(2P,\delta_1^0))$,  $\beta^1(M(4P,\delta_1^1))$ and  $\beta^1(M(6P,\delta_1^2))$,  are 13, 23 and 33 respectively. Thus according to Proposition \ref{prop:1},
\begin{equation}\label{equation:01}
 \beta^1(M((2+2t)P,\delta_1^t))=13+10t,~~ t\in \mathbb{Z}_{+}.
\end{equation}

Similarly, we claim some bricks and affixes in Table \ref{table:17}.

\begin{table}[H]
	\begin{tabular}{|c|c|c|c|c|c|}
		\hline
		$a\in\mathcal{A}(S_1)$ &brick $S_1$& brick $S_2$& brick $S_3$ &	$b~or~c\in\mathcal{A}(S_3)$&compatible pair that being repeated\\
		\hline
		1 &24246 & 73153& 14245& 3~ or ~ 7&(73153 14245)=$S_2S_3\triangleq A_2$\\	
		\hline
	\end{tabular}
	\caption{Bricks and affixes for $\lambda_2^t$ and $\lambda_3^t$ of Lemma \ref{lemma: third}.}
	\label{table:17}
\end{table}

Here we take $\lambda_2^0$, $\lambda_2^1$ and $\lambda_2^2$ to be the three non-orientable $\mathbb{Z}_2^3$-coloring characteristic functions of coloring vectors
\begin{gather}
[aS_1A_2b], \notag \\
[aS_1A_2A_2b]\triangleq[aS_1A_2b]+A_2, \notag \\
[aS_1A_2b]\triangleq[aS_1A_2b]+2\cdot A_2, \notag 
\end{gather}
which are over the polytopes $2P$, $4P$ and $6P$ respectively. And denoting $\lambda_3^0$, $\lambda_3^1$ and $\lambda_3^2$ to be the $\mathbb{Z}_2^3$-coloring characteristic functions of coloring vectors
\begin{gather}
[aS_1A_2c], \notag \\
[aS_1A_2A_2c]\triangleq[aS_1A_2c]+A_2, \notag \\
[aS_1A_2A_2A_2c]\triangleq[aS_1A_2c]+2\cdot A_2, \notag 
\end{gather}
which are over the polytopes $2P$, $4P$ and $6P$ respectively. Moreover, their natural associated $\mathbb{Z}_2^4$-extensions are $\delta_2^0$, $\delta_2^1$, $\delta_2^2$ and $\delta_3^0$, $\delta_3^1$, $\delta_3^2$. By calculation we have the first Betti numbers of these recovered manifolds, namely  $\beta^1(M(2P, \delta_2^0))$,  $\beta^1(M(4P, \delta_2^1))$, $\beta^1(M(6P, \delta_2^2))$ and $\beta^1(M(2P, \delta_3^0))$,  $\beta^1(M(4P, \delta_3^1))$, $\beta^1(M(6P, \delta_3^2))$, are 15, 25, 35 and 17, 27, 37 respectively. Thus we have, for each $t\in \mathbb{Z}_{+}$,
\begin{equation}\label{equation:02}
\beta^1(M((2+2t)P, \delta_2^t))=15+10t,
\end{equation}
\begin{equation}\label{equation:03}
\beta^1(M((2+2t)P,\delta_3^t))=17+10t.
\end{equation}
By (\ref{equation:01}), (\ref{equation:02}) and (\ref{equation:03}), we fulfill the proof of Lemma \ref{lemma: third}.\end{proof}
\begin{lemma} \label{lemma: fouth}
	For any even integer $n\in\mathbb{Z}_{+}$ and any odd integer $k \in[1, n-1]$, there is a non-orientable $\mathbb{Z}_2^3$-coloring $\lambda$  over the polytope $nP$, such that for its natural associated  $\mathbb{Z}_2^4$-coloring $\delta$, we have  $\beta^1 (M(nP,\delta)) = k$.
\end{lemma}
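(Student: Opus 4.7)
The plan is to follow the framework set up in Lemmas \ref{lemma: key}, \ref{lemma: second}, and \ref{lemma: third}: for each odd target $k \in [1, n-1]$ with $n$ even, I would build a non-orientable $\mathbb{Z}_2^3$-coloring on $nP$ from a ``short'' base coloring realising $\beta^1=k$ on a small polytope $n_0 P$, then pad it out to $nP$ by inserting repeated compatible brick pairs, invoking Proposition \ref{prop:1} to control the resulting first Betti number.

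First, I would write $k = 2j+1$ with $0 \le j \le (n-2)/2$ and choose a small even $n_0$ (depending on $k$, for instance $n_0 = k-1$ or a fixed small value like $2$ or $4$) together with a base coloring vector of the shape $[aS_{i_1}S_{i_2}\cdots S_{i_r}b]$, built from the compatible bricks $S_1, S_2, S_3$ and their affixes introduced in Lemma \ref{lemma: key}, so that the natural $\mathbb{Z}_2^4$-extension $\delta_0$ satisfies $\beta^1(M(n_0P,\delta_0)) = k$. The existence of such base colorings in the low range should be established by direct CT-tree computation exactly as in Example \ref{example:6} and in the tables of Lemma \ref{lemma: second}; the idea is that judicious choice of the two pentagon bricks flanking the hexagonal core controls the number of connected components of each full-subcomplex $K_\omega$ and can be tuned to produce any prescribed small odd value of $\beta^1$.

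The key new ingredient is a \emph{neutral} compatible pair $A = S_p S_q$ (possibly after rotation) which is compatible with the chosen base coloring and which, when inserted between the last two bricks of the base, does not change $\beta^1$. To verify neutrality it suffices, by Proposition \ref{prop:1}, to compare $\beta^1$ on the three polytopes $n_0P$, $(n_0+2)P$, $(n_0+4)P$ obtained by inserting $0$, $1$, and $2$ copies of $A$ and check by CT-tree bookkeeping that the three values coincide. Once neutrality is verified, Proposition \ref{prop:1} immediately propagates $\beta^1 = k$ to all polytopes $(n_0 + 2t)P$ with $t \ge 0$, so choosing $t = (n-n_0)/2$ delivers the required coloring on $nP$ with $\beta^1 = k$. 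Non-orientability of the underlying $\mathbb{Z}_2^3$-coloring is preserved because the column-sum condition characterising orientability (Remark \ref{remark:2}) is a local condition on the brick pattern, automatically inherited from the base by repetition of $A$.

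The main obstacle I expect is locating a suitable neutral pair for each base coloring. Compatibility at the ten vertices shared by two adjacent bricks is very restrictive, and the neutral pair must simultaneously be compatible with the base bricks, realise $\beta^1$-invariance upon repetition, and preserve non-orientability, while also allowing the admissible $\mathbb{Z}_2^4$-extension to exist. Since the number of $GL_3(\mathbb{Z}_2)$-equivalence classes of admissible colorings over $2P$ and $3P$ is already in the tens of thousands, a computer enumeration in the spirit of Section 2 will almost certainly be needed to find the required pair; the arithmetic-progression reduction afforded by Proposition \ref{prop:1} means, however, that only finitely many Betti-number checks are required once candidate bricks and affixes are fixed.
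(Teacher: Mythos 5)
Your padding step (verify three consecutive values, then invoke Proposition \ref{prop:1} to propagate a $\beta^1$-preserving insertion) matches the paper's mechanism, but the proposal has a genuine gap at the base case, and it is exactly the point where the paper's proof does something you did not anticipate. You need, for \emph{every} odd $k\in[1,n-1]$ with $n$ arbitrary, a base coloring on some even $n_0P$ with $\beta^1(M(n_0P,\delta_0))=k$. If $n_0$ is a fixed small value such as $2$ or $4$, this is impossible for large $k$, since $\beta^1(M(n_0P,\delta_0))\le 5n_0+3$; so $n_0$ must grow with $k$ (roughly $n_0\ge (k-3)/5$), and then "the existence of such base colorings \ldots established by direct CT-tree computation" is an infinite family of claims, one for each $k$, which no finite computation can settle. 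At that point the base-case existence is essentially the statement of Lemma \ref{lemma: fouth} itself (for $n_0$ in place of $n$), so the argument as written is circular: a neutral pair alone can only transport a Betti number you have already realized, it cannot produce the unbounded range of values $k$.

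The idea you are missing is the second, $\beta^1$-\emph{incrementing} compatible pair. The paper takes a single base coloring $[aS_1S_3S_2b]$ on $2P$ (with new bricks $S_2=(54241)$, $S_3=(67172)$, $S_4=(73172)$, not the bricks of Lemma \ref{lemma: key}) having $\beta^1=1$, and two insertable pairs $A_1=S_3S_2$ and $A_2=S_4S_2$: CT-tree checks on the first three terms show that inserting $A_1$ leaves $\beta^1$ unchanged while inserting $A_2$ raises it by exactly $2$, and Proposition \ref{prop:1} propagates both progressions. The two-parameter family $[aS_1S_3S_2A_1^{t_1}A_2^{t_2}b]$ on $2(t_1+t_2+1)P$ then satisfies $\beta^1=1+2t_2$, so choosing $t_1+t_2=\frac{n}{2}-1$ and $t_2=\frac{k-1}{2}$ lands on $nP$ with $\beta^1=k$ for every odd $k\in[1,n-1]$, using only finitely many explicit computations in total. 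Your plan becomes correct once you replace "find a base with $\beta^1=k$ by direct computation" with "generate $\beta^1=k$ by repeated insertion of an increment-by-$2$ pair, then pad to length $n$ with the neutral pair"; as stated, that generating mechanism is absent. (Your remark that non-orientability is inherited is essentially fine, since the set of colors used does not change under repetition, though the correct citation is Theorem \ref{theorem: NakayamaN} rather than the column-sum condition, which concerns orientability of the $\mathbb{Z}_2^4$-extension.)
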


\begin{proof}
	Using the notations of $\{S_i\}_{i=1}^4$,  $\{A_i\}_{i=1}^2$ and the affix elements $a$, $b$ to represent elements in Table \ref{table:18}.
	
	\begin{table}[H]
		\begin{tabular}{|c|c|c|c|c|c|c|}
			\hline
			$a\in\mathcal{A}(S_1)$ &brick $S_1$& brick $S_2$& $b\in\mathcal{A}(S_2)$&brick $S_3$ &brick $S_4$&compatible pair that being repeated\\
			\hline
			1 &24247 &54241&3&67172&73172&(67172 54241)$\triangleq A_1$ \\	
			 & &&&&&(73172 54241)$\triangleq A_2$ \\	
			\hline
		\end{tabular}
		\caption{Bricks and affixes for $\lambda^{(t_1, t_2)}$ of Lemma \ref{lemma: fouth}.}
		\label{table:18}
	\end{table}
	
Firstly we find out a non-orientable $\mathbb{Z}_2^3$-coloring $\lambda$ over the polytope $2P$, whose coloring vector is $[aS_1 S_3 S_2 b]\triangleq A$. Its natural $\mathbb{Z}_2^4$-extension is denoted by $\delta$. Using $\lambda^{(t_1,t_2)}$ to represent the $\mathbb{Z}_2^3$-coloring characteristic function of coloring vector
$$[aS_1S_3S_2\underbrace{A_1, ..., A_1}_{t_1}\underbrace{A_2, ..., A_2}_{t_2}b] \triangleq A+t_1A_1+t_2A_2$$
over the polytope $2(t_1+t_2+1)P$. Moreover $\delta^{(t_1, t_2)}$ is the natural $\mathbb{Z}_2^4$-extension of $\lambda^{(t_1, t_2)}$, which is also defined on the polytope $2(t_1+t_2+1)P$. Specially  we have $\lambda^{(0, 0)}=\lambda$. The coloring vectors of  $\lambda^{(1, 0)}$ and $\lambda^{(0,1)}$ are $[aS_1S_3S_2A_1b]$ and $[aS_1S_3S_2A_2b]$ respectively. It is necessary to ensure the compatibility of $S_2$ and $S_4$ if we want to glue $A_2$ after $A_1$. And here everything has checked to be fine. Then we calculate out the Betti numbers and arrange them in Table \ref{table:19}.

	\begin{table}[H]
		\begin{tabular}{|c|c|c|c|}
			\hline
			$\beta^1(M(2P,\delta^{(0,0)}))=1$ & $\beta^1(M(4P,\delta^{(1,0)}))=1$&$\beta^1(M(6P, \delta^{(2,0)}))=1$&...\\
			    &$\beta^1(M(4P, \delta^{(0, 1)}))=3$ &$\beta^1(M(6P,\delta^{(1, 1)}))=3$&... \\	
			& &$\beta^1(M(6P, \delta^{(0, 2)}))=5$&...\\	
			\hline
		\end{tabular}
		\caption{ $\beta^1$ change of $\delta^{(t_1, t_2)}$ in Lemma \ref{lemma: fouth}.}
		\label{table:19}
	\end{table}

By former analysis in Lemma \ref{lemma: key}, the Betti number sequence follows certain pattern if the characteristic vectors and the adjacent matrices change regularly. So if we have the first three numbers lying in an arithmetic progression, then the Betti number sequence follows an arithmetic progression. From 
\begin{gather}
\beta^1(M(2P,\delta^{(0,0)}))=1, \notag \\
\beta^1(M(4P,\delta^{(1,0)}))=1, \notag \\
\beta^1(M(6P,\delta^{(2,0)}))=1, \notag 
\end{gather}
we have
\begin{equation}\label{equation:1}
\beta^1(M(2(t_1+t_2+1)P, \delta^{(t_1,t_2)})) = \beta^1(M(2(t_1 + t_2+2)P,\delta^{(t_1+1,t_2)})).
 \end{equation}
From
	\begin{gather}
	\beta^1(M(2P,\delta^{(0,0)}))=1,\notag\\
	\beta^1(M(4P,\delta^{(0,1)}))=3,\notag\\
	\beta^1(M(6P,\delta^{(0,2)}))=5,\notag
	\end{gather}
we have
\begin{equation}\label{equation:2}
 \beta^1(M(2(t_1 + t_2+1)P, \delta^{(t_1,t_2)}))+2 =\beta^1(M(2(t_1 + t_2+2)P, \delta^{(t_1,t_2+1)})).
\end{equation}
By (\ref{equation:1}) and (\ref{equation:2}) we obtain
\begin{equation}\label{equation:3}
 \beta^1(M(nP,\delta^{(t,\frac{n}{2}-1-t)}))= n-2t-1,
\end{equation}
where $n$ is even,  and $0\leqslant t\leqslant \frac{n}{2}-1$.

Thus we finish the proof of Lemma  \ref{lemma: fouth}.
\end{proof}

\begin{lemma} \label{lemma: fifth}
	For any even integer $n\in\mathbb{Z}_{+}$ and any odd integer $k \in[n+3,5n-5]$, there is a non-orientable $\mathbb{Z}_2^3$-coloring $\lambda$  over the polytope $nP$, such that for its natural associated  $\mathbb{Z}_2^4$-coloring $\delta$, we have  $\beta^1 (M(nP,\delta)) = k$.
\end{lemma}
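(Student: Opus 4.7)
The plan is to mimic the constructive strategy of the preceding lemmas: exhibit a family of non-orientable $\mathbb{Z}_2^3$-colorings of $nP$, built by inserting repeated compatible brick pairs into a short base coloring of $2P$, and then apply Proposition \ref{prop:1} together with the CT-tree procedure from the proof of Lemma \ref{lemma: key} to verify that every odd $k \in [n+3,\,5n-5]$ is attained.

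First I would select a small family of base non-orientable $\mathbb{Z}_2^3$-colorings on $2P$, say $\lambda_1,\ldots,\lambda_s$, whose natural $\mathbb{Z}_2^4$-extensions have distinct odd first Betti numbers $b_1,\ldots,b_s$ located at the low end of the target range. Simultaneously I would pick a collection of pairwisely compatible brick pairs $A_1,\ldots,A_r$ that can be inserted in any order into the interior of the coloring vector without violating the non-singularity condition, each insertion of $A_j$ increasing the length parameter by $2$ and increasing $\beta^1$ by a fixed even constant $c_j$ readable from the CT-tree. For a target odd $k \in [n+3,5n-5]$ one would then write down an insertion pattern, i.e.\ an index $i$ and nonnegative integers $t_1,\ldots,t_r$ with $\sum_j t_j = (n-2)/2$, such that
\[
k \;=\; b_i \;+\; \sum_{j=1}^{r} c_j t_j,
\]
and take $\lambda$ to be the corresponding coloring on $nP$ obtained by interleaving the base $\lambda_i$ with the $A_j$'s. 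Once the first three cases $n=2,4,6$ of each fixed recipe are computed via the CT-tree, Proposition \ref{prop:1} promotes the resulting arithmetic progression of Betti numbers to an identity valid for every even $n$, making the above representation legitimate in general.

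To cover the $2n-3$ odd values in $[n+3,5n-5]$, I expect to need pair-contributions of the form $c_1 = 2$ together with some $c_2 \in \{4,10\}$ (the values $2$ and $10$ were already realized in Lemma \ref{lemma: key} and Lemma \ref{lemma: second}, respectively), combined with a handful of base colorings on $2P$ whose bases $b_i$ are spaced by $2$, so that the union of the resulting arithmetic progressions exhausts the full set of targets. The main obstacle is combinatorial rather than conceptual: one must produce explicit bricks $S_i$ and pairs $A_j$ satisfying (i) the non-singularity condition at every shared vertex in every admissible gluing order, (ii) non-orientability of $M(nP,\lambda)$ so that Remark \ref{remark:2} supplies the admissible $\mathbb{Z}_2^4$-extension $\delta$, and (iii) the prescribed Betti-number contributions $c_j$. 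This is a finite search, guided by the enumeration of $\mathbb{Z}_2^3$-colorings on $nP$ for small $n$ discussed in Section 2.5 and carried out by the same programming techniques used earlier in the paper. Once such bricks are fixed, the whole argument reduces to assembling a table analogous to Tables \ref{table:14} and \ref{table:15} that records the per-recipe $\widetilde{\beta}^0$-contributions, after which the claim follows.
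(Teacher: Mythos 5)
Your blueprint is the one the paper itself uses for Lemma \ref{lemma: fifth}: seed non-orientable colorings on $2P$, repeated compatible brick pairs whose insertions raise $\beta^1$ by the fixed even amounts already seen in Lemma \ref{lemma: key} (increment $2$) and Lemma \ref{lemma: second} (increment $10$), CT-tree computation of the first three terms of each family, and Proposition \ref{prop:1} to promote the resulting arithmetic progressions, with non-orientability and the admissible extension supplied by Theorem \ref{theorem: NakayamaN} and Proposition \ref{remark:1}. The genuine gap is that everything carrying the existence claim is deferred to an unexecuted ``finite search'': you never exhibit bricks, affixes, or seed colorings, never verify compatibility, non-orientability, or the actual increments, and never check that the progressions you would obtain cover the target set. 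The paper's proof of this lemma consists precisely of those data and computations --- the bricks and four pairs $A_1,\dots,A_4$ of Table \ref{table:20}, the seeds with $\beta^1(M(2P,\delta))=\beta^1(M(2P,\widetilde{\delta}))=5$, the families $\delta_i^{(t_1,t_2)}$ and $\widetilde{\delta^t}$ with the values recorded in \eqref{equation:6}, \eqref{equation:7} and Table \ref{table:21} --- so a proposal that postpones all of this has not yet proved anything.

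Moreover, the quantitative part of your plan is left unverified and one of your suggested choices would fail. With $\sum_j t_j=(n-2)/2$ insertions on a base $b$ over $2P$ and increments $c_1=2$, $c_2$, the reachable values are $b+(n-2)+(c_2-2)t$, $0\le t\le (n-2)/2$; for $c_2=4$ this tops out at $b+2(n-2)$, far short of $5n-5$, so the option $c_2\in\{4,10\}$ must in fact be $c_2=10$. Even then one gets difference-$8$ progressions, so hitting every odd residue and both ends of $[n+3,5n-5]$ requires four bases spaced by $2$ anchored at $5$; the paper instead realizes effective bases $5+2i$ on $4P$ via the four terminal pairs $A_i$, which reach only $5n-7$, and therefore needs the separate pure-increment-$10$ family $\widetilde{\delta^t}$ to obtain $k=5n-5$, as well as the $2P$ seeds for the case $n=2$. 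Finally, ``insertable in any order'' is stronger than what holds: compatibility must be checked at each junction for the specific concatenation used, which is why the paper fixes the order $[aS_1A_3\cdots A_3A_4A_1\cdots A_1A_i*]$ and verifies it. These endpoint and compatibility issues are exactly where such an argument can break, so they need to be settled explicitly rather than assumed.
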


\begin{proof}
Keeping the notations of $S_1=(24247)$, $S_2=(65372)$, $S_3 =(35716)$ and their affix colorings $a=1$, $b=1$, $c=4$ that claimed in Lemma \ref{lemma: key}. Four compatible pairs useful to this proof are arranged in Table \ref{table:20}.

	\begin{table}[H]
		\begin{tabular}{|c|c|c|c|c|c|}
			\hline
			$a\in\mathcal{A}(S_1)$ &brick $S_1$& $b\in\mathcal{A}(S_2)$ &brick $S_2$& $c\in\mathcal{A}(S_3)$&brick $S_3$ \\
			\hline
			1 &24247 &1&65372&4&35716\\
			\hline
			\multicolumn{6}{|c|}{compatible pair that being repeated}\\
			\hline
			\multicolumn{6}{|c|}{(42472 71635)=$S_1S_3\triangleq A_1$}\\
			\multicolumn{6}{|c|}{(42472 37265)=$S_1S_2\triangleq A_2$}\\
			\multicolumn{6}{|c|}{(65372 24247)=$S_2S_1\triangleq A_3$}\\
			\multicolumn{6}{|c|}{(65372 71635)=$S_2S_3\triangleq A_4$}\\
	     	\hline
		\end{tabular}
		\caption{Bricks and affixes for $\lambda_i^{(t_1,t_2)}$ in Lemma \ref{lemma: fifth}.}
		\label{table:20}
	\end{table}
Firstly we seek out two non-orientable $\mathbb{Z}_2^3$-colorings $\lambda$ and $\widetilde{\lambda}$ on the polytope $2P$, whose colorings are $[aS_1 S_2 S_3c]$ and $[aS_1 S_2 S_1a]$. Their natural $\mathbb{Z}_2^4$-extensions are denoted as $\delta$ and $\widetilde{\delta}$. By calculation we have
\begin{equation}\label{equation:0}
\beta^1(M(2P,\delta))=\beta^1(M(2P,\widetilde{\delta}))=5.
\end{equation}

Now we use $\lambda_i^{(t_1,t_2)}$ to represent the $\mathbb{Z}_2^3$-coloring  characteristic function of coloring vector
$$[aS_1\underbrace{A_3,...,A_3}_{t_1}A_4 \underbrace{A_1,...,A_1}_{t_2}A_i*]\triangleq [aS_1]+t_1A_3+A_4+t_2A_1+A_i,$$
over the polytope $2(t_1+t_2+2)P$, here $*$ is the corresponding affix element, which means $*=c,b,a,c$ for $i=1,2,3,4$ respectively. Specially the coloring vector of $\lambda_i^{(0,0)}$ is $[aS_1A_4A_i*]$. Moreover $\delta_i^{(t_1,t_2)}$ is the natural associated $\mathbb{Z}_2^4$-extension of $\lambda_i^{(t_1,t_2)}$.

From 
	\begin{gather}
	\beta^1(M(4P,\delta_i^{(0,0)}))=5+2i,\notag\\
	\beta^1(M(6P,\delta_i^{(0,1)}))=7+2i,\notag\\
	\beta^1(M(8P,\delta_i^{(0,2)}))=9+2i,\notag
	\end{gather}

we have \begin{equation}\label{equation:4}
\beta^1(M(2(t_1+t_2+2)P,\delta_i^{(t_1,t_2)}))+2 = \beta^1(M(2(t_1 + t_2+3)P,\delta_i^{(t_1,t_2+1)})),~~ for~~ i=1,2,3,4.
\end{equation}

From 
\begin{gather}
\beta^1(M(4P,\delta_i^{(0,0)}))=5+2i,\notag\\
\beta^1(M(6P,\delta_i^{(1,0)}))=15+2i,\notag\\
\beta^1(M(8P,\delta_i^{(2,0)}))=25+2i.\notag
\end{gather}

Then for $i=1,2,3,4$,
 we have \begin{equation}\label{equation:5}
\beta^1(M(2(t_1 + t_2+2)P,\delta^{(t_1,t_2)}_i))+10 =\beta^1(M(2(t_1 + t_2+3)P,\delta^{(t_1+1,t_2)}_i)).\end{equation}

By (\ref{equation:4}) and (\ref{equation:5}), we have
\begin{equation}\label{equation:6}
\beta^1(M(nP,\delta^{(t,\frac{n}{2}-2-t)}_i))= n+8t+2i+3,
\end{equation}
for each $n$ is even,  and $0\leqslant t\leqslant \frac{n}{2}-2$.

Moreover, denoted by $\widetilde{\lambda^0}$, $\widetilde{\lambda^1}$ and $\widetilde{\lambda^2}$ the three $\mathbb{Z}_2^3$-coloring characteristic functions of
\begin{gather}
[aS_1A_3a],\notag\\
[aS_1A_3A_3a]\triangleq[aS_1A_3a]+A_3,\notag\\
[aS_1A_3A_3A_3a]\triangleq[aS_1A_3a]+2\cdot A_3,\notag
\end{gather}
\noindent which are the colorings of the polytopes $2P$, $4P$ and $6P$ respectively. Their characteristic vectors are
\begin{gather}
(1,~ 2,~ 4,~ 4,~ 2,~ 7,~ 3,~ 7,~ 5,~ 2,~ 6,~ 2,~ 4,~ 4,~ 2,~ 7,~ 1),\notag\\
(1,~ 2,~ 4,~ 4,~ 2,~ 7,~ 3,~ 7,~ 5,~ 2,~ 6,~ 2,~ 4,~ 4,~ 2,~ 7,~ 3,~ 7,~ 5,~ 2,~ 6,~ 2,~ 4,~ 4,~ 2,~ 7,~ 1),\notag\\
(1,~ 2,~ 4,~ 4,~ 2,~ 7,~ 3,~ 7,~ 5,~ 2,~ 6,~ 2,~ 4,~ 4,~ 2,~ 7,~ 3,~ 7,~ 5,~ 2,~ 6,~ 2,~ 4,~ 4,~ 2,~ 7,~ 3,~ 7,~ 5,~ 2,~ 6,~ 2,~ 4,~ 4,~ 2,~ 7,~ 1).\notag
\end{gather}
\noindent And the natural associated $\mathbb{Z}_2^4$-colorings are $\widetilde{\delta^0}$, $\widetilde{\delta^1}$ and $\widetilde{\delta^2}$. Specially $\widetilde{\lambda^0}=\widetilde{\lambda}$, which also means $\widetilde{\delta^0}=\widetilde{\delta}$. By calculation we obtain that the first Betti numbers of the recovered manifolds are 5, 15 and 25 respectively. Thus we have
 \begin{equation}\label{equation:7}
\beta^1(M((2+2t)P,\widetilde{\delta^t}))=5+10t,
\end{equation}
for each $t \in \mathbb{Z}_{+}$, where $t$ is the times of repeating the last two bricks of $\widetilde{\delta^0}$.

By (\ref{equation:0}), (\ref{equation:6}) and (\ref{equation:7}) we finish the proof of Lemma \ref{lemma: fifth}. And we arrange all the Betti numbers of Lemma \ref{lemma: fifth} together in Table  \ref{table:21}.

\begin{table}[H]\footnotesize
	\setlength{\tabcolsep}{2pt}
	\begin{tabular}{cccccc}
		
		\hline
		$\beta^1(M(2P,\delta))=5$&$\beta^1(M(4P,\delta_1^{(0,0)}))=7$& $\beta^1(M(6P,\delta_1^{(0,1)}))=9$   &$\beta^1(M(8P,\delta_1^{(0,2)}))=11$& $\beta^1(M(10P,\delta_1^{(0,3)}))=13$ &...\\
		&
		$\beta^1(M(4P,\delta_2^{(0,0)}))=9$& $\beta^1(M(6P,\delta_2^{(0,1)}))=11$  &$\beta^1(M(8P,\delta_2^{(0,2)}))=13$& $\beta^1(M(10P,\delta_2^{(0,3)}))=15$ &...\\	
		$\beta^1(M(2P,\widetilde{\delta}))=5$&	$\beta^1(M(4P,\delta_3^{(0,0)}))=11$& $\beta^1(M(6P,\delta_3^{(0,1)}))=13$ &$\beta^1(M(8P,\delta_3^{(0,2)}))=15$& $\beta^1(M(10P,\delta_3^{(0,3)}))=17$ &...\\	
		($\widetilde{\delta}=\widetilde{\delta^0}$)&$\beta^1(M(4P,\delta_4^{(0,0)}))=13$& $\beta^1(M(6P,\delta_4^{(0,1)}))=15$ &$\beta^1(M(8P,\delta_4^{(0,2)}))=17$& $\beta^1(M(10P,\delta_4^{(0,3)}))=19$ &...\\
		\hline
		&& $\beta^1(M(6P,\delta_1^{(1,0)}))=17$ &$\beta^1(M(8P,\delta_1^{(1,1)}))=19$& $\beta^1(M(10P,\delta_1^{(1,2)}))=21$ &...\\
		&$\beta^1(M(4P,\widetilde{\delta^1}))=15$& $\beta^1(M(6P,\delta_2^{(1,0)}))=19$ &$\beta^1(M(8P,\delta_2^{(1,1)}))=21$& $\beta^1(M(10P,\delta_2^{(1,2)}))=23$ &...\\	
		&& $\beta^1(M(6P,\delta_3^{(1,0)}))=21$ &$\beta^1(M(8P,\delta_3^{(1,1)}))=23$& $\beta^1(M(10P,\delta_3^{(1,2)}))=25$ &...\\	
		&& $\beta^1(M(6P,\delta_4^{(1,0)}))=23$ &$\beta^1(M(8P,\delta_4^{(1,1)}))=25$& $\beta^1(M(10P,\delta_4^{(1,2)}))=27$ &...\\
		\hline
		&& &$\beta^1(M(8P,\delta_1^{(2,0)}))=27$& $\beta^1(M(10P,\delta_1^{(2,1)}))=29$ &...\\
		&&$\beta^1(M(6P,\widetilde{\delta^2}))=25$ &$\beta^1(M(8P,\delta_2^{(2,0)}))=29$& $\beta^1(M(10P,\delta_2^{(2,1)}))=31$ &...\\	
		&& &$\beta^1(M(8P,\delta_3^{(2,0)}))=31$& $\beta^1(M(10P,\delta_3^{(2,1)}))=33$ &...\\	
		&& &$\beta^1(M(8P,\delta_4^{(2,0)}))=33$& $\beta^1(M(10P,\delta_4^{(2,1)}))=35$ &...\\
		\hline
		&& && $\beta^1(M(10P,\delta_1^{(3,0)}))=37$ &...\\
		&& &$\beta^1(M(8P,\widetilde{\delta^3}))=35$& $\beta^1(M(10P,\delta_2^{(3,0)}))=39$ &...\\	
		&& && $\beta^1(M(10P,\delta_3^{(3,0)}))=41$ &...\\	
		&& && $\beta^1(M(10P,\delta_4^{(3,0)}))=43$ &...\\
		\hline
		&&&&$\beta^1(M(10P,\widetilde{\delta^4}))=45$&...\\
		\hline
	\end{tabular}
	\caption{$\beta^1$ change for Lemma \ref{lemma: fifth}.}
	\label{table:21}
\end{table}
\end{proof}

Now, from Lemma  \ref{lemma: second} to  Lemma \ref{lemma: fifth}, we finish the proof of  Theorem \ref{theorem: bounding} for  an even $n$.

\section{\textbf{Proof of Theorem \ref{theorem: bounding} for $n$ is odd}}

In this section, we prove Theorem \ref{theorem: bounding} for  an odd $n \in \mathbb{Z}_{\geq 1}$, which are similar to the arguments in Section 4.

\begin{lemma}  \label{lemma: odd1}
	For any odd integer $n\in \mathbb{Z}_{>1}$, there is a non-orientable $\mathbb{Z}_2^3$-coloring $\lambda$  over the polytope $nP$, such that for it natural associated $\mathbb{Z}_2^4$-coloring $\delta$, we have   $\beta^1 (M(nP, \delta))= n$.
\end{lemma}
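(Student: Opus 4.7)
The proof should parallel that of Lemma \ref{lemma: key}, with the starting polytope shifted from $2P$ to $3P$ and the target arithmetic progression of Betti numbers now beginning at $3$ with common difference $2$. First I would reuse the three pairwisely compatible $\mathbb{Z}_2^3$-coloring bricks $S_1=(24247)$, $S_2=(65372)$, $S_3=(35716)$ from Lemma \ref{lemma: key}, together with their affix elements $a=1$, $b=1$, $c=4$, and identify a compatible pair $A\in\{S_iS_j\}$ whose self-join also respects the non-singularity condition at the joining vertices. The polytope $3P$ has $5\cdot 3+7=22$ facets arranged as one apex, one $d$-brick, two $h$-bricks, one $u$-brick, and one apex. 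I would then seek a coloring vector of the form $[\,a\ S_1\ S_j\ A\ a\,]$ (four length-$5$ bricks between the two apex colors) that defines a non-orientable $\mathbb{Z}_2^3$-coloring $\lambda$; by Theorem \ref{theorem: NakayamaN} this amounts to demanding that at least one weight-even color from $\{3,5,6\}$ appears in the characteristic vector.

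Next, I would verify that the orientable double cover $M(3P,\delta)$, given by the natural $\mathbb{Z}_2^4$-extension $\delta$ of $\lambda$ from Proposition \ref{remark:1}, satisfies $\beta^1(M(3P,\delta))=3$. This uses Theorem \ref{theorem: ChoiP} together with the CT-tree bookkeeping procedure introduced in the proof of Lemma \ref{lemma: key}: one writes down the $4\times 22$ characteristic matrix $\Delta$, lists the $15$ non-zero elements of $\mathrm{row}\,\Delta$, and counts connected components of each full subcomplex $K_\omega$ via CT-trees on the corresponding minor of the adjacency matrix $X(3P)$. The resulting column sum should equal $3$, analogous to Table \ref{table:14}. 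The candidate coloring must be selected precisely so that these $15$ contributions tally correctly, and this is the main combinatorial effort.

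Finally, I would invoke Proposition \ref{prop:1} to extend to all odd $n>1$. Letting $\delta^{(t)}$ denote the natural $\mathbb{Z}_2^4$-extension of the coloring on $(3+2t)P$ obtained by duplicating the terminal pair $A$ exactly $t$ times, it suffices to confirm that $\beta^1(M(5P,\delta^{(1)}))=5$ and $\beta^1(M(7P,\delta^{(2)}))=7$ by running the same CT-tree algorithm on the enlarged adjacency matrices $X(5P)$ and $X(7P)$. The three values $3,5,7$ then lie in arithmetic progression with common difference $2$, and Proposition \ref{prop:1} propagates the pattern automatically, yielding $\beta^1(M((3+2t)P,\delta^{(t)}))=3+2t=n$ for every odd $n=3+2t>1$.

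The main obstacle is the simultaneous fulfillment of three constraints on the base coloring on $3P$: non-orientability of $\lambda$ (a global property of the characteristic vector), local compatibility at every vertex including the self-joining vertices that appear when $A$ meets its own copy in passing from $nP$ to $(n+2)P$, and the precise target value $\beta^1(M(3P,\delta))=3$ rather than a nearby integer. Once a valid candidate is located via the computer-aided CT-tree search developed in Section 3, the two incremental check values at $n=5,7$ and the arithmetic extrapolation via Proposition \ref{prop:1} follow mechanically, closing the argument.
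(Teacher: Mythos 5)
Your proposal follows essentially the same route as the paper: it reuses the bricks $S_1=(24247)$, $S_2=(65372)$, $S_3=(35716)$ with affixes $a=1$, $b=1$, $c=4$, starts from a base non-orientable coloring on $3P$ of exactly the shape the paper uses (the paper's choice is $[aS_1S_3S_1S_3c]$, giving $\beta^1(M(3P,\delta))=3$ via Theorem \ref{theorem: ChoiP} and the CT-tree computation), and then duplicates the terminal compatible pair and applies the arithmetic-progression argument of Proposition \ref{prop:1} after checking the $5P$ and $7P$ values, exactly as in the paper's Table \ref{table:22}. The only difference is that you leave the explicit base coloring (and its terminal affix, which must lie in the affix set of the final brick) to a computer search rather than exhibiting it, which is an implementation detail rather than a gap.
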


\begin{proof}
	
The  proof simply parallels that of Lemma \ref{lemma: key}. Here we firstly consider the case $n=3$. We keep the notations  $S_1=(24247)$, $S_2=(65372)$, $S_3=(35716)$, $a=1$, $b=1$, $c=4$ and three compatible pairs $S_1S_2=(24247 \  65372)$, $S_1S_3=(24247 \ 35716)$, $S_2S_3=(65372 \ 57163$ as we  set in Lemma \ref{lemma: key}.

 Following the same method  in Lemma \ref{lemma: key}, we firstly construct a non-orientable  $\mathbb{Z}_2^3$-coloring $\lambda$ over the polytope $3P$, whose coloring vector and characteristic vector are
 $$[aS_1S_3S_1S_3c]$$
and
$$(1,~ 2,~ 4,~ 4,~ 2,~ 7,~ 7,~ 1,~ 5,~ 6,~ 3,~ 2,~ 4,~ 4,~ 2,~ 7,~ 7,~ 1,~ 5,~ 6,~ 3,~ 4).$$
So we can figure out $\beta^1(M(3P,\delta))=3$ from Theorem  \ref{theorem: ChoiP}, where $\delta$ is the natural $\mathbb{Z}_2^4$-extension of $\lambda$. 

And then we repeat the last two bricks for $t$ times to construct the expected coloring vector over the polytope $(3+2t)P$. The corresponding characteristic function is denoted by $\lambda^t$. By Theorem  \ref{theorem: NakayamaN} and Proposition \ref{remark:1} we can get its unique admissible extension $\delta^t$, a $\mathbb{Z}_2^4$-coloring characteristic function. The sum of every column of characteristic matrix of $\delta^t$ is 1 mod 2. That is, $M((3+2t)P,\delta^t)$ is the orientable double cover of the non-orientable manifold $M((3+2t)P,\lambda^t)$. Finally we figure out the increasing law of $\beta^1(M((3+2t)P,\delta^t))$ as shown in Table \ref{table:22}.
	
	\begin{table}[H]
		\begin{tabular}{|c|c|c|c|c|c|}
			\hline
			& $n=3$ & $n=5$ & $n=7$ & $\cdots$ & $n=2a+1$,  $a\in \mathbb{Z}_{> 0}$\\
			\hline
			1 & 1 & 1 & 1 & $\cdots$ &1\\
			\hline
			2 & 1 & 1 & 1 & $\cdots$ &1\\
			\hline
			3 & 0 & 0 & 0 & $\cdots$ &0\\
			\hline
			4 & 0 & 1 & 2 & $\cdots$ & $a-1$\\
			\hline
			5 & 0 & 0 & 0 & $\cdots$ &0\\
			\hline
			6 & 0 & 1 & 2 & $\cdots$ &$a-1$\\
			\hline
			7 & 1 & 1 & 1 & $\cdots$ &1\\
			\hline
			8 & 0 & 0 & 0 & $\cdots$ &0\\
			\hline
			9 & 0 & 0 & 0 & $\cdots$ &0\\
			\hline
			10 & 0 & 0 & 0 & $\cdots$ &0\\
			\hline
			11 & 0 & 0 & 0 & $\cdots$ &0\\
			\hline
			12 & 0 & 0 & 0 & $\cdots$ &0\\
			\hline
			13 & 0 & 0 & 0 & $\cdots$ &0\\
			\hline
			14 & 0 & 0 & 0 & $\cdots$ &0\\
			\hline
			15 & 0 & 0 & 0 & $\cdots$ &0\\
			\hline
			total ~$\beta^1$ & 3 & 5 & 7 & $\cdots$ & $2(a-1)+3=n$\\
			\hline
		\end{tabular}
		\caption{$\beta^1(M(nP,\delta^t))$ for $n=3+2t$ in Lemma \ref{lemma: odd1}.}
		\label{table:22}
	\end{table}
	
	By now we finally fulfill the proof of Lemma \ref{lemma: odd1}.
\end{proof}

\begin{lemma}\label{lemma: odd2}
	For any odd number $n\in \mathbb{Z}_{>1}$ and any odd integer $k\in[5n-9,5n+3]$, there is a non-orientable $\mathbb{Z}_2^3$-coloring $\lambda$  over the polytope $nP$, such that for it natural associated  $\mathbb{Z}_2^4$-coloring $\delta$, we have  $\beta^1 (M(nP, \delta))=k$.
\end{lemma}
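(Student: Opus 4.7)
The plan parallels Lemma \ref{lemma: fifth}. For odd $n \geq 3$, the six odd integers in $[5n-9, 5n+3]$ are $5n-8, 5n-6, 5n-4, 5n-2, 5n, 5n+2$. By Proposition \ref{prop:1}, inserting a self-compatible pair of bricks $t$ times into a base coloring on $3P$ adds exactly $10$ to the first Betti number of the associated orientable double cover, so it is enough to produce six base non-orientable $\mathbb{Z}_2^3$-colorings $\lambda_1^{(0)}, \ldots, \lambda_6^{(0)}$ on $3P$ whose natural $\mathbb{Z}_2^4$-extensions realize the six base Betti numbers $\{7, 9, 11, 13, 15, 17\}$, paired with a terminal self-compatible block that can be iterated to reach arbitrary odd $n$.

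Concretely, I would reuse the brick palette $S_1 = (24247)$, $S_2 = (65372)$, $S_3 = (35716)$ with affixes $a = 1$, $b = 1$, $c = 4$ from Lemma \ref{lemma: key}, and search for base coloring vectors of the shape $[a\,S_1\,H_1\,H_2\,*]$ on $3P$, where $H_1, H_2$ are hexagonal bricks chosen so that the pair $H_1H_2$ is compatible with itself. Duplicating $H_1H_2$ a total of $t$ times yields a coloring $\lambda_i^{(t)}$ on $(3+2t)P$ whose characteristic vector still contains an entry in $\{3, 5, 6\}$, hence is non-orientable by Theorem \ref{theorem: NakayamaN} and Remark \ref{remark:2}; Proposition \ref{remark:1} then supplies the unique admissible $\mathbb{Z}_2^4$-extension $\delta_i^{(t)}$. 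To activate Proposition \ref{prop:1}, I would compute $\beta^1(M((3+2t)P, \delta_i^{(t)}))$ at $t = 0, 1, 2$ via the CT-tree procedure of Section 3, displaying the fifteen subcomplex contributions in a $6 \times 3$ table analogous to Tables \ref{table:19} and \ref{table:21}. Once each of the six rows forms an arithmetic progression of common difference $10$, the resulting formula $\beta^1(M(nP, \delta_i^{(t)})) = 2i + 5 + 10t = 5n - 10 + 2i$ for $n = 3 + 2t$ realizes the target set as $i$ runs from $1$ to $6$.

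The main obstacle is the combinatorial one already encountered in Lemma \ref{lemma: fifth}: the non-singularity condition at every shared vertex, the parity condition ensuring non-orientability, and the self-compatibility of the terminal pair together rule out many candidate colorings. If no single terminal pair $H_1H_2$ hits all six base targets when the earlier bricks are varied, I expect to split the six base colorings into two or three subfamilies, each with its own self-compatible terminal pair --- in the spirit of the $\delta_i^{(t_1, t_2)}$ and $\widetilde{\delta^t}$ splitting of Lemma \ref{lemma: fifth} --- and verify the arithmetic progression on each subfamily separately. The existence of enough base colorings is plausible because there are $57507085$ $GL_3(\mathbb{Z}_2)$-equivalence classes of $\mathbb{Z}_2^3$-colorings on $3P$ (Section 2.6), the overwhelming majority of which are non-orientable; producing the explicit coloring vectors, verifying self-compatibility of the repeated block, and tabulating the CT-tree outputs at three successive odd values of $n$ is the mechanical crux that finishes the proof.
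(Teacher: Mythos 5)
Your proposal follows essentially the same route as the paper: the paper's proof also reduces to six base non-orientable colorings on $3P$ whose natural $\mathbb{Z}_2^4$-extensions realize $\beta^1 = 7, 9, 11, 13, 15, 17$, then repeats a terminal compatible brick pair $t$ times and verifies via CT-tree computations at three successive odd values that each family's Betti numbers form an arithmetic progression with common difference $10$, yielding $5n-8,\dots,5n+2$ for $n = 3+2t$. As you anticipated might be necessary, the paper indeed splits the six base colorings into two trios with different brick palettes (one built from $S_1=(24247)$, $S_2=(65372)$, $S_3=(35716)$ for the values $7,9,11$, and a second palette starting from $(34246)$, $(26513)$, etc.\ for $13,15,17$), so apart from the explicit coloring vectors (and the minor bookkeeping that a base coloring of $3P$ needs four bricks between the two affixes), your plan matches the paper's argument.
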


\begin{proof}
	Similar to Lemma \ref{lemma: third}, we start at $n=3$ and construct six suitable characteristic vectors whose corresponding manifolds' Betti numbers would add up by $10t$ when repeating the last pair of coloring bricks for $t$ times.
	
	First of all we claim some bricks and affixes in Table  \ref{table:23}. Those bricks are useful in constructing coloring vectors of the expected $\mathbb{Z}_2^3$-coloring characteristic function $\lambda_i^t$.
	
	\begin{table}[H]
		\begin{tabular}{|c|c|c|c|c|c|}
			\hline
			$a\in\mathcal{A}(S_1)$ &brick $S_1$& $b\in\mathcal{A}(S_2)$& brick $S_2$&brick $S_3$ &compatible pair that being repeated\\
			\hline
			1 &24247 &1&65372&35716&(53726 71635)=$S_2S_3\triangleq A_1$ \\
			&&&&&(24724 37265)=$S_1S_2\triangleq A_2$ \\
			&&&&&(53726 74242)=$S_2S_1\triangleq A_3$ \\	
			\hline
		\end{tabular}
		\caption{Bricks and affixes for Lemma \ref{lemma: odd2}.  (1) }
		\label{table:23}
	\end{table}
	
	For every $i=1, 2, 3$, let $\lambda_i^0$, $\lambda_i^1$ and $\lambda_i^2$ be the three $\mathbb{Z}_2^3$-coloring characteristic functions of the three colorings over the polytopes  $3P$, $5P$ and $7P$ respectively as shown in Table \ref{table:24}. Here $t$ represents how many times the last compatible pair of $\lambda_i^0$ has been repeated.

	\begin{table}[H]
		\begin{tabular}{|c|c|c|c|}
			\hline
			\diagbox{$i$}{$t$}& 0&1& 2\\
			\hline
			0 & $[aA_1A_2a]$ & $[aA_1A_2A_2a]$ &$[aA_1A_2A_2A_2a]$\\
			\hline
			1 & $[aA_1A_3a]$& $[aA_1A_3A_3a]$ &$[aA_1A_3A_3A_3a]$\\
			\hline
			2 & $[aA_1A_1a]$& $[aA_1A_1A_1a]$ &$[aA_1A_1A_1A_1a]$\\
			\hline
		\end{tabular}
		\caption{Coloring vectors of $\lambda_i^t$ in Lemma \ref{lemma: odd2}.}
		\label{table:24}
	\end{table}

 Let $\delta_{i}^t$ be the natural $\mathbb{Z}_2^4$-extensions of $\lambda_{i}^t$, for $i=1, 2, 3$.
 Then by calculation we have the first Betti number  of the recovered manifold of $M((3+2t)P, \delta_{i}^t)$ is $5+2i+10t$, for $t=0$, $1$  and $2$. Thus
\begin{equation}\label{equation:21}
\beta^1(M((3+2t)P,\delta_1^t))=7+10t,
\end{equation}
\begin{equation}\label{equation:22}
\beta^1(M((3+2t)P,\delta_2^t))=9+10t,
\end{equation}
\begin{equation}\label{equation:23}
\beta^1(M((3+2t)P,\delta_3^t))=11+10t,
\end{equation} 
for each $t\in \mathbb{Z}_{\geqslant0}$.

Again, we construct some bricks and affixes as shown in Table \ref{table:25}. Those items are useful in building up the  coloring vectors of the expected   characteristic function $\widetilde{\lambda_i^t}$.

	\begin{table}[H]
		\begin{tabular}{|c|c|c|c|c|c|c|c|c|}
			\hline
			brick $S_1$& brick $S_2$ & brick $S_3$ &brick $S_4$& $d\in\mathcal{A}(S_4)$ &brick $S_5$ & $e\in\mathcal{A}(S_5)$ &brick $S_6$ &$f\in\mathcal{A}(S_6)$\\
			\hline
			34246 &26513 &31245 &26416 &3&16416&3&46452&3 \\
			\hline
			\multicolumn{9}{|c|}{compatible pair that being repeated}\\
			\hline
			\multicolumn{9}{|c|}{(34246 26513)=$S_1S_2\triangleq A_0$}\\
			\multicolumn{9}{|c|}{(31245 26416)=$S_3S_4\triangleq A_1$}\\
			\multicolumn{9}{|c|}{(31245 16416)=$S_3S_5\triangleq A_2$}\\
			\multicolumn{9}{|c|}{(31245 46452)=$S_3S_6\triangleq A_3$}\\			
			\hline
		\end{tabular}
		\caption{Bricks and affixes for Lemma \ref{lemma: odd2}. (2)}
		\label{table:25}
	\end{table}

	For every $i=1, 2, 3$, let $\widetilde{\lambda_i^0}$, $\widetilde{\lambda_i^1}$ and $\widetilde{\lambda_i^2}$ be the three $\mathbb{Z}_2^3$-coloring characteristic functions of the three colorings over the polytopes $3P$, $5P$ and $7P$ respectively as shown in Table \ref{table:26}. Here $t$ represents how many times the last compatible pair of $\widetilde{\lambda_i^0}$ has been repeated.
	
	\begin{table}[H]
		\begin{tabular}{|c|c|c|c|}
			\hline
			\diagbox{$i$}{$t$}&0&1&2\\
			\hline
			0 & $[aA_0A_1d]$& $[aA_0A_1A_1d]$ &$[aA_0A_1A_1A_1d]$\\
			\hline
			1 & $[aA_0A_2e]$& $[aA_0A_2A_2e]$ &$[aA_0A_2A_2A_2e]$\\
			\hline
			2 & $[aA_0A_3f]$& $[aA_0A_3A_3f]$ &$[aA_0A_3A_3A_3f]$\\
			\hline
		\end{tabular}
		\caption{Coloring vectors of $\widetilde{\lambda_i^t}$  in Lemma \ref{lemma: odd2}.}\label{table:26}
	\end{table}

Let $\widetilde{\delta_{i}^t}$ be the natural $\mathbb{Z}_2^4$-extensions of $\widetilde{\lambda_{i}^t}$, for $i=1, 2, 3$.
 Then by calculation we have the first Betti number of the recovered manifold of $\widetilde{\delta_i^t}$ is $11+2i+10t$, for $t=0$, $1$  and $2$. Thus
 \begin{equation}\label{equation:24}
 \beta^1(M((3+2t)P,\widetilde{\delta_1^t}))=13+10t,
 \end{equation}
 \begin{equation}\label{equation:25}
 \beta^1(M((3+2t)P,\widetilde{\delta_2^t}))=15+10t,
 \end{equation}
 \begin{equation}\label{equation:26}
 \beta^1(M((3+2t)P,\widetilde{\delta_3^t}))=17+10t,
 \end{equation} 
for each  $t\in \mathbb{Z}_{+}$.
	
Thus by (\ref{equation:21}), (\ref{equation:22}), (\ref{equation:23}), (\ref{equation:24}), (\ref{equation:25}) and (\ref{equation:26}), namely gathering these six special families of characteristic functions, we fulfill the proof of Lemma \ref{lemma: odd2}.
	
\end{proof}

\begin{lemma} \label{lemma: odd3}
	For any odd integer $n\in\mathbb{Z}_{>1}$ and any odd integer $k \in[1,n-1]$, there is a non-orientable $\mathbb{Z}_2^3$-coloring $\lambda$  over the polytope $nP$, such that for its natural associated  $\mathbb{Z}_2^4$-coloring $\delta$, we have $\beta^1 (M(nP,\delta)) = k$.
\end{lemma}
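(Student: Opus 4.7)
The plan is to mirror the argument of Lemma \ref{lemma: fouth}, adjusted for the case of odd $n$ by starting from the polytope $3P$ (which consists of four bricks) instead of $2P$, and building a two-parameter family of non-orientable $\mathbb{Z}_2^3$-colorings $\lambda^{(t_1, t_2)}$ over $(3 + 2(t_1 + t_2))P$, with $\beta^1$ descending by $2$ as one trades a copy of a ``$+2$-compatible'' pair for a ``$0$-compatible'' pair.

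Concretely, I would first fix coloring bricks $S_1, S_2, S_3$ together with auxiliary bricks $S_4, S_5$ and affix elements $a, b$ so that the base coloring vector $[a\, S_1\, S_2\, S_3\, b]$ on $3P$ yields a non-orientable $\lambda^{(0,0)}$ whose natural $\mathbb{Z}_2^4$-extension $\delta^{(0,0)}$ satisfies $\beta^1(M(3P,\delta^{(0,0)})) = 1$. Next, I would isolate two compatible pairs $A_1$ and $A_2$ of bricks (analogous to those of Table \ref{table:18}), which are pairwise compatible with each other and with both $S_3$ and $b$, and set $\lambda^{(t_1, t_2)}$ to be the $\mathbb{Z}_2^3$-coloring on $(3 + 2(t_1 + t_2))P$ with coloring vector $[a\, S_1\, S_2\, S_3\, A_1^{t_1}\, A_2^{t_2}\, b]$. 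Non-orientability of each $\lambda^{(t_1,t_2)}$ will be confirmed via Theorem \ref{theorem: NakayamaN}, and its natural $\mathbb{Z}_2^4$-extension $\delta^{(t_1,t_2)}$ is produced by Proposition \ref{remark:1}.

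The key computational step is to verify the five base values
\begin{align*}
\beta^1(M(3P, \delta^{(0,0)})) &= 1,\\
\beta^1(M(5P, \delta^{(1,0)})) &= 1, \qquad \beta^1(M(7P, \delta^{(2,0)})) = 1,\\
\beta^1(M(5P, \delta^{(0,1)})) &= 3, \qquad \beta^1(M(7P, \delta^{(0,2)})) = 5,
\end{align*}
via Theorem \ref{theorem: ChoiP} and the CT-tree procedure of Section 3. Since the adjacency matrices $X((3 + 2s)P)$ and the characteristic vectors for $\delta^{(t_1, t_2)}$ both change in the uniform way required, Proposition \ref{prop:1} (whose statement and proof apply verbatim to odd $n$) upgrades these samples to the two recursions
\begin{align*}
\beta^1(M((3 + 2(t_1 + t_2 + 1))P, \delta^{(t_1 + 1, t_2)})) &= \beta^1(M((3 + 2(t_1 + t_2))P, \delta^{(t_1, t_2)})),\\
\beta^1(M((3 + 2(t_1 + t_2 + 1))P, \delta^{(t_1, t_2 + 1)})) &= \beta^1(M((3 + 2(t_1 + t_2))P, \delta^{(t_1, t_2)})) + 2.
\end{align*}
Iterating yields $\beta^1(M(nP, \delta^{(t, (n-3)/2 - t)})) = n - 2 - 2t$ for $0 \leq t \leq (n-3)/2$, which runs through exactly the odd integers $\{1, 3, \ldots, n-2\} = [1, n-1] \cap (2\mathbb{Z}+1)$, as required.

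The main obstacle will be exhibiting explicit bricks that realize all five base Betti values simultaneously while guaranteeing that $A_1$ and $A_2$ can be concatenated in any order (so that arbitrary strings $A_1^{t_1} A_2^{t_2}$ insert compatibly at every interface). This is a finite combinatorial search, entirely parallel to the one solved in Lemma \ref{lemma: fouth}; once the bricks are found, the CT-tree computations and the arithmetic-progression argument require no new ingredients.
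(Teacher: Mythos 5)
Your proposal follows essentially the same route as the paper's proof: the paper likewise builds a two-parameter family $\lambda^{(t_1,t_2)}$ over $(2(t_1+t_2)+3)P$ from explicit bricks (there the repeated pair $A_2$ leaves $\beta^1$ unchanged while $A_3$ raises it by $2$), verifies the same five base values $1,1,1,3,5$ via Theorem \ref{theorem: ChoiP} and the CT-tree procedure, and invokes the arithmetic-progression regularity of Proposition \ref{prop:1} to conclude $\beta^1(M(nP,\delta^{(t,\frac{n-3}{2}-t)}))=n-2-2t$, covering all odd $k\in[1,n-1]$. One bookkeeping slip: a coloring of $3P$ consists of four bricks (one $d$-, two $h$-, one $u$-brick), so your base vector must contain four brick entries, as in the paper's $[aA_1A_2b]$, rather than the three in $[a\,S_1\,S_2\,S_3\,b]$.
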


\begin{proof}
	
	Using the notations of $\{S_i\}_{i=1}^4$, $\{A_i\}_{i=1}^3$ and affixes $a$, $b$ as  defined in Table \ref{table:27}.
	
	\begin{table}[H]
		\begin{tabular}{|c|c|c|c|c|c|c|}
			\hline
			$a\in\mathcal{A}(S_1)$ &brick $S_1$& brick $S_2$& $b\in\mathcal{A}(S_2)$&brick $S_3$ &brick $S_4$&compatible pair that being repeated\\
			\hline
			1 &24247 &17532&4&53176&53147&(24247 17532)=$S_1S_2\triangleq A_1$ \\	
			& &&&&&(53176 17532)=$S_3S_2\triangleq A_2$ \\	
			& &&&&&(53147 17532)=$S_4S_2\triangleq A_3$ \\	
			\hline
		\end{tabular}
		\caption{Bricks and affixes for $\lambda^{(t_1,t_2)}$ of Lemma \ref{lemma: odd3}.}
		\label{table:27}
	\end{table}
	
	Firstly we seek out a non-orientable $\mathbb{Z}_2^3$-coloring characteristic function $\lambda$ on the polytope $3P$, whose coloring vector is $[aA_1 A_2 b]\triangleq [aAb]$. Its natural $\mathbb{Z}_2^4$-extension  is $\delta$. Let $\lambda^{(t_1,t_2)}$ to represent the $\mathbb{Z}_2^3$-coloring  characteristic function of
$$[aA_1A_2\underbrace{A_2,...,A_2}_{t_1}\underbrace{A_3,...,A_3}_{t_2}b]\triangleq A+t_1A_2+t_2A_3,$$
which is over the polytope  $(2(t_1+t_2)+3))P$.
Moreover $\delta^{(t_1,t_2)}$ is the natural associated $\mathbb{Z}_2^4$-extension of $\lambda^{(t_1,t_2)}$ over the polytope  $(2(t_1+t_2)+3))P$. Specially  $\lambda^{(0,0)}=\lambda$. It is necessary to make sure the compatibility of $S_2$ and $S_3$ if we want to glue $A_2$ after $A_3$. And here everything has checked to be fine for the compatibility naturally lies in the pair $A_2$. Thus we have Betti numbers as shown in Table \ref{table:28}.
	
	\begin{table}[H]
		\begin{tabular}{|c|c|c|c|}
			\hline
			$\beta^1(M(3P,\delta^{(0,0)}))=1$ & $\beta^1(M(5P,\delta^{(1,0)}))=1$&$\beta^1(M(7P,\delta^{(2,0)}))=1$&...\\
			&$\beta^1(M(5P,\delta^{(0,1)}))=3$ &$\beta^1(M(7P,\delta^{(1,1)}))=3$&... \\	
			& &$\beta^1(M(7P,\delta^{(0,2)}))=5$&...\\	
			\hline
		\end{tabular}
		\caption{$\beta^1$ change of $\delta^{(t_1,t_2)}$ in Lemma \ref{lemma: odd3}.}
		\label{table:28}
	\end{table}
	
	Then by the analysis in Lemma \ref{lemma: key}, the Betti number sequence follows certain pattern as the characteristic vectors and adjacent matrices  change regularly. So if we have the first three numbers lying in an arithmetic progression, then  the Betti number sequence follows an arithmetic progression. 
	
	From $$\beta^1(M(3P,\delta^{(0,0)}))=1,$$
$$\beta^1(M(5P,\delta^{(1,0)}))=1,$$
$$\beta^1(M(7P,\delta^{(2,0)}))=1,$$
we have
\begin{equation}\label{equation:8}
\beta^1(M((2(t_1+t_2)+3)P,\delta^{(t_1,t_2)})) = \beta^1(M((2(t_1 + t_2)+5)P,\delta^{(t_1+1,t_2)})).
\end{equation}

From $$\beta^1(M(3P,\delta^{(0,0)}))=1,$$
$$\beta^1(M(5P,\delta^{(0,1)}))=3,$$
$$\beta^1(M(7P,\delta^{(0,2)}))=5,$$
 we have
\begin{equation}\label{equation:9}
\beta^1(M((2(t_1 + t_2)+3)P,\delta^{(t_1,t_2)}))+2 =\beta^1(M((2(t_1 + t_2)+5)P,\delta^{(t_1,t_2+1)})).
\end{equation}
	
By (\ref{equation:8}) and (\ref{equation:9}) we can obtain
 \begin{equation}\label{equation:10}
\beta^1(M(nP,\delta^{(t,\frac{n-3}{2}-t)}))= n-2-2t,
 \end{equation}
for each  $n$ is odd, $n\in\mathbb{Z}_{\geq 3}$ and $0\leqslant t\leqslant \frac{n-3}{2}$.

	Thus we finish the proof of Lemma \ref{lemma: odd3}.
\end{proof}

\begin{lemma} \label{lemma: odd4}
	For any odd integer $n\in\mathbb{Z}_{>1}$ and any odd integer $k \in[n+1,5n-9]$, there is a non-orientable $\mathbb{Z}_2^3$-coloring $\lambda$ over the polytope $nP$, such that for the natural associated  $\mathbb{Z}_2^4$-coloring $\delta$, we have $\beta^1 (M(nP,\delta)) = k$.
\end{lemma}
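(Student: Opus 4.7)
The plan is to mirror the proof of Lemma \ref{lemma: fifth} almost verbatim, translating every construction from the even-parity setting to the odd-parity setting. Concretely, I will produce a two-parameter family $\lambda_i^{(t_1,t_2)}$ of non-orientable $\mathbb{Z}_2^3$-colorings on $(2(t_1+t_2)+3)P$, together with a supplementary one-parameter family $\widetilde{\lambda^t}$ on $(3+2t)P$, so that every odd integer in $[n+1, 5n-9]$ appears as $\beta^{1}(M(nP,\delta))$ for the natural $\mathbb{Z}_2^4$-extension $\delta$.

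First I would fix pentagon bricks $S_1,S_2,S_3,\ldots$ together with affix elements $a,b,c,\ldots$ and produce four pairwise compatible hexagonal compatible pairs $A_1,A_2,A_3,A_4$, analogous to the four used in Lemma \ref{lemma: fifth} but adjusted so that the two pentagonal caps fit the three-brick base polytope $3P$ rather than the two-brick base polytope $2P$. Next, for each $i\in\{1,2,3,4\}$, I define $\lambda_i^{(t_1,t_2)}$ by taking a fixed opening segment $[aS_1]$, inserting $t_1$ copies of one hexagonal pair (chosen so that each insertion contributes $+10$ to $\beta^{1}$), then inserting $t_2$ copies of a second pair (chosen so that each insertion contributes $+2$ to $\beta^{1}$), then appending a short closing segment depending on $i$ that shifts the base Betti number through four consecutive odd values $n+2, n+4, n+6, n+8$ or the analogous block. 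I compute the base Betti numbers on $5P$ and $7P$ via the CT-tree procedure from Section 3, then invoke Proposition \ref{prop:1} to confirm that each family forms an arithmetic progression in $t_1$ and $t_2$ with the expected increments. The supplementary family $\widetilde{\lambda^t}$, built by repeating a single compatible pair starting from a base coloring on $3P$, supplies the residual values obtained when $t_1=(n-3)/2-t_2$ saturates, exactly as $\widetilde{\delta^t}$ does in Lemma \ref{lemma: fifth}.

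With the four families $\delta_i^{(t_1,t_2)}$ and the supplementary family $\widetilde{\delta^t}$ in hand, I verify by direct arithmetic that their union covers every odd $k\in[n+1,5n-9]$: the four values $n+2,n+4,n+6,n+8$ shifted by $8t_1+2t_2\in 2\mathbb{Z}_{\geq 0}$, together with the values $5+10t$ from $\widetilde{\delta^t}$, exhaust all odd residues modulo $10$ inside the required interval once $n\geq 3$ is odd. Non-orientability of each $\lambda$ is immediate from Theorem \ref{theorem: NakayamaN} because every characteristic vector contains at least one element of $\{3,5,6\}\subset\mathbb{Z}_2^3$, and existence and uniqueness of the natural $\mathbb{Z}_2^4$-extension $\delta$ follow from Proposition \ref{remark:1}.

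The main obstacle is the brick selection itself: I need at least four hexagonal compatible pairs that can be concatenated in either order without violating the non-singularity condition at any of the ten shared vertices on each hexagonal layer, and I need the base coloring to be arranged so that the CT-tree computation on the $15$ non-zero rows of $\operatorname{row}\Delta$ yields the exact increments $(+10,+2)$ in $(t_1,t_2)$ rather than drift. This is the same combinatorial hurdle encountered in Lemma \ref{lemma: fifth}, but now one must recheck compatibility because the parity of the hexagonal layer count is opposite, which affects which rotations of each brick align across a gluing. Once the bricks are locked down, the remaining verification is pure bookkeeping on CT-trees, identical in structure to the tables displayed for Lemma \ref{lemma: fifth}.
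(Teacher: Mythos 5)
Your proposal follows essentially the same route as the paper's proof: there the bricks $S_1=(24247)$, $S_2=(65372)$, $S_3=(35716)$ and affixes from Lemma \ref{lemma: key} are reused to form four compatible pairs, and the odd values in $[n+1,5n-9]$ are covered by a two-parameter family $[aA_1A_3^{t_1}A_4A_1^{t_2}A_i*]$ with increments $+10$ and $+2$, a low-end family $[bA_4A_1^{t}A_i*]$ supplying $n+2,\dots,n+8$, and a top-end family $[aA_1A_3^{t}a]$ with $\beta^1=10t-5$, which is exactly the architecture you describe. The constructive step you defer --- fixing compatible bricks for the odd-parity base $3P$ and verifying the base Betti numbers and the $(+10,+2)$ increments via CT-trees and Proposition \ref{prop:1} --- is handled in the paper precisely as you anticipate, by explicit calculation with the same bricks as in Lemmas \ref{lemma: key} and \ref{lemma: fifth}.
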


\begin{proof}

	Keeping the notations of $S_1=(24247)$, $S_2=(65372)$, $S_3 =(35716)$ and affixes $a=1$, $b=1$,  $c=4$ that we used in Lemma \ref{lemma: key}. Four compatible pairs which are useful in this proof are arranged in Table \ref{table:29}.
	
	\begin{table}[H]
		\begin{tabular}{|c|c|c|c|c|c|}
			\hline
			$a\in\mathcal{A}(S_1)$ &brick $S_1$& $b\in\mathcal{A}(S_2)$ &brick $S_2$& $c\in\mathcal{A}(S_3)$&brick $S_3$ \\
			\hline
			1 &24247 &1&65372&4&35716\\
			\hline
			\multicolumn{6}{|c|}{compatible pair that being repeated}\\
			\hline
			\multicolumn{6}{|c|}{(42472 57163)=$S_1S_3\triangleq A_1$}\\
			\multicolumn{6}{|c|}{(42472 53726)=$S_1S_2\triangleq A_2$}\\
			\multicolumn{6}{|c|}{(65372 72424)=$S_2S_1\triangleq A_3$}\\
			\multicolumn{6}{|c|}{(65372 57163)=$S_2S_3\triangleq A_4$}\\
			\hline
		\end{tabular}
		\caption{Bricks and affixes for Lemma \ref{lemma: odd4}.}
		\label{table:29}
	\end{table}
	Firstly we construct a non-orientable $\mathbb{Z}_2^3$-colorings $\lambda$ over the polytope $3P$, whose coloring vector is $[bS_2 S_3 S_1 S_3c]$. Its natural associated $\mathbb{Z}_2^4$-extension is denoted by $\delta$. By calculation we have
	\begin{equation}\label{equation:11}
	\beta^1(M(3P,\delta))=5.
	\end{equation}
	
	By $\lambda_i^{t-1}$, for each $t\in \mathbb{Z}_{\geq 1}$ and $i=1, 2, 3$, or $4$, we refer to the non-orientable $\mathbb{Z}_2^3$-coloring characteristic function $\lambda$ on the polytope $(2t+3)P$ with coloring vector $$[bA_4\underbrace{A_1,...,A_1}_tA_i*].$$
Here $*$ is the corresponding affix element, which means $*=c, b, a, c$ for $i=1,2,3,4$ respectively. Specially  $\lambda_1^t$ is obtained by inserting $(t+1)$ times $A_1$ into the coloring vector of $\lambda$. Denoted by $\delta_i^{t-1}$ the natural $\mathbb{Z}_2^4$-extension of $\lambda_i^{t-1}$, from
$$\beta^1(M(5P,\delta_i^0))=5+2i,$$
$$\beta^1(M(7P,\delta_i^1))=7+2i,$$
 $$\beta^1(M(9P,\delta^2))=9+2i,$$
 we have
	\begin{equation}\label{equation:12}
	\beta^1(M((2t+3)P,\delta_i^{t-1}))+2 = \beta^1(M((2t+5)P,\delta_i^t))
	\end{equation}
for $i=1,2,3,4$.

	Next, we select three non-orientable $\mathbb{Z}_2^3$-colorings  $\widetilde{\lambda^0}$, $\widetilde{\lambda^1}$ and $\widetilde{\lambda^2}$ on the polytopes $3P$, $5P$ and $7P$, whose coloring vectors are
$$[aA_1A_3a],$$
$$[aA_1A_3A_3a]\triangleq[a1A_3a]+A_3,$$
$$[aA_1A_3A_3A_3a]\triangleq[a1A_3a]+2\cdot A_3$$ 
respectively. And the natural $\mathbb{Z}_2^4$-extensions are denoted as $\widetilde{\delta^0}$,  $\widetilde{\delta^1}$ and $\widetilde{\delta^2}$. By calculating we have $$\beta^1(M(3P,\widetilde{\delta^0}))=5,$$
 $$\beta^1(M(5P,\widetilde{\delta^1}))=15,$$
 $$\beta^1(M(7P,\widetilde{\delta^2}))=25.$$
Now for each $t\in \mathbb{Z}_{\geq 1}$, denoting $\widetilde{\lambda^{t-1}}$ as the  $\mathbb{Z}_2^3$-coloring characteristic function of $[aA_1\underbrace{A_3,...,A_3}_ta]$ over the polytope $(2t+1)P$ and $\widetilde{\delta^{t-1}}$ as its  natural $\mathbb{Z}_2^4$-extension. Then we have
	\begin{equation}
	\label{equation:13}
	\beta^1(M((2t+1)P,\widetilde{\delta^{t-1}}))=10t-5,
	\end{equation}
for each $t\in \mathbb{Z}_{\geq 1}$.

	Now using $\lambda_i^{(t_1-1, t_2)}$ to represent the $\mathbb{Z}_2^3$-coloring  characteristic function of the coloring vector $$[aA_1\underbrace{A_3,...,A_3}_{t_1}A_4\underbrace{A_1, ..., A_1}_{t_2}A_i*]\triangleq [aA_1]+t_1A_3+A_4+t_2A_1+[A_i*]$$
over the polytope $(2(t_1+t_2)+5)P$.
 In particular, the coloring vector of $\lambda_i^{(0, 0)}$ is $[aA_1A_3A_4A_i*]$. And $\delta_i^{(t_1-1, t_2)}$ is the natural $\mathbb{Z}_2^4$-extension of $\lambda_i^{(t_1-1, t_2)}$ over the polytope $(2(t_1+t_2)+5)P$.
	
From $$\beta^1(M(7P,\delta_i^{(0, 0)}))=5+2i,$$
 $$\beta^1(M(9P,\delta_i^{(0, 1)}))=7+2i,$$
$$\beta^1(M(11P, \delta_i^{(0, 2)}))=9+2i,$$
for $i=1, 2, 3, 4$,
we have
\begin{equation}
\label{equation:14}
\beta^1(M((2(t_1 + t_2)+5)P, \delta^{(t_1-1, t_2)}))+2 =\beta^1(M((2(t_1 + t_2)+7)P, \delta^{(t_1-1, t_2+1)})),
\end{equation}	
for each $t\in \mathbb{Z}_{\geq 1}$.

 From $$\beta^1(M(7P, \delta_i^{(0,0)}))=5+2i,$$
$$\beta^1(M(9P,\delta_i^{(1,0)}))=15+2i,$$
$$\beta^1(M(11P,\delta_i^{(2,0)}))=25+2i,$$
for $i=1, 2, 3, 4$,
we have
 \begin{equation}
 \label{equation:15}
 \beta^1(M((2(t_1 + t_2)+5)P, \delta^{(t_1-1, t_2)}_i))+10 =\beta^1(M((2(t_1 + t_2)+7)P, \delta^{(t_1, t_2)}_i)),
 \end{equation}
for each $t\in \mathbb{Z}_{\geq 1}$.

By (\ref{equation:14}) and (\ref{equation:15}),  we have
\begin{equation}
\label{equation:16}
 \beta^1(M(nP,\delta^{(t, \frac{n-1}{2}-3-t)}_i))=n+2i+8t,
\end{equation}
for each $n$ is odd, $n\in\mathbb{Z}_{\geq 7}$ and  $0\leqslant t\leqslant \frac{n-7}{2}$.
	
	Thus from (\ref{equation:11}), (\ref{equation:12}), (\ref{equation:13}) and (\ref{equation:16}) we finish the proof of Lemma \ref{lemma: odd4}. All the Betti numbers of Lemma \ref{lemma: odd4} have been arrange together in Table  \ref{table:30}.
	
	\begin{table}[H]
		\footnotesize
		\setlength{\tabcolsep}{7pt}
		\begin{tabular}{cccccc}
			\hline
			$\beta^1(M(3P,\widetilde{\delta}))=5$&
			$\beta^1(M(5P,\delta_1^0))=7$& $\beta^1(M(7P,\delta_1^1))=9$   &$\beta^1(M(9P,\delta_1^2))=11$& $\beta^1(M(11P,\delta_1^3))=13$ &...\\
			$\beta^1(M(3P,\widetilde{\delta^0}))=5$&
			$\beta^1(M(5P,\delta_2^0))=9$& $\beta^1(M(7P,\delta_2^1))=11$  &$\beta^1(M(9P,\delta_2^2))=13$& $\beta^1(M(11P,\delta_2^3))=15$ &...\\	
			&$\beta^1(M(5P,\delta_3^0))=11$& $\beta^1(M(7P,\delta_3^1))=13$ &$\beta^1(M(9P,\delta_3^2))=15$& $\beta^1(M(11P,\delta_3^3))=17$ &...\\	
			&$\beta^1(M(5P,\delta_4^0))=13$& $\beta^1(M(7P,\delta_4^1))=15$ &$\beta^1(M(9P,\delta_4^2))=17$& $\beta^1(M(11P,\delta_4^3))=19$ &...\\
			\hline
			&& $\beta^1(M(7P, \delta_1^{(0, 0)}))=17$ &$\beta^1(M(9P,\delta_1^{(0,1)}))=19$& $\beta^1(M(11P,\delta_1^{(0,2)}))=21$ &...\\
			&$\beta^1(M(5P, \widetilde{\delta^1}))=15$	& $\beta^1(M(7P,\delta_2^{(0,0)}))=19$ &$\beta^1(M(9P,\delta_2^{(0,1)}))=21$& $\beta^1(M(11P,\delta_2^{(0, 2)}))=23$ &...\\	
			&& $\beta^1(M(7P, \delta_3^{(0,0)}))=21$ &$\beta^1(M(9P,\delta_3^{(0,1)}))=23$& $\beta^1(M(11P,\delta_3^{(0,2)}))=25$ &...\\	
			&& $\beta^1(M(7P,\delta_4^{(0,0)}))=23$ &$\beta^1(M(9P,\delta_4^{(0,1)}))=25$& $\beta^1(M(11P,\delta_4^{(0,2)}))=27$ &...\\
			\hline
			&&		 &$\beta^1(M(9P,\delta_1^{(1,0)}))=27$& $\beta^1(M(11P,\delta_1^{(1,1)}))=29$ &...\\
			&&$\beta^1(M(7P,\widetilde{\delta^2}))=25$&$\beta^1(M(9P,\delta_2^{(1,0)}))=29$& $\beta^1(M(11P,\delta_2^{(1,1)}))=31$ &...\\	
			&& &$\beta^1(M(9P,\delta_3^{(1,0)}))=31$& $\beta^1(M(11P,\delta_3^{(1,1)}))=33$ &...\\	
			&& &$\beta^1(M(9P,\delta_4^{(1,0)}))=33$& $\beta^1(M(11P,\delta_4^{(1,1)}))=35$ &...\\
			\hline
			&& && $\beta^1(M(11P,\delta_1^{(2,0)}))=37$ &...\\
			&& &$\beta^1(M(9P, \widetilde{\delta^3}))=35$& $\beta^1(M(11P,\delta_2^{(2,0)}))=39$ &...\\	
			&& && $\beta^1(M(11P, \delta_3^{(2,0)}))=41$ &...\\	
			&& && $\beta^1(M(11P, \delta_4^{(2,0)}))=43$ &...\\
			\hline
			&&&&&\\
			&& &&$\beta^1(M(11P,\widetilde{\delta^4}))=45$&...\\
			\hline
		\end{tabular}
		\caption{$\beta^1$ change for Lemma \ref{lemma: odd4}.}
		\label{table:30}
	\end{table}
\end{proof}

\begin{lemma}\label{lemma:odd5}
	For $k$ an odd integer and $k\in[1, 7]$, there is a non-orientable $\mathbb{Z}_2^3$-coloring over the dodecahedron $P$, such that for its natural associated $\mathbb{Z}_2^4$-coloring $\delta$,  we have $\beta^1(M(P,\delta))=k$.
\end{lemma}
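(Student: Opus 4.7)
The plan is to prove Lemma \ref{lemma:odd5} by direct construction for each of the four odd values $k \in \{1,3,5,7\}$. Because $1P = P$ has no hexagonal layer, every $\mathbb{Z}_2^3$-coloring of $P$ has the form $[aS_1S_2b]$ where $S_1$ is a downward pentagon brick, $S_2$ is an upward pentagon brick, and $a \in \mathcal{A}(S_1)$, $b \in \mathcal{A}(S_2)$ are affix colors. The case $k=7$ is already supplied by Example \ref{example:6}, where the characteristic vector $(1,2,4,5,3,7,7,3,5,4,2,1)$ realizes $\beta^1(M(P,\delta))=7$. For the remaining values $k \in \{1,3,5\}$, I would exhibit one explicit coloring vector in each case, verify non-orientability, form the unique admissible $\mathbb{Z}_2^4$-extension via Proposition \ref{remark:1}, and then compute the first Betti number by the CT-tree method.

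First I would search the set of non-orientable $GL_3(\mathbb{Z}_2)$-classes on $P$, which was shown to have cardinality $2155$ at the end of Section 2.6, and isolate at least one representative realizing each target Betti number. The candidate pool is naturally drawn from rotations and compatible pairings of pentagon bricks already used in earlier lemmas, for example $(2\,4\,2\,4\,7)$, $(6\,5\,3\,7\,2)$, $(3\,5\,7\,1\,6)$, along with bricks from Lemma \ref{lemma: odd1}, constrained so that the non-singularity condition holds at every vertex. Non-orientability is then detected at the level of the characteristic matrix: by Theorem \ref{theorem: NakayamaN} and Remark \ref{remark:2}, it suffices that no $GL_3(\mathbb{Z}_2)$ change of basis converts every column sum to $1\bmod 2$, equivalently that at least one column continues to evaluate to an element of $\{3,5,6\}$ under binary encoding after normalisation. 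Proposition \ref{remark:1} then produces the unique $\mathbb{Z}_2^4$-coloring $\delta$ whose column sums are all $1\bmod 2$.

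Computing $\beta^1(M(P,\delta))$ reduces, via Theorem \ref{theorem: ChoiP}, to summing $\widetilde{\beta}^0(K_\omega)$ over the $15$ non-zero elements of $\mathrm{row}\,\Delta$, where each full subcomplex $K_\omega$ is read directly off the $12$-vertex adjacency graph of $(\partial P)^*$ pictured in Figure \ref{figure:colored P}. Since $P$ has only $12$ facets, the CT-tree tabulation for each $\omega$ is short and each Betti contribution can be collected in a table analogous to Table \ref{table:1}. Producing four such tables, one per value of $k$, finishes the proof.

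The main obstacle is the absence of any brick-repetition mechanism: in Sections 4 and 5 the lemmas for $n\geq 2$ exploited the arithmetic-progression structure of Proposition \ref{prop:1} to generate infinitely many Betti numbers from a handful of seed colorings, but $P$ has no hexagonal layer to duplicate, so each of the values $k=1,3,5$ must be witnessed by an independently chosen non-orientable coloring. The argument is therefore genuinely case-by-case, and a short computer-assisted scan over the $2155$ non-orientable classes, followed by hand-computable CT-tree verification on the four survivors, is the cleanest route to exhibiting the required representatives and their Betti tables.
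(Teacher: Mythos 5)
Your proposal follows essentially the same route as the paper: the paper's proof of Lemma \ref{lemma:odd5} is nothing more than exhibiting one explicit non-orientable coloring per value of $k$ (Table \ref{table:31}, e.g.\ $(1\ 24427\ 17756\ 4)$ for $k=1$), with orientability and $\beta^1$ checked exactly by the Nakayama--Nishimura criterion, Proposition \ref{remark:1}, and the Choi--Park/CT-tree computation you describe. The only difference is that you defer the explicit witnesses for $k=1,3,5$ to a finite scan over the $2155$ non-orientable classes instead of listing them (and reuse Example \ref{example:6} for $k=7$); to make the proof complete you would just record the four coloring vectors found, which is precisely what the paper does.
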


\begin{proof}
	We only need to give out the satisfied characteristic functions to accomplish this lemma, which is given in Table \ref{table:31}.
	\begin{table}[H]
		\small
		\begin{tabular}{|c|c|c|}
			\hline
			& $\lambda$ & $\beta^1(M(P,\delta))$\\
			\hline
			1 & (1, 2, 4, 4, 2, 7, 1, 7, 7, 5, 6, 4) & 1 \\
			\hline
			2 & (1, 2, 4, 4, 2, 7, 7, 3, 1, 5, 4, 2) & 3 \\
			\hline
			3 & (1, 2, 4, 4, 2, 7, 3, 5, 5, 6, 3, 1) & 5 \\
			\hline
			4 & (1, 2, 4, 5, 2, 6, 3, 6, 5, 4, 3, 1) & 7\\
			\hline
		\end{tabular}
		\caption{$\mathbb{Z}_{2}^{3}$-colorings and $\beta^1$ of their natural  $\mathbb{Z}_{2}^{4}$-extensions of Lemma \ref{lemma:odd5}.}
		\label{table:31}
	\end{table}
\end{proof}

Now, from Lemma  \ref{lemma: odd1} to  Lemma \ref{lemma:odd5}, we finish the proof of  Theorem \ref{theorem: bounding} for  an odd $n$, thus together with Section 4, we finish the proof of  Theorem \ref{theorem: bounding}.
\bibliographystyle{amsplain}

\end{document}